\title[Fixed-point elimination in the \Ipc]{
  Fixed-point elimination \\ in the \capIPC \\
  (extended version)
} 
\thanks{This is an extended version of the conference paper \cite{fossacs}}
\author[Ghilardi]{Silvio Ghilardi}
\address{
  Silvio Ghilardi,
  Dipartimento di Matematica, Universit\`a
    degli Studi di Milano
  }
\email{silvio.ghilardi@unimi.it}
\author[Gouveia]{Maria Jo\~{a}o Gouveia} 
\address{
  Maria Jo\~{a}o Gouveia,
  Faculdade de Ci\^encias da Universidade de Lisboa,
  Portugal
}
\email{mjgouveia@fc.ul.pt}
\author[Santocanale]{Luigi Santocanale}
\address{
  Luigi Santocanale,
  LIS, CNRS UMR 7020, Aix-Marseille Universit\'e,
  France
}
\email{luigi.santocanale@lif.univ-mrs.fr}
  \title[Fixed-point elimination in the \IPC]{Fixed-point elimination in the \IPC} 
\thanks{This is an extended version of the conference paper \cite{fossacs}}
\author[Ghilardi]{Silvio Ghilardi}
\affiliation{
  \institution{Dipartimento di Matematica, Universit\`a
    degli Studi di Milano}
  \country{Italy}
}
\email{silvio.ghilardi@unimi.it}
\author[Gouveia]{Maria Jo\~{a}o Gouveia} 
\affiliation{
  \institution{
    Faculdade de Ci\^encias da Universidade de Lisboa}
  \country{Portugal}
}
\email{mjgouveia@fc.ul.pt}
\author[Santocanale]{Luigi Santocanale}
\affiliation{
  \institution{
    LIS, CNRS UMR 7020, Aix-Marseille Universit\'e}
  \country{France}
}
\email{luigi.santocanale@lif.univ-mrs.fr}
\begin{document}
\maketitle

\begin{abstract}
  It is a consequence of existing literature that \lgfp{s} of monotone
polynomials on \Ha{s}---that is, the algebraic models of the
\IPC---always exist, even when these algebras are not complete as
lattices.  The reason is that these \efp{s} are definable by formulas
of the \Ipc. Consequently, the $\mu$-calculus based on intuitionistic
logic is trivial, every $\mu$-formula being equivalent to a
fixed-point free formula. We give in this paper an axiomatization of
\lgfp{s} of formulas, and an algorithm to compute a \fpf formula
equivalent to a given $\mu$-formula. The axiomatization of the \gfp is
simple. The axiomatization of the \lfp is more complex, in particular
every monotone formula converges to its \lfp by Kleene's iteration in
a finite number of steps, but there is no uniform upper bound on the
number of iterations. We extract, out of the algorithm, upper bounds
for such $n$, depending on the size of the formula.  For some
formulas, we show that these upper bounds are polynomial and optimal.

\end{abstract}

\section*{Introduction}

The original motivation for developing the research that we present in
this paper was the investigation of $\mu$-calculi based on \IL.  A
$\mu$-calculus \cite{AN01} is a prototypical kind of computational
logic, obtained from a base logic or a base algebraic system, by
adding distinct forms of iteration, \lfp{s} and \gfp{s}, so to
increase expressivity.
We ended up studying \fp{s} within \IL mostly by observing structural
similarities between the propositional modal $\mu$-calculus and the
\IPC (\Ipc).  \emph{Bisimulation quantifiers} (also known as
\emph{uniform interpolants}) within the propositional modal
$\mu$-calculus were studied in \cite{DAgostinoHollenberg2000}; in this
work a formula built by using these kind of quantifiers was employed
to prove that \PDL (Propositional Dynamic Logic, see \cite{DLbook})
lacks the uniform interpolation property.  In \cite{Pitts92} the
author discovered that \Ipc also has bisimulation quantifiers;
together with the deduction property, uniform interpolants give a
rather strong structure to the category of (finitely presented) \Ha{s}
(the algebraic models of the \Ipc); this structure was axiomatized and
studied in \cite{GhilardiZawadowski2011,GhilardiZawadowski97}.
Quantified formulas analogous to the one of
\cite{DAgostinoHollenberg2000} make sense in every category with this
structure and they indeed define the \efp{s} of monotone formulas.
This made us conjecture that a $\mu$-calculus based on \IL is trivial,
meaning that every $\mu$-formula is equivalent to a \fpf formula.
The conjecture actually holds because of a deep result in \IL.
It was proved in \cite{Ruitenburg84} that, for each
formula $\phi(x)$ of the \Ipc, there exists a number $n \geq 0$ such
that $\phi^{n}(x)$---the formula obtained from $\phi$ by iterating $n$
times
substitution of $\phi$ for the variable $x$---and
$\phi^{n+2}(x)$ are provably equivalent in \IL.  An immediate
corollary of this result is that a syntactically monotone formula
$\phi(x)$ converges both to its \lfp and to its \gfp in at most $n$
steps.
We write $\mu_{x}.\phi(x) = \phi^{n}(\bot)$ and $\nu_{x}.\phi(x) =
\phi^{n}(\top)$ to express this fact, using a modern notation based on
$\mu$-calculi.
  These two identities can be used to argue that
  every formula of a
  $\mu$-calculus based on \IL is equivalent to a \fpf formula.
  
  Ruitenberg's work
  leaves open how to compute or
estimate the least number $n$ such that
$\phi^{n}(x) = \phi^{n+2}(x)$---we shall call such a number the
Ruitenberg's number of $\phi$ and denote it by $\rho(\phi)$.  As our
motivations stem from \fp theory and $\mu$-calculi, we remark that
being able to compute or bound Ruitenberg's number $\rho(\phi)$ might
yield an over-approximation of the least integer $k$ such that
$\mu_{x}.\phi(x) = \phi^{k}(\bot)$---
we call such a number $k$ 
\emph{closure ordinal of $\phi$}.
For example, when considering the dual analogous problem, and
so the \gfp of $\phi$,
we shall see that the least number $k$
such that of $\phi$ is $\nu_{x}.\phi(x) = \phi^{k}(\top)$ is $1$ at most, while
$\rho(\phi)$ can be arbitrarily large.
\Lfp{s} over \IL have also been considered in \cite{Mardaev1993}.  The
author gave there an independent proof that \lfp{s} of monotone
intuitionistic formulas are definable.  His proof relies on semantics
methods and on the coding of \IL into \GRZL; the proof was further
refined in \cite{Mardaev1994} to encompass the standard coding of \IL
into its modal companion, the logic \Sfour.

The results presented in this paper are also part of a line of
research that we are currently exploring, and that lead us to
studying \fp{s} within \IL.  We aim at identifying, under a unified
perspective, reasons that make alternation-depth hierarchies of
$\mu$-calculi degenerate or trivial.
A $\mu$-calculus is obtained by adding formal \lgfp{s} to an
underlying logical-algebraic system, so it generates \fterms with
nested \efp{s}. The alternation-depth hierarchy \cite[\S 2.6]{AN01} of
a $\mu$-calculus measures the complexity of a \fterm, as a function of
the nesting of the different types of \fp{s} and with respect to a
fixed class of models.
It is well known that \fp{s} that are unguarded can be eliminated in
the propositional modal $\mu$-calculus \cite{Kozen83}.  This fact can
be rephrased by saying that the alternation-depth hierarchy of the
$\mu$-calculus over distributive lattices is trivial (every $\mu$-term
is equivalent to a \fpf term). To closely understand and to refine
this result was one of the goals of
\cite{FrittellaSantocanaleRAMICS}. In that paper the authors were able
to exhibit equational classes of lattices $\mathcal{D}_{n}$---with
$\mathcal{D}_{0}$ the class of distributive lattices---where the
\efp{s} can be uniformly computed by iterating a \fterm $n+1$ times
from the bottom/top of the lattice; moreover, they showed that these
uniform upper bounds are optimal. For those classes of lattices, the
degeneracy of the alternation-depth hierarchy originates in the
structure of the lattices in the class.
The next and most natural algebraic setting extending distributive
lattices and where to study \fp{s}, was given by \Ha{s} and \IL.

\medskip

This paper is divided in two parts. In the first part, we firstly show
how to eliminate \gfp{s}.  Namely we argue that, for every
intuitionistic formula $\phi(x)$ with the specified variable $x$
positive in $\phi(x)$, $\nu_{x}.\phi(x) = \phi(\top)$.  \Gfp{s} of
intuitionistic formulas are reached from the top of the lattice after
one iteration, exactly as in the case of distributive lattices. At a
second stage we present the elimination procedure of \lfp{s}; the
procedure yields, for every formula $\phi(x)$ as above, a (\fpf)
intuitionistic formula $\psi$ such that $\mu_{x}.\phi(x) = \psi$.
The two elimination procedures can be casted into a procedure that
yields a \fpf formula equivalent to an arbitrary formula of the
$\muIpc$, the $\mu$-calculus based on \IL.  Since \Ipc is decidable,
the procedure also provides a decision procedure for the \muIpc.
Even if elimination of \gfp{s} turns out to be somewhat trivial, it
plays an important role for eliminating \lfp{s}. Natural properties of
\fp{s} lead to identify two orthogonal syntactic fragments of the
\Ipc: we call \emph{\weaklynegative}, resp. \emph{\fullypositive}, the
formulas belonging to these fragments.  \Lfp elimination is split
between two kind of eliminations, one for each fragment. For
\weaklynegative formulas, elimination of \lfp{s} is a consequence of
\gfp elimination. \Lfp elimination for \fullypositive formulas relies
on these formulas being inflating (i.e., semantically they give rise
to inflating monotone functions) and other ingredients.

The second part of the paper studies closure ordinals of
intuitionistic positive formulas. The closure ordinal of
$\phi(x)$---which, we recall, is the least integers $n$ for which we
can write $\mu_{x}.\phi(x) = \phi^{n}(\bot)$---yields a representation
of the \lfp $\mu_{x}.\phi(x)$ alternative to the one presented in the
first part.  Such representation can be exploited notationally, as in
$\mu$-calculi with explicit approximations \cite{damGurov},
computationally, because of its reduced space requirements, at least
if variable sharing is used, and also axiomatically.  We firstly
present general results for producing upper bounds of closure ordinals
of monotone functions and then we add results that are specific for
\Ha{s} and \IL. Whenever it is possible, we also argue that those
bounds are tight. By combing these results and, at the same time, by
paralleling the \lfp elimination procedure, upper bounds of closure
ordinals of \fterms $\phi(x)$ can be computed. It turns out that these
bounds are not tight. We focus therefore on closure ordinals of
\fullypositive \fterms that, in view of tightness of bounds, are the
most problematic. We produce specific (and better) bounds for these
formulas; in this case our proof yields bounds on Ruitenburg's numbers
and so also new insights on his theorem.  We finish the second part
of the paper 
 by presenting a syntactic fragment (formulas in
the fragment are \emph{disjunctions of} what we call \emph{\Atops})
and prove a suprising fact: closure ordinals of formulas in this
fragment have $3$ as a uniform upper bound.

\medskip

Comparing the present work to our previous results on degeneracies of
alternation-depth hierarchies, reasons for degeneracies appear now to
have a very different nature.
Several are the ingredients contributing to the existence of a finite
closure ordinal of every intuitionistic formula, thus to the
degeneracy of the alternation-depth hierarchy of the $\mu$-calculus
based on \IL.
Probably the most important among them is \emph{strongness} of
monotone polynomials on \Ha{s}. The naming comes from category theory:
a monotone polynomial $\Pf : H \rto H$ (with $H$ a \Ha) is strong if
it has a strength; in turn, this is equivalent to say that, as a
functor, it is enriched over the closed category $H$
\cite{Kock72,Kelly82}.
Yet strongness is just a possible naming for a general logical
phenomenon, the capability of an equational theory to partly encode
quasi-equations. On the proof-theoretic side, this phenomenon is known
as the deduction theorem; on the algebraic side it translates to
equationally definable principal congruences \cite{Blok1984}.  In
modal logic the deduction theorem is equivalent to having a master
modality \cite[Theorem 64]{KrachtHandbook}; as a matter of fact,
having a master modality appears to be a common pattern in several
works on alternation-depth hierarchies modal $\mu$-calculi
\cite{Mardaev1994,Mardaev2007,AlberucciFacchini09,DAgostinoLenzi2010,BS2013}.
Other ingredients are the following.
For some polynomials, existence and finiteness of the closure ordinal
is a consequence of being \emph{inflating} (or expanding) and, on the
syntactic level, to a restriction to the use of conjunction that
determines a notion of disjunctive formula.
A key ingredient of the algorithm we present is creation of \lfp{s}
via the \emph{Rolling} equation (cf. Lemma~\ref{lemma:rolling}), a
fact already used in \cite{DAgostinoLenzi2010}. For \IL and \Ha{s},
where \fterms can be semantically antitone (i.e. contravariant),
existing \gfp{s} create \lfp{s}.
Overall the most striking difference with the case of distributive
lattices and generalizations of distributive lattices
\cite{FrittellaSantocanaleRAMICS} is the absence of a finite uniform
upper bound on the closure ordinals, 
the rate of convergence to the \lfp crucially depends on the size and
shape of the formula.

\medskip

The considerations that we shall develop rely on well-known
equivalences of  \fp expressions
\cite{BloomEsik93,AN01}.
This
distinguishes our approach from previous works
\cite{Ruitenburg84,Mardaev1993}.  Using these equivalences we can move
the focus from existence and definability of \fp{s} in \IL towards the
explicit construction of them. 
On the way, let us remark that the simple characterization of \gfp{s}
in \IL $\nu_{x}.\phi(x) = \phi(\top)$, that yet plays an important
role in the elimination procedure of \lfp{s}, appears to be orthogonal
to Ruitenbutg's work, while \gfp{s} are not considered in Mardaev's
work.
The need for algorithmic approaches in \fp elimination was emphasized
in \cite{LehtinenQuickert15} for 
the propositional modal $\mu$-calculus.

\medskip

The paper is organized as follows. The goal of the first part,
Sections~Section~\ref{sec:notation} to~\ref{sec:notation}, is to
present the \fp elimination procedure for the \IPC. 
We recall in Section~\ref{sec:notation} some elementary facts from \fp
theory. In Section~\ref{sec:intmucalculus} we present the \IPC and
introduce its \fp extension, the \muIPC.  In
Section~\ref{sec:strongfunctions} we pinpoint strongness, a property
of monotone functions on \Ha{s} that will be pervasive in all the
paper. We prove some elementary facts about strong functions and their
\lfp{s} and give a simple axiomatization of their \gfp{s}. In
Section~\ref{sec:quantifiers} we digress on bisimulation quantifiers
and argue that the existence of \efp{s} can be inferred from these
quantifiers.  Section~\ref{sec:procedure} presents the elimination
procedure.

The second and last part of the paper starts with
\Section~\ref{sec:generalUpperBounds} and  deals with estimating closure
ordinals of \fterms of the \Ipc. 
We begin by presenting some general results, that apply to arbitrary
monotone functions on posets with a least element. In the second half
of \Section~\ref{sec:generalUpperBounds} we present some results
specific to \Ha{s}; the results from this \Section are sufficient to
estimate an upper bound of the closure ordinal of any \fterm, yet
these upper bounds are not tight.  Therefore we estimate in
\Section~\ref{sec:ruitenburg} closure ordinals of conjunctions of
disjunctive formulas (defined in \Section~\ref{sec:procedure}) which,
in view of tigthness of upper bounds, appear to be the most
difficult. Our work actually yields upper bounds of Ruitenburg's
numbers of these formulas and a closed expression for the formula
$\phi^{\rho(\phi)}$ (when $\phi$ is such a disjunction).
Finally, in Section~\ref{sec:almostclosure} we exemplify how the
search for bounds of closure ordinals leads to some non-trivial
discovery: we present an infinite family of \fterms that---while being
more and more complex---uniformly converge to their \lfp in $3$ steps.

\newpage
\tableofcontents
\section{Elementary \fp theory}
\label{sec:notation}

Let $P$ and $Q$ be posets. A function $f : P \rto Q$ is
\emph{monotone} if $x \leq y$ implies $f(x) \leq f(y)$, for each
$x,y \in P$.  If $f : P \rto P$ is a monotone endofunction, then
$x \in P$ is a \emph{\pfp} of $f$ if $f(x) \leq x$; we denote by
$\Pref_{f}$ the set of prefixed points of $f$. 
Whenever $\Pref_{f}$ has a least element, we denote it by $\mu.f$.
Therefore, $\mu.f$ denotes the \emph{\lpfp} of $f$, whenever it
  exists.  If $\mu.f$ exists, then it is a \fp of $f$, necessarily
the least one.  
The notions of \lpfp and of \lfp
coincide on complete lattices
or when the \lfp is computed by
iterating from the bottom of a lattice; for our purposes they are
interchangeable, so we shall abuse of language and refer to $\mu.f$ as
the \lfp of $f$.
Dually (and abusing again of language), the \emph{\gfp} of $f$ shall
be denoted by $\nu.f$.

\smallskip

Let us mention some well known identities from \fp theory, see for
example \cite{BloomEsik93} or \cite{AN01}. Notice however that the
statements that we present below also assert and emphasize the
existence of some \lfp---we do not assume completeness of the
posets. Full proofs of these statements can be found in
\cite{Santocanale:parity}.
\begin{lemma}
  \label{lemma:rolling}
  Let $P, Q$ be posets, $f : P \rto Q$ and $g : Q \rto P$ be monotone
  functions. If $\mu.(g \circ f)$ exists, then $\mu.(f \circ g)$
  exists as well and is equal to $f(\mu.(g \circ f))$. 
\end{lemma}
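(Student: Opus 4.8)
The plan is to show directly that $f(\mu.(g\circ f))$ is the least prefixed point of $f\circ g$. Write $a := \mu.(g\circ f)$, which exists by hypothesis, and set $b := f(a)$. The proof has two halves: first that $b$ is a prefixed point of $f\circ g$, i.e. $(f\circ g)(b) \leq b$; second that $b$ is below every other prefixed point of $f\circ g$.

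\textbf{Step 1 ($b$ is a prefixed point).} Since $a$ is a fixed point of $g\circ f$ (a least prefixed point of a monotone map on a poset is automatically a fixed point), we have $g(f(a)) = a$, hence $g(b) = a$. Applying $f$ gives $(f\circ g)(b) = f(g(b)) = f(a) = b$, so in fact $b$ is a genuine fixed point of $f\circ g$, in particular a prefixed point. This is the easy half and needs only the fixed-point property of $a$.

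\textbf{Step 2 (minimality).} Let $c$ be any prefixed point of $f\circ g$, so $f(g(c)) \leq c$. The key move is to transport $c$ back along $g$: apply the monotone map $g$ to the inequality $f(g(c))\leq c$ to obtain $g(f(g(c))) \leq g(c)$, which says exactly that $g(c)$ is a prefixed point of $g\circ f$. By minimality of $a = \mu.(g\circ f)$ we get $a \leq g(c)$. Now apply the monotone map $f$: $b = f(a) \leq f(g(c)) \leq c$, using the prefixed-point inequality for $c$ in the last step. Hence $b \leq c$ for every prefixed point $c$ of $f\circ g$, so $\Pref_{f\circ g}$ has a least element and it equals $b = f(\mu.(g\circ f))$, as claimed.

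\textbf{Main obstacle.} There is no serious obstacle; the argument is a short diagram chase in the order relation. The only point requiring a little care — and the reason the statement is non-trivial in the poset setting rather than the complete-lattice setting — is that we must not assume $\mu.(f\circ g)$ exists a priori: Step 2 establishes existence by exhibiting $b$ as a lower bound of $\Pref_{f\circ g}$ that itself lies in $\Pref_{f\circ g}$ (via Step 1). One should also double-check the two uses of monotonicity (of $g$ in Step 2's first inequality, of $f$ in Step 2's conclusion) and the one use of the fact that a least prefixed point is a fixed point, which is where the hypothesis ``$\mu.(g\circ f)$ exists'' is really used.
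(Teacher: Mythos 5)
Your argument is correct and is the standard proof of the rolling rule: the paper itself omits the proof of Lemma~\ref{lemma:rolling}, deferring to the cited reference, and the argument given there is essentially yours (show $f(\mu.(g\circ f))$ is a fixed point via $g(f(a))=a$, then show it is below every prefixed point of $f\circ g$ by transporting along $g$ and using minimality of $\mu.(g\circ f)$). Your care about not presupposing the existence of $\mu.(f\circ g)$, and your explicit appeal to the fact that a least prefixed point is a fixed point, match exactly the conventions the paper sets up in Section~\ref{sec:notation}.
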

As we do not work in complete lattices (so we are not ensured that
\lfp{s} exist) we express the above statement via the equality
\begin{align}
  \label{eq:rolling}
  \tag{\texttt{Roll}}
  \mu.(f \circ g) & := f(\mu.(g \circ f))\,,
\end{align}
where the colon emphasizes existence: if the \lfp in the expression on
the right \emph{exists}, then this expression is the \lfp of
$f\circ g$. Analogous notations will be used later. We endow the
  product of two posets $P$ and $Q$ with the coordinatewise ordering.
  Therefore a function $f : P \times Q \rto R$ is monotone if,
  as a
  function of two variables,
   it is monotone in each variable. To deal
with \lfp{s} of functions of many variables, we use the standard
notation: for example, if $f : P\times P\rto P$ is the monotone
function $f(x,y)$, then, for a fixed $p \in P$, $\mu_{x}.f(x,p)$
denotes the \lfp of $f(x,p)$. Let us recall that the correspondence
$p \mapsto \mu_{x}.f(x,p)$---noted $\mu_{x}.f(x,y)$---is again
monotone.
\begin{lemma}
  \label{lemma:diag}
  If $P$ is a poset and $f : P\times P \rto P$ is a monotone mapping,
  then
  \begin{align}
    \label{eq:diag}
    \tag{\texttt{Diag}}
    \mu_{x}.f(x,x) & := \mu_{x}.\mu_{y}.f(x,y)\,.
  \end{align}
\end{lemma}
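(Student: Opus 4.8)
The plan is to unwind the colon notation on both sides and show two things: (i) if the right-hand side $\mu_x.\mu_y.f(x,y)$ exists, then the diagonal function $x \mapsto f(x,x)$ has a least prefixed point, and (ii) the two values coincide. Write $g(x) := \mu_y.f(x,y)$ for the inner \lfp (wherever it is defined) and let $a := \mu_x.g(x) = \mu_x.\mu_y.f(x,y)$ be the outer one, which we assume exists. Since $a$ is a fixed point of $g$, we have $a = g(a) = \mu_y.f(a,y)$, and in particular $f(a,a) \le a$ because $a$, being the \lfp of $f(a,-)$, is also a prefixed point of $f(a,-)$. So $a$ is a prefixed point of the diagonal $d(x) := f(x,x)$; this is the easy half.

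For the reverse inequality, I would take an arbitrary prefixed point $b$ of $d$, i.e. $f(b,b) \le b$, and show $a \le b$. From $f(b,b) \le b$ we get that $b$ is a prefixed point of $f(b,-)$, hence the \lfp $\mu_y.f(b,y) = g(b)$ exists and satisfies $g(b) \le b$. Thus $b$ is a prefixed point of $g$; since $a = \mu_x.g(x)$ is the least such, $a \le b$. Combining with the easy half, $a$ is the least prefixed point of $d$, i.e. $\mu_x.f(x,x)$ exists and equals $a$, which is exactly the assertion of \eqref{eq:diag}. Note this argument only invokes existence of least prefixed points of one-variable restrictions, consistent with the paper's refusal to assume completeness; the monotonicity of $p \mapsto \mu_x.f(x,p)$ recalled just before the statement is what guarantees $g$ is itself monotone, so that "$\mu_x.g(x)$" makes sense.

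The main subtlety — rather than a deep obstacle — is bookkeeping about \emph{existence} hidden inside the colon notation: one must be careful that $g(b) = \mu_y.f(b,y)$ is asserted to exist precisely because we exhibited $b$ as a prefixed point of the monotone map $f(b,-)$ (a least prefixed point need not exist in a general poset, but here we have produced a witness), and similarly that nothing is claimed unless the right-hand side of \eqref{eq:diag} is defined. A clean way to package everything is to prove the single set-theoretic identity $\Pref_{d} = \Pref_{g}$ (as subsets of $P$): the inclusion $\Pref_g \subseteq \Pref_d$ follows since $g(x) \le x$ gives $f(x,x) \le f(x,g(x)) = f(x,\mu_y.f(x,y)) \le \mu_y.f(x,y) = g(x) \le x$ using monotonicity of $f$ in its first argument and the prefixed-point property of the inner \lfp, while $\Pref_d \subseteq \Pref_g$ is the argument of the previous paragraph. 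Once $\Pref_d = \Pref_g$, the two sets have the same least element whenever either does, which is \eqref{eq:diag}.
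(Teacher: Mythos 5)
Your first two paragraphs constitute a correct and complete proof, and it is the standard one (the paper itself does not prove this lemma but defers to a reference, so there is no in-paper argument to compare against): you show that $a:=\mu_{x}.\mu_{y}.f(x,y)$, being a \emph{fixed} point of $g(x)=\mu_{y}.f(x,y)$, satisfies $f(a,a)\leq a$, and that every prefixed point $b$ of the diagonal $d(x)=f(x,x)$ satisfies $g(b)\leq b$ and hence $a\leq b$; these two facts already exhibit $a$ as the least element of $\Pref_{d}$. The bookkeeping about existence is also handled correctly, except for one small conceptual slip: exhibiting a prefixed point of $f(b,-)$ does \emph{not} in general witness the existence of a least one in a poset; what saves you is that the lemma's hypotheses (as glossed after the statement) already assume $\mu_{y}.f(x,y)$ exists for \emph{every} $x\in P$, so nothing breaks.

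The ``clean packaging'' in your last paragraph, however, is wrong and should not replace the argument above: the identity $\Pref_{d}=\Pref_{g}$ is false in general. Only $\Pref_{d}\subseteq\Pref_{g}$ holds. Your justification of the other inclusion starts with $f(x,x)\leq f(x,g(x))$, which by monotonicity in the second argument would require $x\leq g(x)$, whereas you only have $g(x)\leq x$; the step is backwards. A concrete counterexample: on the chain $0<1<2$ let $f(2,y)=2$ for all $y$ and $f(0,y)=f(1,y)$ with $f(1,0)=0$ and $f(1,1)=f(1,2)=2$; this $f$ is monotone in both arguments, $g(1)=\mu_{y}.f(1,y)=0\leq 1$ so $1\in\Pref_{g}$, yet $f(1,1)=2\not\leq 1$, so $1\notin\Pref_{d}$. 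The lemma itself is unaffected because the least elements of the two sets still coincide (here both are $0$), but that is exactly because your real proof uses the fixed-point property of $a$, not membership of arbitrary prefixed points of $g$ in $\Pref_{d}$. Keep the first two paragraphs, drop the claimed set equality (or weaken it to the single inclusion $\Pref_{d}\subseteq\Pref_{g}$ plus the observation that $\min\Pref_{g}\in\Pref_{d}$).
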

Again, the expression above shall be read by saying that if
$\mu_{y}.f(x,y)$ exists, for each $x \in P$, and if
$\mu_{x}.\mu_{y}.f(x,y)$ exists, then $\mu_{x}.f(x,x)$ exists as well
and is given by the expression on the right.  
  
Recall that a function $f$ from $A$ to a product $B \times C$ is
uniquely determined by two functions $g : A \rto B$ and $h : A \rto C$
via composing with projections; we therefore write
$f = \langle g,h\rangle$ and use a similar notation for products with
more factors.
\begin{lemma}
  \label{lemma:bekic}
  If $P$ and $Q$ are posets and $\langle f,g\rangle : P\times Q \rto
  P\times Q$ is a monotone function, then $\mu.\langle f,g \rangle :=
  \langle \mu_{1},\mu_{2}\rangle$, where
  \begin{align}
    \label{eq:bekic}
    \tag{\texttt{\Bekic}}
    \mu_{1} & = \mu_{x}.f(x,\mu_{y}.g(x,y)) \quad \tand\quad \mu_{2} =
    \mu_{y}.g(\mu_{1},y)\,.
  \end{align} 
\end{lemma}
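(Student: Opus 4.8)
The plan is to verify directly that $\langle\mu_{1},\mu_{2}\rangle$ is the least prefixed point of $\langle f,g\rangle$, under the existence hypotheses read off from the colon notation: that $\mu_{y}.g(x,y)$ exists for every $x\in P$, that $\mu_{1}:=\mu_{x}.f(x,\mu_{y}.g(x,y))$ exists, and that $\mu_{2}:=\mu_{y}.g(\mu_{1},y)$ exists. I would first check that $\langle\mu_{1},\mu_{2}\rangle$ is a fixed point of $\langle f,g\rangle$, then that it lies below every prefixed point of $\langle f,g\rangle$; the two facts together say that $\langle\mu_{1},\mu_{2}\rangle$ is the least prefixed point, hence --- being itself a fixed point --- the least fixed point. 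Throughout I write $h(x):=f(x,\mu_{y}.g(x,y))$, which is monotone since $x\mapsto\mu_{y}.g(x,y)$ is monotone (as recalled above) and $f$ is monotone; by construction $\mu_{1}=\mu.h$.

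For the fixed point property: since $\mu_{2}=\mu_{y}.g(\mu_{1},y)$ is a fixed point of $y\mapsto g(\mu_{1},y)$, we get $g(\mu_{1},\mu_{2})=\mu_{2}$; and since $\mu_{1}=\mu.h$ is a fixed point of $h$, we get $f(\mu_{1},\mu_{y}.g(\mu_{1},y))=\mu_{1}$, that is $f(\mu_{1},\mu_{2})=\mu_{1}$. Hence $\langle f,g\rangle(\mu_{1},\mu_{2})=(\mu_{1},\mu_{2})$, so in particular $\langle\mu_{1},\mu_{2}\rangle\in\Pref_{\langle f,g\rangle}$.

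For minimality, let $(a,b)$ be any prefixed point of $\langle f,g\rangle$, so $f(a,b)\leq a$ and $g(a,b)\leq b$. From $g(a,b)\leq b$, the element $b$ is a prefixed point of $y\mapsto g(a,y)$, and since $\mu_{y}.g(a,y)$ exists by hypothesis, $\mu_{y}.g(a,y)\leq b$. Monotonicity of $f$ in its second argument then gives $h(a)=f(a,\mu_{y}.g(a,y))\leq f(a,b)\leq a$, so $a\in\Pref_{h}$ and therefore $\mu_{1}=\mu.h\leq a$. Monotonicity of $g$ in its first argument gives $g(\mu_{1},b)\leq g(a,b)\leq b$, so $b$ is a prefixed point of $y\mapsto g(\mu_{1},y)$ and hence $\mu_{2}=\mu_{y}.g(\mu_{1},y)\leq b$. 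Thus $(\mu_{1},\mu_{2})\leq(a,b)$, which is what we wanted.

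The argument is conceptually the textbook Beki\'{c} computation, so I do not expect a genuine obstacle; the only care needed is the bookkeeping forced by the colon convention. Every invocation of a least (pre)fixed point must be one whose existence is granted by the hypotheses --- in particular $\mu_{y}.g(a,y)$ for the first component $a$ of an \emph{arbitrary} prefixed point, which is exactly covered by the clause ``$\mu_{y}.g(x,y)$ exists for every $x\in P$'' --- and one must keep track of the fact that $x\mapsto\mu_{y}.g(x,y)$ is monotone, so that $h$ is a bona fide monotone endofunction of $P$ to which the notation $\mu.h$ applies.
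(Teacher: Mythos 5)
Your proof is correct and complete: the direct verification that $\langle\mu_{1},\mu_{2}\rangle$ is a fixed point lying below every prefixed point, with careful tracking of which least fixed points are assumed to exist, is exactly the standard Beki\'c argument. The paper itself omits the proof of this lemma and defers to a reference, so there is nothing to contrast with; your handling of the colon (existence) convention and of the monotonicity of $x\mapsto\mu_{y}.g(x,y)$, which the paper recalls just before the lemma, is precisely what is needed.
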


\section{The  \muIPC}
\label{sec:intmucalculus}

Formulas of the \IPC (\Ipc) are generated according to the following grammar:
\begin{align}
  \label{grammar:IL}
  \phi & \production x \mid \top \mid \phi \land \phi \mid \bot \mid \phi
  \vee \phi \mid \phi \impl \phi\,, 
\end{align}
where $x$ ranges over a countable set $\Vars$ of propositional
variables.
The set of these formulas shall be denoted $\FIpc$.
The consequence relation of the \Ipc, relating a set of formulas to a
formula, is described by means of the intuitionisitc sequent calculus,
Gentzen's system \LJ \cite{Gentzen35}.
Therefore we shall write $\Gamma \cons \delta$ if the
sequent $\Gamma \vdash \delta$ is derivable in the system $\LJ$.

\medskip

It is well known that the \Ipc is sound and complete w.r.t. the class
of its algebraic models, the \Ha{s} that we introduce next.
\begin{definition}
  A \emph{\Ha} $H$ is a bounded lattice (so $H$ has a least element
  $\bot$ and a greatest element $\top$) equipped with a binary
  operation $\impl$ such that the following equations hold in $H$:
  \begin{align}
    \notag
    x \land (x \impl y) & = x \land y\,, &
    x \land (y \impl x) & = x\,,\\
    x \impl x & = \top \,, &
    x \impl (y \land z) & = (x \impl y) \land  (x \impl z)\,.
    \label{eq:distrimpl}
  \end{align}
\end{definition}
We can define on any \Ha a partial order by saying that $x \leq y$
holds when $x \vee y = y$.  We identify formulas of the \Ipc with
terms of the theory of \Ha{s}, constructed therefore from variables
and using the signature $\langle \top,\land, \bot,\vee, \impl\rangle$;
we shall therefore refer to objects generate by the grammar
\eqref{grammar:IL} as \emph{\fterms}.\footnote{
  In view of the verbosity of the naming \fterms we shall often use
  formula or term as a synonym of \fterm. }
Let $\phi$ 
be such a \fterm, let $H$ be a \Ha and let $v : \Vars \rto H$
be a valuation of the propositional variables in $H$; we write
$\eval{\phi}$ for the result of evaluating the formula in $H$,
starting from the variables (the definition of $\eval{\cdot}$ is given
as usual by induction).
The soundness and completeness theorem of the \Ipc over \Ha{s}, see
e.g. \cite{BJ2006}, can then be stated as follows:
\begin{thm*}
  If  $\,\Gamma$ is a finite set of \fterms and $\phi$ is a \fterm, then
  $\Gamma \cons \phi$ holds if and only if
  $\bigwedge_{\gamma \in \Gamma} \eval{\gamma} \leq \eval{\phi}$
  holds, in every \Ha $H$ and for every valuation of the propositional
  variables $v : \Vars \rto H$.
\end{thm*}
Given this theorem, we shall often abuse of notation and write $\leq$
in place of $\cons$, and the symbol $\eqIpc$ (or even the equality
symbol $=$) to denote provable equivalence of formulas.  That is to
say, we identify \fterms with elements of the \LTa of the \Ipc.
Recall that this algebra is also the free \Ha over the set $\Vars$ of
propositional variables; therefore we shall denote it by
$\FH[\Vars]$. More generally, for a set of generators $Y$, the free
\Ha on this set shall be denoted by $\FH[Y]$.

\medskip

We aim at studying \efp{s} on \Ha{s} and so we formalize next the
\muIPC (\muIpc).

An occurrence of a variable $x$ is \emph{positive} in a \fterm $\phi$
if, in the syntax tree of $\phi$, the path from the root to the leaf
labeled by this variable occurrence contains an even number of nodes
such that: (i) they are labeled by a subformula
$\psi_{1} \impl \psi_{2}$ and (ii) their immediate successor on the
path is labeled by the subformula $\psi_{1}$.  If on this path the
number of those nodes is odd, then we say that this occurrence of $x$
is \emph{negative} in $\phi$.  
For example, in the formula-term $((x\impl y)\impl (x\lor z))\impl w$
the first occurrence of $x$ is positive but the second occurrence is
negative.
A variable $x$ is positive in a formula
$\phi$ if each occurrence of $x$ is positive in $\phi$. A variable $x$
is negative in a formula $\phi$ if each occurrence of $x$ is negative
in $\phi$.
We enrich the grammar~\eqref{grammar:IL} with the following two
productions:
\begin{align*}
  \phi \production & \mu_{x}.\phi\,,
  & \phi &\production \nu_{x}.\phi\,,
\end{align*}
subject to the restriction that $x$ is positive in $\phi$; we obtain
in this way a grammar for formulas of the \muIpc. The set of formulas
generated by this grammar shall be denoted by $\FmuIpc$.
Notice that the symbols $\mu$ and $\nu$
syntactically behaves as binders (similar to quantifiers), so the
notions of free and bound variable in a \fterm is defined as usual.

\medskip
 
We present next the semantics
of the \muIpc over \Ha{s}. An equivalent
formulation of the \muIpc, via a sequent calculus, appears in \cite[\S
2]{Clairambault13}.

For a fomula $\phi$ of the $\muIpc$, let $\Vars[\phi]$ denote the set
of variables having a free occurrence in $\phi$. Let $H$ be a \Ha
(that we do not suppose complete); we define next a \emph{partial}
evaluation function sending $\phi \in \muIpc$ and
$v : \Vars[\phi] \rto H$ to $\eval{\phi}$.  We only cover the cases of
formulas $\mu_{x}.\phi$ and $\nu_{x}.\phi$,
since the other cases are the usual ones.
Thus let $\phi$ be a formula of the \muIpc, let $x$ be positive in
$\phi$, and suppose that $\eval[u]{\phi}$ is defined, for each
$u : \Vars[\phi] \rto H$.\footnote{If, for some
  $u : \Vars[\phi] \rto H$, $\eval[u]{\phi}$ is not defined, then
  $\eval{\mu_{x}.\phi},\eval{\nu_{x}.\phi}$ are not defined.} 
If $v : \Vars[\phi] \setminus \set{x}\rto H$ is a valuation of all the
free variables of $\phi$ but $x$, then $(v,h/x) : \Vars[\phi] \rto H$
is the valuation such that $(v,h/x)(x) = h$ and $(v,h/x)(y) = v(y)$
for $y \neq x$.  Since $x$ is positive in $\phi$, then the function
\begin{align*}
  \eval[v]{\phi} & : H \rto H \,,
  \quad
  h  \mapsto \eval[(v,h/x)]{\phi}\,
\end{align*}
is monotone; therefore, if the \efp{s}
of this function exist,\footnote{If any 
of the \efp{s} does not exist,
  then we leave the corresponding expressions undefined.} then we
define
\begin{align*}
  \eval{\mu_{x}.\phi}
  & \eqdef \Mu{\eval{\phi}}, 
  &
  \eval{\nu_{x}.\phi}
  & \eqdef  \Nu{\eval{\phi}}\,. 
\end{align*}
Clearly, when $H$ is a complete \Ha, then \efp{s} of monotone
functions exists, so the correspondence $(\phi,v) \mapsto \eval{\phi}$
is total. We argue next that this correspondence is \emph{always}
total.

We say that two formulas $\phi$ and $\psi$ of the $\muIpc$ are
equivalent over \Ha{s} if, for each \Ha $H$ and each
$v : \Vars[\phi] \cup \Vars[\psi] \rto H$, $\eval{\phi}$ is defined if
and only if $\eval{\psi}$ is defined, and $\eval{\phi} = \eval{\psi}$
whenever they are both defined. We write $\phi \eqmuIpc \psi$ when two
formulas $\phi$ and $\psi$ of the $\muIpc$ are equivalent over \Ha{s}.

\medskip

Let us say that a formula $\phi$ of $\muIpc$ is \emph{\fpf} if it does
not contain either of the symbols $\mu,\nu$ (that is, if it is a
formula of the $\Ipc$).
\begin{proposition}
  \label{prop:total}
  Every formula $\phi$ of the $\muIpc$ is equivalent over \Ha{s} to a
  \fpf formula $\psi$.  In particular $\eval{\phi}$ is defined, for
  each \fterm $\phi$ of the $\muIpc$, each \Ha $H$, and each
  $v : \Vars[\phi] \rto H$,
\end{proposition}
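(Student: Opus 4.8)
The plan is to proceed by structural induction on the formula $\phi$ of the $\muIpc$, producing for each $\phi$ a \fpf formula $\psi$ with $\phi \eqmuIpc \psi$. The cases where $\phi$ is a variable, a constant, or is built by $\land,\vee,\impl$ from smaller formulas are immediate: by the inductive hypothesis each immediate subformula is equivalent to a \fpf formula, and applying the connective to \fpf formulas again gives a \fpf formula (here one uses that $\eqmuIpc$ is a congruence for the \Ipc-connectives, which follows directly from the inductive definition of $\eval{\cdot}$). So the entire content is in the two binder cases, $\phi = \mu_{x}.\chi$ and $\phi = \nu_{x}.\chi$, with $x$ positive in $\chi$.

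For these, first replace $\chi$ by an equivalent \fpf formula $\chi'$ using the inductive hypothesis; note $x$ is still positive in $\chi'$, since the semantic notion of positivity (monotonicity in $x$) is preserved, and we may also assume $\chi'$ is \emph{syntactically} positive in $x$ by the standard observation that over \Ha{s} a term monotone in $x$ is provably equivalent to one syntactically positive in $x$ (alternatively one carries this as part of the induction). Now the task reduces to: for a \fpf formula $\chi'(x)$ with $x$ syntactically positive, exhibit \fpf formulas computing $\mu_{x}.\chi'$ and $\nu_{x}.\chi'$ in every \Ha, \emph{including the point that these \efp{s} exist even when $H$ is not complete}. For the \gfp this is the easy half: the paper announces in the introduction that $\nu_{x}.\chi'(x) = \chi'(\top)$, i.e. the monotone polynomial $\eval{\chi'}$ reaches its greatest (pre-)fixed point after one step from $\top$; one checks $\eval{\chi'}(\chi'(\top)) = \chi'(\top)$ and that any fixed point lies below $\chi'(\top)$, using monotonicity and $h \le \top$. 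For the \lfp one invokes the \lfp-elimination procedure that the paper develops (Section~\ref{sec:procedure}): it assigns to each such $\chi'(x)$ an explicit \fpf formula $\psi$ with $\mu_{x}.\chi'(x) = \psi$ in every \Ha, the equality again being understood in the strong sense that the left-hand \lfp exists and equals $\psi$. Substituting $\psi$ (resp. $\chi'(\top)$) for $\phi$ completes the inductive step, and the "in particular" clause follows because a \fpf formula always has a total evaluation.

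The delicate point — and the reason the statement is not merely a corollary of "\efp{s} exist in complete \Ha{s}" — is the handling of \emph{partiality}: the evaluation $\eval{\cdot}$ was defined as a partial function, and to even form $\eval{\mu_{x}.\chi}$ one must already know $\eval[u]{\chi}$ is defined for all $u$. The induction must therefore thread definedness through: the inductive hypothesis should assert not only "$\phi$ is equivalent to some \fpf $\psi$" but, as a consequence, "$\eval{\phi}$ is total", and in the binder cases one must verify that the \efp of the monotone map $\eval[v]{\chi'} : H \rto H$ genuinely exists in \emph{every} \Ha (not just complete ones) — this is exactly what the explicit \fpf witnesses $\psi$ and $\chi'(\top)$ buy us, since $\eval{\psi}$ is defined a priori and the elimination lemmas assert it \emph{is} the \lfp/\gfp. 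So the main obstacle is not a computation but a bookkeeping one: stating the inductive invariant so that definedness, syntactic positivity, and the congruence property of $\eqmuIpc$ all travel together, and then citing the \gfp- and \lfp-elimination results of Section~\ref{sec:procedure} as the substantive inputs.
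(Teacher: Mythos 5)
Your proposal is correct, but it takes a genuinely different route from the paper's own proof of this Proposition. The paper reduces, exactly as you do, to the two binder cases $\mu_{x}.\psi$ and $\nu_{x}.\psi$ with $\psi$ \fpf, but then it settles both cases in one stroke by invoking Ruitenburg's theorem \cite{Ruitenburg84}: there is an $n$ with $\psi^{n}(x) \eqIpc \psi^{n+2}(x)$, so instantiating at $\bot$ (resp.\ $\top$) and squeezing with monotonicity gives $\psi^{n+1}(\bot) \eqIpc \psi^{n}(\bot)$, which forces $\psi^{n}(\bot)$ to be the least (pre)fixed point; dually for $\top$. You instead cite the paper's own machinery: $\nu_{x}.\psi = \psi(\top)$ (which, note, needs \emph{strongness} of polynomials via Propositions~\ref{prop:Peirce} and~\ref{prop:phitop}, not just monotonicity and $h\le\top$ as your parenthetical suggests) and the \lfp-elimination procedure of Section~\ref{sec:procedure}. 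This is legitimate and not circular---the elimination procedure rests on Lemmas~\ref{lemma:rolling}--\ref{lemma:bekic}, Proposition~\ref{prop:fixrimpl}, Proposition~\ref{prop:mudformula} and Proposition~\ref{prop:weaklynegative}, none of which uses Proposition~\ref{prop:total}---but it turns a two-line argument into a forward reference to the main construction of the first half of the paper. What the paper's route buys is brevity and independence from the later sections, at the price of resting on a deep external theorem; what your route buys is a self-contained, constructive proof that actually produces the witness $\psi$ (and, as you rightly stress, makes the existence of the \efp{s} in non-complete \Ha{s} explicit rather than extracted from the stabilization $\psi^{n}\eqIpc\psi^{n+2}$). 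Your attention to threading definedness and syntactic positivity through the induction is a real issue that both proofs must address and that the paper's one-line reduction also glosses over.
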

\begin{proof}
  Clearly, the first statement of the Proposition holds if we can show
  that it holds whenever $\phi = \mu_{x}.\psi$ or
  $\phi = \nu_{x}.\psi$, where $\psi$ is a \fpf formula.
  For a natural number $n \geq 0$, let $\psi^{n}(x)$
  denote the formula obtained by substituting $x$ for $\psi$ $n$
  times. 
  Ruitenburg \cite{Ruitenburg84} proves that, for each intuitionisitic
  propositional formula $\psi$, there exists a number $n \geq 0$ such
  that the formulas $\psi^{n}(x) \eqIpc \psi^{n + 2}(x)$.
  If $x$ is positive in $\psi$, then instantiating $x$ with $\bot$,
  leads to the equivalence $\psi^{n+1}(\bot) \eqIpc
  \psi^{n}(\bot)$. Yet this relation enforces $\psi^{n}(\bot)$ to be
  the \lfp of $\psi$, namely
  $\eval{\mu_{x}.\psi} = \eval{\psi^{n}(\bot)}$ for each $H$ and
  $v : \Vars[\psi] \rto H$. That is, we have
  $\mu_{x}.\psi \eqmuIpc \psi^{n}(\bot)$; similarly, we get
  $\nu_{x}.\psi \eqmuIpc \psi^{n}(\top)$.
\end{proof}
According to the Proposition (and to Ruitenburg's result
\cite{Ruitenburg84}) the expansion of the \Ipc with \efp{s} does not
increase its expressive power. This does not exclude the use of
$\muIpc$ as a convenient formalism, but raises the problem of
(efficiently) computing, for each $\phi \in \FmuIpc$, a formula
$\psi \in \FIpc$ such that $\phi \eqmuIpc \psi$.

For a formula $\mu_{x}.\phi$ with $\phi$ \fpf, this can be achieved by
computing the Ruitenburg's numbers $\rho(\phi)$. An attentive reading
of Ruitenburg's paper shows that $\rho(\phi) \leq 2n + 2$ where $n$ is
the size of the formula. Yet, $\rho(\phi)$ might not be an optimal as
an upper bound to $n$ such that $\mu_{x}.\phi \eqmuIpc \phi^{n}(\bot)$
or $\nu_{x}.\phi \eqmuIpc \phi^{n}(\top)$.

\section{Strong monotone functions and \fp{s}}
\label{sec:strongfunctions}

If $H$ is a Heyting algebra and $f : H \rto H$ is any function, then
$f$ is said to be \emph{compatible} if
\begin{align}
  x \land f(y) & = x \land f(x \land y)\,,
  \quad\text{for any $x, y \in H$.}
   \label{eq:compatible}
\end{align}
\begin{remark}
  We are mainly interested in monotone functions. If $f$ as above is
  also \emph{monotone}, then $f$ is compatible if and only if it is
  \emph{strong}, meaning that it satisfies
  \begin{align}
    x \land f(y) & \leq f(x \land y)\,, &&\text{for any $x, y \in H$.} \label{eq:strongconj} 
  \end{align}
  The interplay between \fp{s} and strong monotone functions has
  already been emphasized, mainly in the context of categorical
  proof-theory and semantics of functional programming languages with
  inductive data types \cite{CockettSpencer95,Clairambault13}.  It is
  well known from categorical literature \cite{Kock72} that
  condition~\eqref{eq:strongconj} is equivalent to any of the
  following two conditions:
  \begin{align}
    f(x \impl y) & \leq x \impl f(y)\,, &&\text{for any $x, y \in H$,}\label{eq:strongimpl} \\
    x \impl y & \leq f(x) \impl f(y)\,,  &&\text{for any
      $x, y \in H$.}\label{eq:strongenriched}
  \end{align}
\end{remark}

\medskip

Recall that if $v : \Vars[\phi] \setminus \set{x}\rto H$ is a
valuation of all the free variables of $\phi$ but $x$, then
$(v,h/x) : \Vars[\phi] \rto H$ is the valuation such that
$(v,h/x)(x) = h$ and $(v,h/x)(y) = v(y)$ for $y \neq x$.
\begin{definition}
  \label{def:monotonepol}
  Let $H$ be a Heyting algebra. We say that a function $\Pf : H \rto H$
  is a \emph{polynomial} if there exist a formula $\phi \in\FIpc$, a
  variable $x$, and a valuation
  $v : \Vars[\phi] \setminus \set{x} \rto H$ such that, for each
  $h \in H$, we have $\Pf(h) = \eval[(v,h/x)]{\phi}$.
\end{definition}
Equivalently, a polynomial on $H$ can be identified with an element of
the polynomial \Ha $H[x]$, where the last is defined as the coproduct
(in the category of \Ha{s}) of $H$ with the free \Ha on one
generator. In Section~\ref{sec:quantifiers} we shall study further
such polynomial algebras and exploit their properties.

In the next Propositon, the analogous statement for Boolean algebras
is credited to Peirce, in view of the iteration rule for existential
graphs of type Alpha \cite{Dau2006}.
\begin{proposition}
  \label{prop:Peirce}
  Every polynomial $\Pf$ on a \Ha is compatible. In particular, if $\Pf$
  is monotone, then it is strong.
\end{proposition}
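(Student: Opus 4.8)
The plan is to prove the Proposition by structural induction on a \fterm $\phi \in \FIpc$ representing the polynomial $\Pf$; the ``in particular'' clause is then immediate from the Remark preceding the statement, since for a monotone function compatibility and strongness coincide. So I fix a variable $x$ and a valuation $v$ with $\Pf(h) = \eval[(v,h/x)]{\phi}$, and abbreviate $\phi[h] := \eval[(v,h/x)]{\phi}$. Using $z$ for a generic element of $H$ (to avoid a clash with the distinguished variable $x$), compatibility of $\Pf$ is exactly the assertion that $z \land \phi[y] = z \land \phi[z \land y]$ for all $y, z \in H$, and this is what I would prove by induction on $\phi$.

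Before the induction I would record three ``localization'' identities, valid for all $a, b, z$ in any \Ha:
\begin{align*}
 z \land (a \land b) &= (z \land a) \land (z \land b)\,,\\
 z \land (a \vee b) &= (z \land a) \vee (z \land b)\,,\\
 z \land (a \impl b) &= z \land \bigl((z \land a) \impl (z \land b)\bigr)\,.
\end{align*}
The first is just idempotency and commutativity of meet; the second holds because \Ha{s} are distributive lattices; the third is a two-line residuation argument from the defining equations (for ``$\leq$'' note $z \land (a \impl b) \land (z \land a) \leq z \land b$, and for ``$\geq$'' note $z \land ((z\land a) \impl (z\land b)) \land a \leq b$). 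Each of these pulls the outer $z \land (-)$ through a connective in such a way that, afterwards, every immediate subformula occurs only inside a meet with $z$.

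With these in hand the induction is routine. The base cases are immediate: for $\phi = x$ one has $z \land y = z \land (z \land y)$ by idempotency, while for $\phi \in \set{\top,\bot}$, and for $\phi$ a variable other than $x$, the polynomial $\Pf$ is constant. For $\phi = \psi_1 \circ \psi_2$ with $\circ$ one of $\land, \vee, \impl$, the inductive step is: (i) apply the relevant localization identity to $z \land \phi[y]$, so that each $\psi_i$ now occurs only as $z \land \psi_i[y]$; (ii) rewrite $z \land \psi_i[y]$ as $z \land \psi_i[z \land y]$ using the two induction hypotheses; (iii) run the localization identity backwards, this time with $y$ replaced by $z \land y$. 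For implication this reads
\begin{align*}
 z \land \phi[y]
 &= z \land \bigl((z \land \psi_1[y]) \impl (z \land \psi_2[y])\bigr)\\
 &= z \land \bigl((z \land \psi_1[z \land y]) \impl (z \land \psi_2[z \land y])\bigr)\\
 &= z \land \phi[z \land y]\,,
\end{align*}
and the conjunction and disjunction cases are entirely analogous.

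The only point that is not symbol-pushing is the implication localization identity $z \land (a \impl b) = z \land ((z \land a) \impl (z \land b))$, so that is where I expect the (minor) difficulty to sit; everything built on top of it is mechanical. I would also stress that monotonicity of $\Pf$ is never used, so that compatibility in fact holds even when $x$ occurs negatively in $\phi$; it is only in deriving the ``in particular'' clause that we invoke the Remark and move to the strong-function reformulations \eqref{eq:strongimpl} and \eqref{eq:strongenriched}.
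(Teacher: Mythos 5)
Your proof is correct, and it takes a more self-contained route than the paper's. The paper's argument is a two-line appeal to the replacement lemma of the \Ipc: from $z \biimpl w \cons \phi(z) \biimpl \phi(w)$, instantiating $w$ by $z\land y$ and using $x \cons y \biimpl (x\land y)$, one gets $x \cons \phi(y)\biimpl\phi(x\land y)$, which is exactly the compatibility equation~\eqref{eq:compatible} after meeting both sides with $x$. You instead prove the needed instance of replacement directly by structural induction in an arbitrary \Ha, via the three localization identities; this is essentially inlining the (algebraic) proof of the replacement lemma, with the implication case $z \land (a\impl b) = z \land ((z\land a)\impl(z\land b))$ carrying all the content, just as the implication case does in the usual inductive proof of replacement. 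What the paper's version buys is brevity, by outsourcing the induction to a standard fact about \Ipc; what yours buys is a purely equational argument that never leaves the algebra, and it makes explicit the observation (implicit in the paper's statement but worth stressing) that monotonicity plays no role in compatibility and only enters when passing to strongness via the equivalence of \eqref{eq:compatible} with \eqref{eq:strongconj} for monotone maps. Your base cases, the residuation computations for the implication identity, and the three-step rewrite in each inductive case all check out.
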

\begin{longversion}
  \begin{proof}
    If $H$ is an Heyting algebra, then an upset $F \subseteq H$ is a
    filter if it is closed under meets. We recall that filters $F$
    bijectively correspond to congruences $\theta$, by saying $x\theta
    y$ if and only if $x \leftrightarrow y \in F$.

    Thus, if $\Pf : H \rto H$ is a polynomial, then write it as
    $\eval[]{\phi}(z,\vec{v})$ for a tuple of elements of $\vec{v}$ of
    $H$.

    Let also $H' = H/F$, where $F$ is the upset of $x \impl y$, so
    $\eval[]{\phi}(z,[\vec{v}])$ is a monotone polynomial on $H'$. As
    in $H'$ we have $[x] \leq [y]$, then
    \begin{align*}
      [\Pf(x)] = [\,\eval[]{\phi}(x,\vec{v})\,] & =
      \eval[]{\phi}([x],[\vec{v}]) \\
      & \leq \eval[]{\phi}([y],[\vec{v}])
      \\
      & = [\,\eval[]{\phi}(y,\vec{v})\,] = [\Pf(y)].
    \end{align*}
    Yet, the inclusion $[\Pf(x)] \leq [\Pf(y)]$ in $H'$ means that
    $\Pf(x) \impl \Pf(y)$ is in the principal filter generated by $x
    \impl y$, that is, that equation \eqref{eq:strongenriched} from
    Lemma~\ref{lemma:strong} holds.  
  \end{proof}
\end{longversion}
\begin{proof}
  Recall that the replacement Lemma holds in the \Ipc:
  $z \leftrightarrow w \cons \phi(z) \biimpl \phi(w)$.  Substituting
  $y$ for $z$ and $x \land y$ for $w$, and considering that
  $x \cons y \biimpl (x \land y) $, we derive that
  $x \cons \phi(y) \biimpl \phi(x \land y)$.  The latter relation is
  equivalent to the conjunction of
  $x \land \phi(y) \cons x \land \phi(x \land y)$ and
  $x \land \phi(x \land y) \cons x \land \phi(y)$. These two
  relations immediately imply that equation~\eqref{eq:compatible}
holds  when
  $\Pf$ is a polynomial.
\end{proof}

On the way let us include the following Lemma.
\begin{lemma}
  \label{lemma:idempotency}
  If $\Pf : H \rto H$ is a strong monotone function and $a \in H$,
  then
  \begin{align}
    \label{eq:idempotency}    
    a \impl \Pf(a \impl x)
    & = a \impl \Pf(x)\,.
  \end{align}
\end{lemma}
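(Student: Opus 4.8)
The plan is to prove the two inequalities $a \impl \Pf(a \impl x) \leq a \impl \Pf(x)$ and $a \impl \Pf(x) \leq a \impl \Pf(a \impl x)$ separately, using the characterization of strength given in the Remark, namely condition~\eqref{eq:strongimpl}, $\Pf(u \impl v) \leq u \impl \Pf(v)$, together with monotonicity of $\Pf$ and the basic Heyting-algebra facts $a \land (a \impl x) = a \land x \leq x$ and the residuation adjunction $b \leq a \impl c \iff a \land b \leq c$.

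For the inequality $a \impl \Pf(x) \leq a \impl \Pf(a \impl x)$: since $x \leq a \impl x$ (as $a \land x \leq x$, so $x \leq a \impl x$ by residuation — actually more directly $x \leq a \impl x$ because $a \wedge x \le x$), monotonicity of $\Pf$ gives $\Pf(x) \leq \Pf(a \impl x)$, hence $a \impl \Pf(x) \leq a \impl \Pf(a \impl x)$ by monotonicity of $a \impl (-)$. For the reverse inequality $a \impl \Pf(a \impl x) \leq a \impl \Pf(x)$: apply strength in the form~\eqref{eq:strongimpl} with $u := a$ and $v := x$ to get $\Pf(a \impl x) \leq a \impl \Pf(x)$; then, since $a \impl \Pf(x) \leq \top$... more usefully, I would note that from $\Pf(a \impl x) \leq a \impl \Pf(x)$ one gets $a \impl \Pf(a \impl x) \leq a \impl (a \impl \Pf(x))$, and then use the Heyting identity $a \impl (a \impl y) = a \impl y$ (equivalently $(a \land a) \impl y = a \impl y$, or currying $a \wedge a = a$) to conclude $a \impl (a \impl \Pf(x)) = a \impl \Pf(x)$, which finishes this direction.

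Combining the two inequalities yields the desired equality~\eqref{eq:idempotency}. The only mildly delicate point is making sure strength is invoked in the right guise: the Remark states that for monotone $\Pf$, compatibility is equivalent to~\eqref{eq:strongconj}, which in turn is equivalent to~\eqref{eq:strongimpl} and~\eqref{eq:strongenriched}; so I am entitled to use~\eqref{eq:strongimpl} directly. I do not expect any real obstacle here — the proof is a short chain of residuation manipulations — but if one wanted to avoid quoting the Kock equivalences, an alternative is to argue entirely from~\eqref{eq:compatible}: meet both sides of~\eqref{eq:idempotency} with $a$ and show $a \land \Pf(a \impl x) = a \land \Pf(x)$ using $a \land (a \impl x) = a \land x$ and compatibility, then recover the implication form by a general fact (valid in any Heyting algebra) that $a \impl u = a \impl w$ whenever $a \land u = a \land w$. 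Either route is routine; I would present the first one for brevity.
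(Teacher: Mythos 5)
Your proof is correct and is essentially the paper's own argument: one direction via \eqref{eq:strongimpl} followed by the Heyting identity $a \impl (a \impl y) = a \impl y$, the other via $x \leq a \impl x$ and monotonicity. No issues.
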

\begin{proof}
  Using \eqref{eq:strongimpl}, we deduce
  \begin{align*}
    a \impl \Pf(a \impl x) & \leq a \impl (a \impl
    \Pf(x))  = a \impl \Pf(x)\,.
  \end{align*}
  The converse  relation follows from $x \leq a \impl x$ and
  $a \impl \Pf(x)$ being monotone in $x$.
\end{proof}

\begin{proposition}
  \label{prop:fixrimpl}
  If $\Pf$ is a strong monotone function on $H$ and $a \in H$, then
  \begin{align}
    \label{eq:fixrimpl-conj}
    \MuP{a \impl \Pf} & := a \impl \Mu{\Pf}\,,
    &
    \MuP{a \land \Pf} & := a \land \Mu{\Pf}\,.
  \end{align}
\end{proposition}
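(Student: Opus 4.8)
The plan is to handle the two identities separately, in each case exhibiting the element on the right directly as the least prefixed point of the monotone endofunction on the left. Since in this paper $\mu$ denotes the least prefixed point, this establishes at once that the fixed point exists and has the stated value; as the colon notation demands, I assume throughout that $\Mu{\Pf}$ exists.

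For the first identity set $g(x) = a \impl \Pf(x)$. The element $a \impl \Mu{\Pf}$ is a fixed point of $g$: applying Lemma~\ref{lemma:idempotency} (namely~\eqref{eq:idempotency} with $x = \Mu{\Pf}$) gives $g(a \impl \Mu{\Pf}) = a \impl \Pf(a \impl \Mu{\Pf}) = a \impl \Pf(\Mu{\Pf}) = a \impl \Mu{\Pf}$. It is also below every prefixed point of $g$: if $a \impl \Pf(p) \leq p$ then, since $\Pf(p) \leq a \impl \Pf(p)$, we get $\Pf(p) \leq p$, hence $\Mu{\Pf} \leq p$; monotonicity of $\Pf$ together with $\Pf(\Mu{\Pf}) = \Mu{\Pf}$ gives $\Mu{\Pf} \leq \Pf(p)$, and therefore $a \impl \Mu{\Pf} \leq a \impl \Pf(p) \leq p$. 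So $a \impl \Mu{\Pf}$ is the least prefixed point of $g$, which is the claim.

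For the second identity set $g(x) = a \land \Pf(x)$. That $a \land \Mu{\Pf}$ is a fixed point of $g$ is immediate from compatibility~\eqref{eq:compatible}: $g(a \land \Mu{\Pf}) = a \land \Pf(a \land \Mu{\Pf}) = a \land \Pf(\Mu{\Pf}) = a \land \Mu{\Pf}$. For the lower-bound half, let $p$ satisfy $a \land \Pf(p) \leq p$, equivalently $\Pf(p) \leq a \impl p$ by residuation. The key point is to move from $p$ to $a \impl p$: using strongness in the form~\eqref{eq:strongimpl} together with monotonicity of $x \mapsto a \impl x$, we obtain $\Pf(a \impl p) \leq a \impl \Pf(p) \leq a \impl (a \impl p) = a \impl p$, so $a \impl p$ is a prefixed point of $\Pf$; hence $\Mu{\Pf} \leq a \impl p$ and $a \land \Mu{\Pf} \leq a \land (a \impl p) = a \land p \leq p$. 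Thus $a \land \Mu{\Pf}$ is the least prefixed point of $g$.

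The one step that is not a routine Heyting-algebra manipulation is the lower bound in the meet case: a prefixed point $p$ of $x \mapsto a \land \Pf(x)$ need not be a prefixed point of $\Pf$, because $\Pf(p)$ may stick out above $a$; the remedy is to replace $p$ by $a \impl p$, and it is exactly strongness of $\Pf$ that pulls $a \impl p$ back into $\Pref_{\Pf}$. The remaining ingredients --- the identities $a \impl (a \impl x) = a \impl x$ and $a \land (a \impl x) = a \land x$, the residuation adjunction $a \land y \leq z \iff y \leq a \impl z$, and the equivalence of the several forms of strongness --- are standard or already recorded in the excerpt. (Both identities can alternatively be extracted from the Rolling equation~\eqref{eq:rolling} applied to $x \mapsto a \impl x$, resp.\ $x \mapsto a \land x$, but this detour still needs the same observation about $\Pref_{\Pf}$, so the direct argument is preferable.)
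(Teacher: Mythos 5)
Your proof is correct and follows essentially the same route as the paper's: in each case you exhibit the right-hand side as the least prefixed point of the function on the left, using Lemma~\ref{lemma:idempotency} for the fixed-point half of the implication case, and strongness in the form~\eqref{eq:strongimpl} to show $a \impl p$ is a prefixed point of $\Pf$ in the meet case. The only cosmetic difference is that for the meet case you verify the (pre)fixed-point property via compatibility~\eqref{eq:compatible} to get an exact fixed point, where the paper settles for showing $a \land \Mu{\Pf}$ is a prefixed point of $a \land \Pf(-)$; both are fine.
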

\begin{proof}
  Firsty, we argue that the equation on the left holds. To this end,
  let us set $\fea(x) \eqdef a \impl \Pf(x)$.  From $\Pf \leq \fea$ we
  have $\Pref_{\fea} \subseteq \Pref_{\Pf}$. Thus, if
  $p \in \Pref_{\fea}$, then $\Mu{\Pf} = \Pf(\Mu{\Pf}) \leq \Pf(p)$ and
  $a \impl\Mu{\Pf \leq a \impl \Pf(p)} = \fea(p) \leq p$. That is,
  $a \impl\Mu{\Pf} $ is below any element of $\Pref_{\fea}$. To obtain
  the proposition, we need to argue that $a \impl\Mu{\Pf }$ belongs to
  $\Pref_{\fea}$.  This follows from equation~\eqref{eq:idempotency}:
  $\fea(a \impl\Mu{\Pf }) = a \impl \Pf(a \impl \Mu{\Pf }) = a \impl
  \Pf(\Mu{\Pf }) = a \impl \Mu{\Pf }$.

  \smallskip 

  Let us come now to the equation on the right, for which we set
  $\fla(x) \eqdef a \land \Pf(x)$.  Suppose $a \land \Pf(p) \leq p$, so
  $\Pf(p) \leq a \impl p$. Then
  $\Pf(a \impl p) \leq a \impl \Pf(p) \leq a \impl p$, using
  \eqref{eq:strongimpl}, whence $\Mu{\Pf} \leq a \impl p$ and
  $a \land\Mu{\Pf} \leq p$.
  Thus we are left to argue that $a \land \Mu{\Pf}$ is a \pfp of
  $\fla$. Yet, this is true for an arbitrary \pfp $p$ of $\Pf$: $a
  \land \Pf(a \land p) \leq a \land \Pf(p) \leq a \land p$. 
\end{proof}

\begin{corollary}
  \label{cor:distrconj}
  For each $n \geq 1$ and each collection $\Pf_{i}$, $i = 1,\ldots ,n$
  of monotone polynomials, we have the following distribution law:
  \begin{align}
    \mu_{x}. \bigwedge_{i = 1,\ldots ,n} \Pf_{i}(x) & := \bigwedge_{i =
      1,\ldots ,n} \mu_{x}.\Pf_{i}(x)\,.
    \label{eq:muconjunction}
  \end{align}
\end{corollary}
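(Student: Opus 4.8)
The plan is to reduce the corollary to Proposition~\ref{prop:fixrimpl} by an induction on $n$, using the associativity of conjunction together with the two facts already established: that polynomials on a \Ha are strong (Proposition~\ref{prop:Peirce}), and that for a strong monotone $\Pf$ and a constant $a$ one has $\MuP{a \land \Pf} := a \land \Mu{\Pf}$ (the right-hand equation of Proposition~\ref{prop:fixrimpl}).

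For the base case $n = 1$ there is nothing to prove. For the inductive step, I would write $\bigwedge_{i=1}^{n+1}\Pf_i(x) = \Pf_{n+1}(x) \land \bigwedge_{i=1}^{n}\Pf_i(x)$ and first apply the induction hypothesis to the inner conjunction, so that $\mu_x.\bigwedge_{i=1}^n\Pf_i(x)$ exists and equals $\bigwedge_{i=1}^n \mu_x.\Pf_i(x)$. The subtlety is that Proposition~\ref{prop:fixrimpl} is stated with a \emph{constant} $a$, whereas here both conjuncts depend on $x$; so I would instead invoke it with the strong monotone function $\Pf_{n+1}$ in the role of $\Pf$ and treat $\bigwedge_{i=1}^n \Pf_i(x)$ as defining, via the parametrized-\lfp machinery from Section~\ref{sec:notation}, a monotone map — more precisely I would apply the fixed-point calculus. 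The clean route is: the map $G(x,y) = \Pf_{n+1}(y) \land \bigwedge_{i=1}^n \Pf_i(x)$ is monotone in $(x,y)$; by Proposition~\ref{prop:fixrimpl} (right equation, with $a = \bigwedge_{i=1}^n \Pf_i(x)$ a constant once $x$ is fixed) we get $\mu_y.G(x,y) := (\bigwedge_{i=1}^n \Pf_i(x)) \land \mu_y.\Pf_{n+1}(y) = (\bigwedge_{i=1}^n \Pf_i(x)) \land \mu.\Pf_{n+1}$, which does not depend on $x$; then \eqref{eq:diag} gives $\mu_x.\bigwedge_{i=1}^{n+1}\Pf_i(x) = \mu_x.G(x,x) := \mu_x.\mu_y.G(x,y) = \mu_x.\bigl((\bigwedge_{i=1}^n \Pf_i(x)) \land \mu.\Pf_{n+1}\bigr)$, and a final application of Proposition~\ref{prop:fixrimpl} with the constant $a = \mu.\Pf_{n+1}$ together with the induction hypothesis closes the computation.

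The main obstacle is bookkeeping of \emph{existence} rather than any real mathematical difficulty: since the posets are not assumed complete, every step above is an assertion of the form ``if the right-hand \lfp exists then so does the left-hand one'', and the $:=$ notation must be threaded carefully so that the chain of equalities genuinely propagates existence from $\bigwedge_{i=1}^n \mu_x.\Pf_i(x)$ and $\mu.\Pf_{n+1}$ back to $\mu_x.\bigwedge_{i=1}^{n+1}\Pf_i(x)$. One has to check that each hypothesis needed by \eqref{eq:diag} (namely that $\mu_y.G(x,y)$ exists for each $x$, and that $\mu_x.\mu_y.G(x,y)$ exists) is delivered by the previous line. A slicker alternative that avoids \eqref{eq:diag} altogether is to observe directly that $\bigwedge_{i=1}^n \mu_x.\Pf_i(x)$ is simultaneously a prefixed point of each $\Pf_i$ hence of $\bigwedge_i \Pf_i$, and conversely is below any prefixed point of $\bigwedge_i \Pf_i$ because — using strongness of each $\Pf_i$ exactly as in the proof of the right equation of Proposition~\ref{prop:fixrimpl} — any such $p$ satisfies $\Pf_i(p) \le \bigwedge_j \Pf_j(p) \wedge \text{(the other conjuncts)} $, wait, more carefully: if $\bigwedge_j \Pf_j(p) \le p$ then $\Pf_i(p) \le p$ need not hold, so this direct argument still routes through strongness, and I expect it to essentially reconstruct the inductive argument; I would therefore present the inductive proof via Proposition~\ref{prop:fixrimpl} as the cleanest option.
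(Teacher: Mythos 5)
Your proposal is correct and follows essentially the same route as the paper's proof: induction on $n$, splitting off $\Pf_{n+1}$, separating the two occurrences of $x$ via \eqref{eq:diag}, and then applying the right-hand equation of Proposition~\ref{prop:fixrimpl} twice (once with $x$ fixed so that $\bigwedge_{i\le n}\Pf_i(x)$ acts as a constant, once with the constant $\mu.\Pf_{n+1}$) before invoking the induction hypothesis. Your remark that the ``slicker'' prefixed-point argument fails because $\bigwedge_j\Pf_j(p)\le p$ does not give $\Pf_i(p)\le p$ is also well taken; the inductive computation is indeed the right choice.
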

\begin{proof}
  For $n = 1$ there is nothing to prove. We suppose therefore that the
  statement holds for every collection of size $n \geq 1$, and prove it holds
  for a collection of size $n+1$. We have
  \begin{align*}
    \mu_{x}.(\Pf_{n+1}(x) \land \bigwedge_{i=1,\ldots ,n} \Pf_{i}(x)) &
    := \mu_{x}.\mu_{y}.(\Pf_{n+1}(y) \land \bigwedge_{i=1,\ldots
      ,n} \Pf_{i}(x)), \tag*{by \eqref{eq:diag},}\\
      & := \mu_{x}.( (\mu_{y}.\Pf_{n+1}(y)) \land \bigwedge_{i=1,\ldots
        ,n} \Pf_{i}(x)), \tag*{by~\eqref{eq:fixrimpl-conj},}\\
      & := (\mu_{y}.\Pf_{n+1}(y)) \land \mu_{x}.(\bigwedge_{i=1,\ldots
        ,n} \Pf_{i}(x)), \tag*{again by~\eqref{eq:fixrimpl-conj},}\\
      & := (\mu_{y}.\Pf_{n+1}(y)) \land \bigwedge_{i=1,\ldots ,n}
      \mu_{x}.\Pf_{i}(x), \tag*{by the IH. \quad \qedhere}
  \end{align*}
\end{proof}

The elimination of \gfp{s} is easy for strong monotone functions. We
are thankful to a referee of \cite{fossacs} for pointing out the
following fact, which greatly simplified our original argument:
\begin{proposition}
  \label{prop:phitop}
  If $\Pf : L \rto L$ is any strong monotone function on a bounded
  lattice $L$, then $\Pf^{2}(\top) = \Pf(\top)$. Thus $\Pf(\top)$ is the
  \gfp of $\Pf$.
\end{proposition}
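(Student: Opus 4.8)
The plan is to establish the single identity $\Pf^{2}(\top) = \Pf(\top)$ directly from the strength inequality~\eqref{eq:strongconj}, and then observe that this identity already forces $\Pf(\top)$ to be the greatest fixed point. For the second, easier half: once we know $\Pf(\Pf(\top)) = \Pf(\top)$, the element $\Pf(\top)$ is a fixed point of $\Pf$; and if $p$ is any other fixed point, then $p \leq \top$ gives $p = \Pf(p) \leq \Pf(\top)$ by monotonicity, so $\Pf(\top)$ dominates every fixed point and is therefore the $\gfp$.

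So the real content is the identity $\Pf^{2}(\top) = \Pf(\top)$. One inequality is free: $\Pf(\top) \leq \top$, hence $\Pf^{2}(\top) = \Pf(\Pf(\top)) \leq \Pf(\top)$ by monotonicity. For the reverse inequality $\Pf(\top) \leq \Pf(\Pf(\top))$, the idea is to feed $\top$ and $\Pf(\top)$ into the strength inequality in the right way. Taking $x = \Pf(\top)$ and $y = \top$ in $x \land \Pf(y) \leq \Pf(x \land y)$ yields $\Pf(\top) \land \Pf(\top) \leq \Pf(\Pf(\top) \land \top)$, i.e.\ $\Pf(\top) \leq \Pf(\Pf(\top)) = \Pf^{2}(\top)$, since $\Pf(\top) \land \top = \Pf(\top)$ and the left side collapses by idempotency of meet. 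Combining the two inequalities gives the identity.

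One subtlety worth flagging: the statement is phrased for a bounded lattice $L$ rather than a Heyting algebra, so I would make sure the argument only uses the lattice operations $\land$, the top element $\top$, monotonicity of $\Pf$, and the inequality~\eqref{eq:strongconj} itself — not any of the Heyting-specific reformulations~\eqref{eq:strongimpl} or~\eqref{eq:strongenriched}, which presuppose the residual $\impl$. The proof above indeed uses only $\land$ and $\top$, so this causes no difficulty; it is merely a matter of not reaching for the wrong lemma. I do not foresee a genuine obstacle here: the whole proof is two one-line inequalities plus the remark that an idempotent value of a monotone map is its largest fixed point. If anything, the only thing to be careful about is the direction in which~\eqref{eq:strongconj} is applied — it must be instantiated with the \emph{candidate fixed point} in the $x$ slot and $\top$ in the $y$ slot, not the other way around.
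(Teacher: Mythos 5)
Your proof is correct and is essentially the paper's own argument: the paper's one-line proof is exactly the instantiation $\Pf(\top) = \Pf(\top) \land \Pf(\top) \leq \Pf(\Pf(\top) \land \top) = \Pf^{2}(\top)$ of the strength inequality, with the converse inequality and the \gfp{} conclusion left implicit. Your extra care about using only \eqref{eq:strongconj} (and not the Heyting-specific reformulations) in a bare bounded lattice is well placed and matches the paper's intent.
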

\begin{proof} Indeed, we have
$\Pf(\top) = \Pf(\top) \land \Pf(\top) \leq \Pf(\Pf(\top) \land \top) =
\Pf^{2}(\top)$.
\end{proof}

\section{Bisimulation quantifiers and \fp{s}}
\label{sec:quantifiers}
\label{sec:gfp}

The connection between \efp{s} and bisimulation quantifiers, firstly
emphasized in \cite{DAgostinoHollenberg2000}, was a main motivation to
develop this research. Although in the end the elimination procedure
does not rely on 
it, we nevertheless want to have a
closer look at this connection. 
It was discovered in \cite{Pitts92} that \Ipc has the uniform
interpolation property. 
As it is clear from the title of that work, 
this property amounts to an internal existential and universal
quantification. This result was further refined in
\cite{GhilardiZawadowski97} to show that any morphism between finitely
presented \Ha{s} has a left and a right adjoint.

We shall be interested in \Ha{s} $H[x]$ of polynomials with
coefficients 
in $H$, and 
in particular 
mappings
from $H[x]$ to $H$, namely the left and
right adjoints to the
inclusion of $H$ into $H[x]$. The algebra of polynomials $H[x]$ is
formally defined as the coproduct (in the category of \Ha{s}) of $H$
with the free \Ha on one generator. The universal property of the
coproduct yields that \emph{for every $h_{0} \in H$ there exists a
  unique morphism $\eval[h_{0}/x]{\cdot} : H[x] \rto H$ such that
  $\eval[h_{0}/x]{x} = h_{0}$ and $\eval[h_{0}/x]{h} = h$, for each
  $h \in H$}. Thus, for $\Pf \in H[x]$ and $h \in H$, we can define
the result of evaluating $f$ at $h$ by
$\Pf(h) \eqdef \eval[h/x]{\Pf}$.
If $H$ is finitely generated, then the correspondence sending $h$ to
$\Pf(h) = \eval[h/x]{\Pf}$ is a polynomial on $H$, as defined in
Definition~\ref{def:monotonepol}; moreover, every polynomial in $H$ 
arises from some $\Pf \in H[x]$ in this way.

It was proved in \cite{GhilardiZawadowski97} that if $H$ is finitely
presented, then the canonical inclusion $i_{x} : H \rto H[x]$ has both
adjoints $\exists_{x} ,\forall_{x} : H[x] \rto H$, with $\exists_{x}$
is left adjoint to $i_{x}$ and $\forall_{x}$ is right adjoint to
$i_{x}$. This means that, for each $f \in H[x]$ and $h \in H$, the
following equivalences hold:
\begin{align}
  \exists_{x}.f \leq h & \quad\tiff\quad f \leq i_{x}(h)\,, &
  h \leq \forall_{x}.f & \quad\tiff\quad i_{x}(h) \leq f\,.
  \label{eq:adjoints}
\end{align}
From these relations the unit relation for $\exists_{x}$ and the
counit relation for $\forall_{x}$ are easily derived:
\begin{align}
  \label{eq:unitPol}
  \Pf & \leq i_{x}(\exists_{x}.\Pf) \,,
  &
  i_{x}(\forall_{x}.\Pf) & \leq \Pf\,,
  \qquad \text{for all $\Pf \in H[x]$}\,.
\end{align}
We shall use in the rest of this section a standard informal notation:
we write $f(x)$ for $f \in H[x]$ and identify $h \in H$ with the
constant polynomial $i_{x}(h) \in H[x]$. Using these conventions, the
inequalities in \eqref{eq:unitPol} are written respectively as
$\Pf(x) \leq \exists_{x}.\Pf(x)$ and $\forall_{x}.\Pf(x) \leq \Pf(x)$.
We say that $\Pf \in H[x]$ is monotone if the evaluation function it
gives rise is monotone, that is, if
$\eval[h_{0}/x]{\Pf} \leq \eval[h_{1}/x]{\Pf}$ whenever
$h_{0} \leq h_{1}$.
\begin{proposition}
  \label{prop:gfpFromEx}
  If $\Pf$ is a monotone polynomial on a finitely presented Heyting
  algebra, then
  \begin{align}
    \label{eq:nuexistsone}
    \nu.\Pf & := \exists_{x}.( x \land (x \impl \Pf(x)))\,.
  \end{align}
\end{proposition}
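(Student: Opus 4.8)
The plan is to reduce the statement to the characterization $\nu.\Pf = \Pf(\top)$, which is already available here: since $\Pf$ is a polynomial it is monotone and strong by Proposition~\ref{prop:Peirce}, so Proposition~\ref{prop:phitop} gives $\Pf^{2}(\top) = \Pf(\top)$, whence $\nu.\Pf = \Pf(\top)$. It then remains to prove the equality $\exists_{x}.(x \land (x \impl \Pf(x))) = \Pf(\top)$ in $H$. I would work throughout inside the Heyting algebra $H[x]$, first using the Heyting identity $x \land (x \impl y) = x \land y$ (valid in any Heyting algebra, applied with $y = \Pf(x) \in H[x]$) to rewrite the argument of $\exists_{x}$ as $x \land \Pf(x)$, and then establishing the two inequalities separately from the adjunction~\eqref{eq:adjoints} and the unit inequality~\eqref{eq:unitPol} for $\exists_{x}$.

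For $\exists_{x}.(x \land \Pf(x)) \leq \Pf(\top)$, by the left adjunction in~\eqref{eq:adjoints} it suffices to check $x \land \Pf(x) \leq i_{x}(\Pf(\top))$ in $H[x]$. Viewing $\Pf$ as a polynomial over the Heyting algebra $H[x]$ in a fresh variable, it is compatible by Proposition~\ref{prop:Peirce} applied to $H[x]$; instantiating compatibility at $x$ and $\top$ gives $x \land \Pf(\top) = x \land \Pf(x \land \top) = x \land \Pf(x)$, hence $x \land \Pf(x) = x \land \Pf(\top) \leq \Pf(\top)$.

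For the converse $\Pf(\top) \leq \exists_{x}.(x \land \Pf(x))$, I would apply the evaluation morphism $\eval[\top/x]{\cdot} : H[x] \rto H$ to the unit inequality $x \land \Pf(x) \leq i_{x}\bigl(\exists_{x}.(x \land \Pf(x))\bigr)$. Since $\eval[\top/x]{\cdot}$ is a Heyting morphism with $\eval[\top/x]{i_{x}(h)} = h$ for $h \in H$, the right-hand side becomes $\exists_{x}.(x \land \Pf(x))$, while the left-hand side becomes $\top \land \Pf(\top) = \Pf(\top)$. Combining the two inequalities with $\nu.\Pf = \Pf(\top)$ yields the proposition.

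The only point requiring care is the compatibility step: compatibility of $\Pf$ over $H$ is not literally what is used, since the instance invoked puts $x$ itself — an element of $H[x]$ rather than a coefficient — into the polynomial; what makes it legitimate is that a polynomial over $H$ is a fortiori a polynomial over $H[x]$, so Proposition~\ref{prop:Peirce} applies verbatim over $H[x]$. Beyond this bookkeeping, the argument is a short formal manipulation of the quantifier calculus, and I do not anticipate any genuine obstacle.
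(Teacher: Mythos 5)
Your proof is correct, but it runs in the opposite logical direction from the paper's. The paper proves \eqref{eq:nuexistsone} directly from the adjunction: evaluating the unit inequality at an arbitrary \pofp $p$ (where $p \land (p \impl \Pf(p)) = p$) shows that $\exists_{x}.(x \land (x \impl \Pf(x)))$ dominates every \pofp, and then strongness of $\Pf$ together with the adjunction shows that this element is itself a \pofp; no appeal is made to Proposition~\ref{prop:phitop}. You instead import $\nu.\Pf = \Pf(\top)$ from Propositions~\ref{prop:Peirce} and~\ref{prop:phitop} and check that the quantified expression collapses to $\Pf(\top)$ --- which is legitimate and not circular, since Proposition~\ref{prop:phitop} is established earlier by the one-line computation $\Pf(\top) \leq \Pf^{2}(\top)$, independently of anything in Section~\ref{sec:quantifiers}. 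In effect you have read the paper's Corollary~\ref{cor:phitop} backwards: the chain $\exists_{x}.(x \land (x \impl \Pf(x))) = \exists_{x}.(x \land \Pf(x)) = \Pf(\top)$ is exactly the computation the paper uses to \emph{derive} the corollary \emph{from} the proposition, and your steps 2b--2c supply the justification for the ``it is easy to see'' claim $\exists_{x}.\Pf = \Pf(\top)$ there. What the paper's route buys is that it realizes the stated purpose of the section --- inferring the \emph{existence} of \gfp{s} from the bisimulation quantifiers alone, in parallel with Proposition~\ref{prop:lfpFromAll} for \lfp{s}, where no shortcut like $\Pf(\top)$ is available; what your route buys is brevity and the avoidance of the postfixed-point verification. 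Your cautionary remark about applying compatibility inside $H[x]$ is well taken and handled correctly; note that the paper's own proof makes the same implicit move when it invokes strongness of $\Pf$ for the element $x \land (x \impl \Pf(x))$ of $H[x]$.
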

\begin{proof}
  By the unit relation in \eqref{eq:unitPol}
  $ x \land (x \impl \Pf(x)) \leq \exists_{x}.(x \land (x \impl \Pf(x)))$.
  Recall that evaluation at $p \in H$ is a \Ha morphism, thus it is
  monotone. Therefore, if $p \in H$ is a \pofp of $\Pf$, then by
  evaluating the previous inequality at $p$, we have
  \begin{align*}
    p & = p\land  (p \impl \Pf(p)) \leq  \exists_{x}.(x \land (x \impl
    \Pf(x)))\,,
  \end{align*}
  so that $\exists_{x}.(x \land (x \impl \Pf(x)))$ is greater than any
  \pofp of $\Pf$. Let us show that
  $\exists_{x}.(x \land (x \impl \Pf(x)))$ is also a \pofp. In view of
  \eqref{eq:adjoints} it will be enough to argue that
  $x \land x \impl \Pf(x) \leq \Pf(\exists_{x}.(x \land x \impl
  \Pf(x)))$ in $H[x]$. We compute as follows:
  \begin{align*}
    x \land
  (x \impl \Pf(x))
  & \leq \Pf(x) \land (x \impl \Pf(x)) \\
  & \leq \Pf(x \land (x \impl \Pf(x))), \tag*{since $\Pf$ is strong, by \eqref{eq:strongconj},} \\
  & \leq \Pf(\exists_{x}.(x \land (x \impl \Pf(x)))), \tag*{since
    $\Pf$ is monotone. \qedhere} 
  \end{align*}
\end{proof}

In a similar fashion, we can construct \lfp{s} of monotone polynomials
using this time \emph{universal} bisimulation quantifiers.
\begin{proposition}
  \label{prop:lfpFromAll}
  If $\Pf$ is a monotone polynomial on a finitely presented Heyting
  algebra, then
  \begin{align*}
    \mu.\Pf & := \forall_{x}.((\Pf(x) \impl x) \impl x)\,.
  \end{align*}
\end{proposition}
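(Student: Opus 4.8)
The plan is to mirror the proof of Proposition~\ref{prop:gfpFromEx}, exploiting the counit relation for $\forall_{x}$ in place of the unit relation for $\exists_{x}$, and the dual reading of the fixed-point order. Write $\mathsf{m} \eqdef \forall_{x}.((\Pf(x) \impl x) \impl x)$; I want to show $\mathsf{m}$ is the least prefixed point of $\Pf$, i.e. the least $p \in H$ with $\Pf(p) \leq p$.

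First I would show $\mathsf{m}$ lies below every prefixed point of $\Pf$. By the counit relation in \eqref{eq:unitPol}, $\forall_{x}.((\Pf(x) \impl x) \impl x) \leq (\Pf(x) \impl x) \impl x$ in $H[x]$. Since evaluation at any $p \in H$ is a \Ha morphism, hence monotone and commuting with $\impl$, evaluating this inequality at a prefixed point $p$ (so $\Pf(p) \leq p$, i.e. $\Pf(p) \impl p = \top$) yields $\mathsf{m} \leq (\Pf(p) \impl p) \impl p = \top \impl p = p$. So $\mathsf{m}$ is a lower bound of $\Pref_{\Pf}$.

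The main work is to show $\mathsf{m}$ is itself a prefixed point, i.e. $\Pf(\mathsf{m}) \leq \mathsf{m}$. By the adjunction \eqref{eq:adjoints}, since $\mathsf{m} = \forall_{x}.((\Pf(x)\impl x)\impl x)$, proving $i_{x}(\Pf(\mathsf{m})) \leq i_{x}(\mathsf{m})$ would follow from $i_x(\Pf(\mathsf m)) \le (\Pf(x)\impl x)\impl x$ in $H[x]$ — but $\Pf(\mathsf m)$ is a constant while the other side genuinely depends on $x$, so I cannot quite argue that way directly. Instead I expect the clean route is: it suffices to show $i_{x}(\Pf(\mathsf m))$ is $\le (\Pf(x)\impl x)\impl x$, and for this, working in $H[x]$, I would use that $\Pf$ is strong. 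Set $a \eqdef \Pf(x)\impl x$. Then $a \land x \le x$ trivially and $a\land\Pf(x)\le x$ by definition of $a$; I want to push $a$ inside $\Pf$. Using strongness \eqref{eq:strongimpl} in the form $\Pf(a\impl y)\le a\impl\Pf(y)$, or rather Lemma~\ref{lemma:idempotency}, one gets $a\impl\Pf(a\impl x) = a\impl\Pf(x)\le a\impl(a\impl x)=a\impl x$... The cleanest computation I anticipate: since $\mathsf m \le a\impl x$ holds in $H[x]$ — indeed $\forall_x.((\Pf(x)\impl x)\impl x)\le (\Pf(x)\impl x)\impl x = a\impl x$ by the counit — monotonicity of $\Pf$ gives $\Pf(\mathsf m)\le \Pf(a\impl x)$, and then $a\impl\Pf(\mathsf m)\le a\impl\Pf(a\impl x)= a\impl\Pf(x)\le a\impl(a\impl x)=a\impl x$ by Lemma~\ref{lemma:idempotency} and the definition of $a$. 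Rearranging, $a\land\Pf(\mathsf m)\le x$, i.e. $\Pf(\mathsf m)\le (\Pf(x)\impl x)\impl x$ in $H[x]$; but $\Pf(\mathsf m)$ is constant, so by \eqref{eq:adjoints} $\Pf(\mathsf m)\le \forall_x.((\Pf(x)\impl x)\impl x)=\mathsf m$, as desired.

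The step I expect to be the main obstacle is the careful bookkeeping of which quantities are constants in $H$ versus genuine polynomials in $H[x]$, and making sure the application of the adjunction \eqref{eq:adjoints} is legitimate — the adjunction transposes an inequality $i_x(h)\le f$ in $H[x]$ into $h\le\forall_x.f$ in $H$, and I must verify $\Pf(\mathsf m)$ really is (the image under $i_x$ of) an element of $H$ before transposing. Once that is pinned down, the computation is just Lemma~\ref{lemma:idempotency} together with monotonicity and the counit, exactly dual to the strongness computation used in Proposition~\ref{prop:gfpFromEx}. Finally, since $\mathsf m$ is the least prefixed point and (as noted in Section~\ref{sec:notation}) least prefixed point and \lfp{} are interchangeable here, the identity $\mu.\Pf := \mathsf m$ follows, with the colon recording that the right-hand side, when it exists as a prefixed point, computes $\mu.\Pf$.
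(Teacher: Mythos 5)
Your proof is correct and follows essentially the same route as the paper's: the counit relation plus evaluation at a prefixed point gives the lower-bound half, and strongness together with the adjunction \eqref{eq:adjoints} shows the candidate is itself a prefixed point. The only cosmetic difference is that the paper applies compatibility in its meet form ($a \land \Pf(y) = a \land \Pf(a \land y)$ with $a = \Pf(x) \impl x$) where you use the residuated form via Lemma~\ref{lemma:idempotency}; these are interchangeable, and your worry about transposing the constant $\Pf(\mathsf{m})$ through the adjunction is unfounded—that is exactly what \eqref{eq:adjoints} licenses.
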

\begin{proof}
  By the counit relation in \eqref{eq:unitPol}
  $\forall_{x}.((\Pf(x) \impl x) \impl x) \leq (\Pf(x) \impl x) \impl
  x$.  
  Evaluating this relation at $p \in H$ such that
  $\Pf(p) \leq p$, we obtain
  \begin{align*}
    \forall_{x}.((\Pf(x) \impl x) \impl x)
    & \leq (\Pf(p) \impl p) \impl p = \top \impl p = p\,,
  \end{align*}
  so  $\forall_{x}.((\Pf(x) \impl x) \impl x)$ is smaller than any
  \pfp of $\Pf$.  We show next that
  $\forall_{x}.((\Pf(x) \impl x) \impl x)$ is also a \pfp of $f$ for
  which it will be enough to argue that
  $\Pf(\,\forall_{x}.((\Pf(x) \impl x) \impl x)\,) \leq (\Pf(x) \impl x)
  \impl x$ in $H[x]$ or, equivalently, that
  $(\Pf(x) \impl x) \land \Pf(\forall_{x}.((\Pf(x) \impl x) \impl x))
  \leq x$.
  We
  compute as follows:
  \ppfootnotesize
  \begin{align*}
    (\Pf(x)  \impl x) \land \Pf(\,\forall_{x}.((\Pf(x) \impl x) \impl x)\,)
    & =
    (\Pf(x)  \impl x) \land \Pf(\,(\Pf(x)  \impl x) \land
    \forall_{x}.((\Pf(x) \impl x) \impl x)\,)\,,
    \tag*{where we use that $\Pf$ is strong,}
    \\
    & \leq
    (\Pf(x) \impl x) \land \Pf(x) \leq x 
  \end{align*}
  \ppnormal
where in the last inequality we have used that $f$ is monotone and the
relation
$(f(x) \impl x) \land \forall_{x}.((\Pf(x) \impl x) \impl x) \leq x$,
equivalent to the counit relation for $(\Pf(x) \impl x) \impl x$.
\end{proof}

The next result is an immediate consequence of
Propositions~\ref{prop:Peirce} and \ref{prop:phitop}. However the
previous proposition
yields now an alternative proof:
\begin{corollary}
  \label{cor:phitop}
  If $\Pf$ is a monotone polynomial on a \Ha $H$, then
  \begin{align}
    \label{eq:nuexists}
    \nu.\Pf & := \Pf(\top)\,.
  \end{align}
\end{corollary}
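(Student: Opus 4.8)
The plan is to derive the statement from results already in place, following the two routes that the surrounding text alludes to. The direct, and essentially immediate, route: by Proposition~\ref{prop:Peirce} every polynomial on a \Ha is compatible, hence, being monotone, strong; since a \Ha is in particular a bounded lattice, Proposition~\ref{prop:phitop} applies verbatim and yields that $\Pf(\top)$ is the \gfp of $\Pf$, which is exactly the assertion $\nu.\Pf := \Pf(\top)$.

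For the alternative route---available when $H$ is finitely presented, so that $\exists_{x}$ is defined---I would start from Proposition~\ref{prop:gfpFromEx}, which gives $\nu.\Pf := \exists_{x}.(x \land (x \impl \Pf(x)))$, and then prove directly that $\exists_{x}.(x \land (x \impl \Pf(x))) = \Pf(\top)$. For ``$\leq$'': in $H[x]$ one has $x \land (x \impl \Pf(x)) = x \land \Pf(x) \leq \Pf(x) \leq \Pf(\top) = i_{x}(\Pf(\top))$, using one of the defining equations of \Ha{s}, monotonicity of $\Pf$, and $x \leq \top$; since $\exists_{x}$ is left adjoint to $i_{x}$, this gives $\exists_{x}.(x \land (x \impl \Pf(x))) \leq \Pf(\top)$. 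For ``$\geq$'': apply the unit relation $\Pf' \leq i_{x}(\exists_{x}.\Pf')$ of~\eqref{eq:unitPol} to $\Pf' = x \land (x \impl \Pf(x))$, then apply the \Ha morphism $\eval[\top/x]{\cdot} : H[x] \rto H$ to both sides; the left-hand side evaluates to $\top \land (\top \impl \Pf(\top)) = \Pf(\top)$ and the right-hand side to $\exists_{x}.(x \land (x \impl \Pf(x)))$, whence $\Pf(\top) \leq \exists_{x}.(x \land (x \impl \Pf(x)))$. Together with Proposition~\ref{prop:gfpFromEx} this settles the finitely presented case.

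I do not expect a genuine obstacle here; the only delicate point is the scope of the second argument. Proposition~\ref{prop:gfpFromEx} requires finite presentability while the corollary is stated for an arbitrary \Ha, so I would keep the first route as the official proof and offer the computation above only as an alternative derivation valid for finitely presented $H$. Should one wish to recover the general statement through quantifiers, one would observe that a polynomial involves only finitely many coefficients of $H$ and hence factors through a finitely presented algebra, but this is an unnecessary detour given Propositions~\ref{prop:Peirce} and~\ref{prop:phitop}.
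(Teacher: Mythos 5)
Your proposal is correct and matches the paper: the text preceding the corollary notes that it is an immediate consequence of Propositions~\ref{prop:Peirce} and~\ref{prop:phitop} (your first route), while the displayed proof is the quantifier computation $\nu.\Pf = \exists_{x}.(x \land (x \impl \Pf(x))) = \exists_{x}.(x \land \Pf(x)) = \top \land \Pf(\top) = \Pf(\top)$, which is your second route in condensed form (the paper uses the identity $\exists_{x}.\Pg = \Pg(\top)$ for monotone $\Pg$ where you unwind the adjunction into the two inequalities). Your handling of the finite-presentability caveat---observing that a polynomial involves finitely many coefficients and hence factors through a finitely generated (free) algebra---is also exactly how the paper resolves the scope issue in the paragraph following the corollary.
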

\begin{proof}
  It is easy to see that if $\Pf$ is a monotone polynomial on a
  finitely presented \Ha, then $\exists_{x}.\Pf = \Pf(\top)$. Thus we
  have
  \begin{align*}
    \nu.\Pf & = \exists_{x}.( x \land (x \impl \Pf(x))) = \exists_{x}.(
    x \land \Pf(x)) = \top \land \Pf(\top) = \Pf(\top)\,. \tag*{\qedhere}
  \end{align*}
\end{proof}
Let us come back to a more syntactic perspective. If $\phi(x)$ is a
\fterm positive in $x$ whose variables distinct from $x$ are among
$y_{1},\ldots ,y_{n}$, then the equality $\phi^{2}(\top) = \phi(\top)$
holds in the free \Ha on the set $\set{y_{1},\ldots ,y_{n}}$ (which is
finitely presented).  Since such a free \Ha is a subalgebra of the
\LTa, this means that $\phi(\top) \cons \phi(\phi(\top))$ and
$\phi(\phi(\top)) \cons \phi(\top)$.

\section{The elimination procedure}
\label{sec:procedure}

We present in this Section our first main result, a procedure that
both axiomatizes and eliminates \lfp{s} of the form $\mu_{x}.\phi$
with $\phi$ \fpf. Together with the axiomatization of \gfp{s} given in
Proposition~\ref{prop:phitop} and Corollary~\ref{cor:phitop}, the
procedure can be extended to a procedure to construct a \fpf formula
$\psi$ equivalent to a given formula $\chi$ of the \muIpc.  
To ease the reading of the content of this Section and of the remaing
ones, we introduce the following notation:
\begin{align*}
  \Nec[\alpha] \phi & \eqdef \alpha \impl \phi\,.
\end{align*}
When using the notation above, we shall always assume that the
special
variable $x$ does not occur in the formula $\alpha$.

\begin{definition}
  An occurrence of the variable $x$ is \emph{\stronglypositive} in a
  \fterm $\phi$ if there is no subformula $\psi$ of $\phi$ of the form
  $\psi_{0} \impl \psi_{1}$ such that $x$ is located in $\psi_{0}$. A
  \fterm $\phi$ is \emph{\stronglypositive} in the variable $x$ if
  every occurrence of $x$ is \emph{\stronglypositive} in $\phi$.
  An occurrence of a variable $x$ is \emph{\weaklynegative} in a
  \fterm $\phi$ if it is not \stronglypositive. A \fterm $\phi$ is
  \emph{\weaklynegative} in the variable $x$ if every occurrence of
  $x$ is \emph{\weaklynegative} in $\phi$.
\end{definition}
We shall also say that a variable $x$ is \stronglypositive
(resp. \weaklynegative) in a formula $\phi$ when $\phi$ is
\stronglypositive (resp. \weaklynegative) in the variable $x$.
Observe that a variable might be neither \stronglypositive nor
\weaklynegative in a \fterm.

\subsection{Summary of the procedure}
In order to compute the \lfp $\mu_{x}.\phi$, we 
take the following steps:
\begin{enumerate}
\item We rename all the \wnegative occurrences of $x$ in $\phi$ to a
  fresh variable $y$, so $\phi(x) = \psi(x,x/y)$ with $\psi$
  \stronglypositive in $x$ and \weaklynegative in $y$.
\item \emph{Computation of a normal form.} We compute a normal form of
  $\psi(x,y)$, that is, a formula equivalent to $\psi(x,y)$ which is a
  conjunction $\bigwedge_{i \in I} \psi_{i}(x,y)$ with each $\psi_{i}$
  disjunctive in $x$ (see Definition~\ref{def:disjunctive} below) 
   or not containing the variable $x$.
\item \emph{\Spositive elimination.} For each $i \in I$:
  if $x$ has an occurrence in $\psi_{i}$, we compute then a formula
  $\psi'_{i}$ equivalent to the \lfp $\mu_{x}.\psi_{i}(x,y)$ and
  observe that $\psi'_{i}$ is \weaklynegative in $y$; otherwise, we
  let $\psi'_{i} = \psi_{i}$.
  \item \emph{\Wnegative elimination}. The formula $\bigwedge_{i \in
      I} \psi'_{i}(y)$ is \wnegative in $y$; we compute a formula
    $\chi$ equivalent to $\mu_{y}.\bigwedge_{i} \psi'_{i}(y)$ and
    return it.
\end{enumerate}
The correction of the procedure relies on the following chain of
equivalences:
\begin{align*}
  \mu_{x}.\phi(x) & = \mu_{y}.\mu_{x}.\psi(x,y)\,,  \tag*{where we use \diag,}\\
  & = \mu_{y}.\mu_{x}.\bigwedge_{i \in I}\psi_{i}(x,y)
  =
  \mu_{y}.\bigwedge_{i \in I}\mu_{x}.\psi_{i}(x,y) \,,
  \tag*{using Corollary~ \ref{cor:distrconj},}
  \\& 
  = \mu_{y}.\bigwedge_{i \in I}\psi'_{i}(y) = \chi\,.
\end{align*}

\subsection{Computation of a normal form}
If a \fterm $\phi$ does not contain the variable $x$, then $x$ is both
\stronglypositive and \weaklynegative in this formula. Yet, in this
case, we have $\mu_{x}.\phi = \phi$, thus it is a trivial case for the
sake of computing its \lfp.
For this reason  we present below a grammar recognising \fullypositive
\fterms containing the variable $x$. The grammar is 
\begin{align}
  \phi & \production x \mid \Nec[\alpha] \phi \mid \beta \vee \phi
  \mid \phi \vee \phi \mid \phi \land \phi \mid \gamma \land \phi\,
    \label{grammar:fullypositive}
\end{align}
where conjunctions and disjunctions are taken up to commutativity and
where $\alpha,\beta,\gamma$ do not contain the variable $x$.  Another
key concept for the elimination procedure is the notion of disjunctive
formula, obtained by eliminating the last two productions from the
above grammar.
\begin{definition}
  \label{def:disjunctive}
  The set of \fterms that are \emph{disjunctive in the variable $x$}
  is generated by the grammar
  \begin{align}
    \phi & \production x \mid \Nec[\alpha] \phi \mid \beta \vee \phi
    \mid  \phi \vee \phi\,,
    \label{grammar:disjunctive}
  \end{align}
  where $\alpha$ and $\beta$ are formulas with no occurrence of the
  variable $x$. A \fterm $\phi$ is in \emph{normal form} (w.r.t. $x$)
  if it is a conjunction of \fterms $\phi_{i}$, $i \in I$, so that
  each $\phi_{i}$ either does not contain the variable $x$, or it is
  disjunctive in $x$.
\end{definition}
Due to equation~\eqref{eq:distrimpl} and since the usual distributive
laws hold in \Ha{s}, every \spositive \fterm is equivalent to a \fterm
in normal form, as witnessed by the following Lemma.
\begin{lemma}
  \label{lemma:CNF}
  Every \fterm  
  that is \fullypositive in $x$ and contains the variable
  $x$ is equivalent to a conjunction of disjunctive formulas and of a
  formula that does not contain $x$.
\end{lemma}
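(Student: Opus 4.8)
The plan is to proceed by structural induction on the grammar~\eqref{grammar:fullypositive} for \fullypositive \fterms containing $x$, showing that each such \fterm is provably equivalent to a conjunction $\bigwedge_{i\in I}\phi_i$ in which every $\phi_i$ is either disjunctive in $x$ (generated by~\eqref{grammar:disjunctive}) or has no occurrence of $x$. The base case $\phi=x$ is trivial: $x$ is itself disjunctive. For the inductive step I would treat the six productions in turn, maintaining the invariant ``conjunction of disjunctive-in-$x$ or $x$-free formulas''. The two genuinely easy cases are the conjunctive ones: if $\phi=\phi_1\land\phi_2$ (or $\phi=\gamma\land\phi_1$ with $\gamma$ $x$-free), then by the induction hypothesis each $\phi_j$ is already such a conjunction, and we simply concatenate the two lists of conjuncts (adjoining the single $x$-free conjunct $\gamma$ in the second case). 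So the real content is in the productions $\Nec[\alpha]\phi_1$, $\beta\vee\phi_1$, and $\phi_1\vee\phi_2$, where the outer connective must be distributed inside a conjunction to restore the normal form.

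For these I would invoke the distributive laws available in \Ha{s}. If $\phi_1\eqIpc\bigwedge_{i\in I}\chi_i$ with each $\chi_i$ disjunctive-in-$x$ or $x$-free, then:
\begin{itemize}
\item $\Nec[\alpha]\phi_1 = \alpha\impl\bigwedge_{i\in I}\chi_i \eqIpc \bigwedge_{i\in I}(\alpha\impl\chi_i)$ by~\eqref{eq:distrimpl}; now $\alpha\impl\chi_i = \Nec[\alpha]\chi_i$ is disjunctive-in-$x$ whenever $\chi_i$ is (it is exactly the second production of~\eqref{grammar:disjunctive}), and if $\chi_i$ is $x$-free then so is $\alpha\impl\chi_i$ since $x$ does not occur in $\alpha$.
\item $\beta\vee\phi_1 = \beta\vee\bigwedge_{i\in I}\chi_i \eqIpc \bigwedge_{i\in I}(\beta\vee\chi_i)$ by the distributivity of $\vee$ over $\land$; again $\beta\vee\chi_i$ is disjunctive-in-$x$ when $\chi_i$ is (third production of~\eqref{grammar:disjunctive}) and $x$-free when $\chi_i$ is.
\item $\phi_1\vee\phi_2$: writing $\phi_1\eqIpc\bigwedge_{i\in I}\chi_i$ and $\phi_2\eqIpc\bigwedge_{j\in J}\chi'_j$, distributivity gives $\phi_1\vee\phi_2\eqIpc\bigwedge_{i\in I}\bigwedge_{j\in J}(\chi_i\vee\chi'_j)$; each conjunct $\chi_i\vee\chi'_j$ is a disjunction of two formulas each disjunctive-in-$x$ or $x$-free, hence (after absorbing an $x$-free disjunct via the $\beta\vee\phi$ production, or noting that a disjunction of two $x$-free formulas is $x$-free) is itself disjunctive-in-$x$ or $x$-free.
\end{itemize}
A small bookkeeping point is that in the disjunction case one must make sure that $\phi_1\vee\phi_2$ really still contains $x$ so that the statement applies, but if, say, $\phi_2$ is $x$-free the induction hypothesis should be read with the convention that an $x$-free formula counts as an empty conjunction of disjunctive formulas together with one $x$-free conjunct; the production $\beta\vee\phi$ then handles it directly.

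I expect the main obstacle to be purely organizational rather than mathematical: one has to set up the induction hypothesis carefully so that it accommodates $x$-free subformulas (which are simultaneously \fullypositive but for which the grammar~\eqref{grammar:fullypositive} is not the one generating them), and so that the normal form is closed under the required distributions. Concretely, the cleanest formulation is: \emph{every \fullypositive \fterm (whether or not it contains $x$) is equivalent to a conjunction of formulas, each of which is either disjunctive in $x$ or does not contain $x$}, and then Lemma~\ref{lemma:CNF} is the special case where at least one conjunct contains $x$. All the distributivity steps used are standard identities valid in every \Ha --- distributivity of $\vee$ over $\land$, and the identity $x\impl(y\land z)=(x\impl y)\land(x\impl z)$ recorded in~\eqref{eq:distrimpl} --- so no new algebra is needed beyond what the excerpt already provides.
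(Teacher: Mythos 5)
Your proposal is correct and follows essentially the same route as the paper: a structural induction on the grammar~\eqref{grammar:fullypositive}, distributing $\impl$ and $\vee$ over $\land$ via~\eqref{eq:distrimpl} and lattice distributivity, with the binary-disjunction case handled by taking all cross-disjunctions and sorting them into disjunctive-in-$x$ versus $x$-free conjuncts. The paper merely packages the same induction as an explicit recursive definition of the set $\tr(\phi)$ of disjunctive conjuncts together with a single $x$-free conjunct $c(\phi)$ (which also resolves your bookkeeping concern about $x$-free subformulas), and declares the verification routine.
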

\begin{proof}
  By induction, we associate to each such formula a set $\tr(\phi)$ of
  disjunctive formulas and formula $c(\phi)$ so that
  \begin{align}
    \label{eq:CNF}
    \phi &\eqIpc c(\phi) \land \bigwedge \set{\delta \mid \delta \in \tr(\phi)}\,.
  \end{align}
  We let
  \ppfootnotesize
  \begin{align*}
    \tr(x) & \eqdef \set{x} \,, & c(x) & \eqdef \top\, ; \\
    \tr(\Nec[\alpha] \phi) & \eqdef \set{\Nec[\alpha]\delta \mid \delta \in
      \tr(\phi)}\,, &c(\Nec[\alpha] \phi) &\eqdef \Nec[\alpha]c(\phi)\,; \\
    \tr(\beta \vee \phi) & \eqdef \set{\beta \vee \delta \mid \delta \in
      \tr(\phi)}\,, & c(\beta \vee \phi)& \eqdef \beta \vee c(\phi)\,; \\
    \tr(\phi_{1} \vee \phi_{2}) & \eqdef \set{c(\phi_{1}) \vee \delta_{2}
      \mid \delta_{2} \in
      \tr(\phi_{2})}   & c(\phi_{1} \vee \phi_{2})& \eqdef c(\phi_{1}) \vee c(\phi_{2})\,;\\
    & \qquad \cup \set{c(\phi_{2}) \vee \delta_{1} \mid \delta_{1} \in
      \tr(\phi_{1})} \\
    & 
      \qquad \cup \,\set{\delta_{1} \vee \delta_{2} \mid
        \delta_{1} \in \tr(\phi_{1}), \delta_{2} \in
        \tr(\phi_{2})}\,,
      \\
    \tr(\phi_{1} \land \phi_{2}) & \eqdef \tr(\phi_{1}) \cup
    \tr(\phi_{2})\,, &
    c(\phi_{1} \land \phi_{2})& \eqdef c(\phi_{1}) \land c(\phi_{2})\,; \\
    \tr(\gamma \land \phi) & \eqdef \tr(\phi) \,, & c(\gamma \land \phi) &
    \eqdef \gamma \land c(\phi) \,.
  \end{align*}
  \ppnormal
  Verification that \eqref{eq:CNF} holds is routine.
\end{proof}

\subsection{\Spositive elimination}

We tackle here the problem of computing the \lfp $\mu_{x}.\phi$ of a
\fterm $\phi$ which is \emph{disjunctive} in $x$.
Recall that the formulas $\alpha$ and $\beta$ appearing in a parse
tree as leaves---according to the
grammar~\eqref{grammar:disjunctive}---do not contain the variable $x$.
We call such a formula $\alpha$ a \emph{head subformula} of $\phi$,
and such a $\beta$ a \emph{side subformula} of $\phi$, and thus we
put:
\ppfootnotesize
\begin{align*}
  \Head(\phi) & \eqdef \set{ \alpha \mid \alpha \text{ is a head
      subformula of } \phi } \,,
  &
  \Side(\phi) & \eqdef \set{ \beta \mid \beta \text{ is a side
      subformula of } \phi } \,.
\end{align*}
\ppnormal

Recall that a monotone function $f : P \rto P$ is \emph{inflating} if
$x \leq f(x)$, for all $x \in P$. 
\begin{lemma}
  The interpretation of a \stronglypositive disjunctive formula $\phi$
  as a function of $x$ is inflating.
\end{lemma}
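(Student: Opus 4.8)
The plan is to argue by a straightforward structural induction on the grammar~\eqref{grammar:disjunctive} that generates the disjunctive formulas. Concretely, I shall prove the syntactic statement $x \cons \phi$ for every \fterm $\phi$ disjunctive in $x$; by the soundness and completeness theorem of the \Ipc over \Ha{s}, this is precisely the assertion that for every \Ha $H$, every valuation $v$ of the free variables of $\phi$ other than $x$, and every $h \in H$, one has $h \leq \eval[(v,h/x)]{\phi}$ — that is, that the induced function $h \mapsto \eval[(v,h/x)]{\phi}$ is inflating. Working with $\cons$ rather than semantically keeps the bookkeeping light.

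For the base case $\phi = x$ there is nothing to prove. For the inductive step, assume $x \cons \psi$, and likewise $x \cons \psi_{1}$ and $x \cons \psi_{2}$ in the binary case. If $\phi = \beta \vee \psi$ or $\phi = \psi_{1} \vee \psi_{2}$, then $x \cons \phi$ is immediate: by the induction hypothesis $x$ is provably below one of the disjuncts, and a disjunct is provably below the disjunction. If $\phi = \Nec[\alpha]\psi = \alpha \impl \psi$, then by residuation the goal $x \cons \alpha \impl \psi$ is equivalent to $\alpha \land x \cons \psi$, and this follows at once from $\alpha \land x \cons x$ together with the induction hypothesis $x \cons \psi$.

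I do not expect a genuine obstacle here: the only case requiring a moment's thought is $\Nec[\alpha]\psi$, and it is dispatched by a single use of the adjunction between $z \land (-)$ and $z \impl (-)$. It is nevertheless worth recording where the shape of the grammar matters. The induced function is monotone because $x$ occurs only positively, but monotonicity plays no role in this particular argument; what is used is that conjunctions of the form $\gamma \land \phi$ are excluded from~\eqref{grammar:disjunctive}, for otherwise inflation would fail, since $\gamma \land x \leq x$ need not dominate $x$. (The hypothesis that $\phi$ is, in addition, \stronglypositive is automatic for a disjunctive formula, since in $\Nec[\alpha]\psi$ the formula $\alpha$ contains no occurrence of $x$.)
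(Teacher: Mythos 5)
Your proof is correct. The paper states this lemma without any proof, treating it as immediate; your structural induction on the grammar~\eqref{grammar:disjunctive} --- with the only non-trivial case, $\Nec[\alpha]\psi$, dispatched by residuation of $\alpha \land (-)$ against $\alpha \impl (-)$ --- is exactly the argument the authors leave implicit, and your observation that the exclusion of the productions $\gamma \land \phi$ and $\phi \land \phi$ is what makes inflation hold is accurate.
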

The key observation needed to prove Proposition~\ref{prop:mudformula}
below is the following Lemma on monotone inflating functions. In the
statement of the lemma we assume that $P$ is a join-semilattice, and
that $f \vee g$ is the pointwise join of the two functions $f$ and
$g$.
\begin{lemma}
  \label{lemma:circvee}
  If $f,g:P \rto P$ are monotone inflating functions, then $f \vee g$
  and $f \circ g$ are monotone inflating and
  $\Pref_{f \vee g} = \Pref_{f \circ g}$.
  Consequently,   for any monotone function $h : P \rto P$,
  we have
  \begin{align}
    \label{eq:circvee}
    \mu.(\,f \vee g \vee h\,) & :=: \mu.(\,(f \circ g) \vee h\,)\,.
  \end{align}
\end{lemma}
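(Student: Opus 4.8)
The plan is to prove the statement in two stages: first the pointwise claim that $\Pref_{f\vee g}=\Pref_{f\circ g}$ for monotone inflating $f,g$, and then to deduce the fixed-point identity \eqref{eq:circvee} by a routine manipulation using that lemma together with \eqref{eq:diag} and \eqref{eq:circvee}'s left-hand predecessors. For the inflating/monotonicity part: $f\vee g$ is monotone as a pointwise join of monotone maps, and it is inflating since $x\leq f(x)\leq f(x)\vee g(x)$; $f\circ g$ is monotone as a composite, and inflating since $x\leq g(x)\leq f(g(x))$, using that $f$ is inflating applied at the point $g(x)$ together with monotonicity. The heart of the matter is the equality of prefixed-point sets. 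If $p\in\Pref_{f\circ g}$, i.e. $f(g(p))\leq p$, then since $g$ is inflating, $g(p)\leq f(g(p))\leq p$, so $g(p)\leq p$; and then $f(p)\leq f(g(p))$? — no, that is the wrong direction, so instead note $g(p)\leq p$ gives (by monotonicity of $f$) $f(g(p))\leq f(p)$, which is not yet what we want either. The clean argument: from $f(g(p))\leq p$ and $p\leq g(p)$ false in general — rather, from $g$ inflating we get $g(p)\le f(g(p))\le p$, hence $g(p)\leq p$; and from $g(p)\leq p$, monotonicity of $f$ and $f\circ g(p)\le p$ we want $f(p)\le p$. We have $g(p)\le p$, so $f(g(p))\ge$ nothing useful; instead use that $f$ is inflating to see $p\ge f(g(p))\ge g(p)$, and separately, since $g(p) \le p$, we cannot immediately bound $f(p)$. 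The correct route is: $f\vee g\le f\circ g$ pointwise when $g$ is inflating? Check: $(f\vee g)(x)=f(x)\vee g(x)$ and $(f\circ g)(x)=f(g(x))\ge f(x)$ (monotone, $x\le g(x)$) and $\ge g(x)$ (inflating of $f$ at $g(x)$), so indeed $f\vee g\le f\circ g$; hence $\Pref_{f\circ g}\subseteq\Pref_{f\vee g}$. Conversely if $p\in\Pref_{f\vee g}$ then $f(p)\le p$ and $g(p)\le p$, so $f(g(p))\le f(p)\le p$ by monotonicity, giving $p\in\Pref_{f\circ g}$. Thus the two prefixed-point sets coincide.

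Given the equality of prefixed-point sets, the fixed-point identity follows formally: the least prefixed point of a monotone map depends only on its set of prefixed points, so $\mu.(f\circ g)$ exists iff $\mu.(f\vee g)$ does, with the same value; more precisely $\Pref_{(f\circ g)\vee h}=\Pref_{f\circ g}\cap\Pref_h=\Pref_{f\vee g}\cap\Pref_h=\Pref_{(f\vee g)\vee h}$, and the least element of this common set — if it exists — is by definition both $\mu.((f\circ g)\vee h)$ and $\mu.(f\vee g\vee h)$. This justifies the doubled-colon notation ``$:=:$'', which asserts that each side exists if and only if the other does, and that they are then equal. I would also remark in passing that $f\circ g$ here need not be inflating-preserving in any stronger sense, but $f\vee g\le f\circ g\le (\text{something})$ is not needed beyond the two inclusions above.

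The only genuinely delicate point — and the one I would be careful to state correctly — is the direction $\Pref_{f\vee g}\subseteq\Pref_{f\circ g}$, since it is tempting to try to prove the reverse inclusion using $g$ inflating and get confused about which variable the inflating hypothesis is applied at; the argument above resolves this cleanly by first establishing the pointwise inequality $f\vee g\le f\circ g$, which makes one inclusion immediate and reduces the other to a one-line monotonicity computation. Everything else is bookkeeping: monotonicity and inflating-ness of the two constructed functions, and the translation from equality of prefixed-point sets to the fixed-point identity via the definition of $\mu.(-)$ as least prefixed point. I expect no real obstacle beyond getting these directions straight.
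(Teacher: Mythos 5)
Your proof is correct and follows essentially the same route as the paper's: establish that both constructed functions are monotone and inflating, show $\Pref_{f\vee g}=\Pref_{f\circ g}$ (your pointwise inequality $f\vee g\leq f\circ g$ is the same computation the paper performs at a prefixed point, and your converse direction via plain monotonicity is in fact slightly more direct than the paper's, which passes through the observation that prefixed points of inflating maps are fixed points), and then conclude by intersecting with $\Pref_h$. The only criticism is presentational: the several false starts in the middle of the argument should be deleted, since the final clean version is all that is needed.
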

\begin{proof}
It is easy to see that $f \vee g$
  and $f \circ g$ are monotone inflating, so we only verify that  $\Pref_{f \vee g} = \Pref_{f \circ g}$.
  Observe firstly that $\Pref_{f \vee g}= \Pref_{f} \cap
  \Pref_{g}$.
  If $p \in \Pref_{f \circ g}$, then $f(p) \leq f(g(p)) \leq p$ and $g(p) \leq f(g(p)) \leq p$,
  showing that $p \in \Pref_{f \vee g}$.
  Conversely, if $p \in \Pref_{f \vee g}$, then $p$ is a fixed point
  of both $f$ and $g$, since these functions are inflating. It follows
  that $f(g(p)) = f(p) = p$,
  showing $p \in \Pref_{f \circ g}$. 
  
  We have argued that $\Pref_{f \vee g}$ coincides with
  $\Pref_{f \circ g}$; this implies that
  $\Pref_{(f \circ g) \vee h} = \Pref_{f \vee g \vee h}$ and, from
  this equality, equation~\eqref{eq:circvee} immediately follows.
\end{proof}

\begin{proposition}
  \label{prop:mudformula}
  If $\phi$ is a disjunctive \fterm, then
  \begin{align}
      \label{eq:mudformula}
      \mu_{x}.\phi & := \Nec[\bigwedge_{\alpha \in \Head(\phi)}
      \alpha](\bigvee_{\beta \in \Side(\phi)} \beta)\,.
  \end{align}
\end{proposition}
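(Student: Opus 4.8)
The plan is to compute the whole set $\Pref_{\phi}$ of prefixed points of the monotone function $p \mapsto \phi(p)$ (where $\phi(p)$ denotes the value of $\phi$ with $x$ set to $p$, in an arbitrary \Ha $H$) and then read off its least element; since the least prefixed point of a monotone function on a poset, when it exists, is automatically its least fixed point, this yields $\mu_{x}.\phi$. Write $A \eqdef \bigwedge_{\alpha \in \Head(\phi)} \alpha$ and $B \eqdef \bigvee_{\beta \in \Side(\phi)} \beta$, with the conventions $A = \top$, $B = \bot$ when the index set is empty, so that $\Nec[A] B = A \impl B$ is exactly the right-hand side of~\eqref{eq:mudformula}. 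The claim I would prove is that, for every $p \in H$,
\begin{align*}
  \phi(p) \leq p
  \qquad\text{if and only if}\qquad
  A \impl p = p \ \text{ and }\ B \leq p\,.
\end{align*}
(Equivalently, the right-hand condition asks $\alpha \impl p = p$ for each $\alpha \in \Head(\phi)$ and $\beta \leq p$ for each $\beta \in \Side(\phi)$; passing between the two forms uses only that $\impl$ is antitone in its first argument and the currying law $(a \land b) \impl c = a \impl (b \impl c)$.)

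I would prove the claim by structural induction along the grammar~\eqref{grammar:disjunctive}. If $\phi = x$ the function is the identity, $\Head(\phi) = \Side(\phi) = \emptyset$, and both sides hold for every $p$. If $\phi = \beta_{0} \vee \psi$, then $\phi(p) \leq p$ iff $\beta_{0} \leq p$ and $\psi(p) \leq p$, and since $\Head(\phi) = \Head(\psi)$ and $\Side(\phi) = \set{\beta_{0}} \cup \Side(\psi)$, the inductive hypothesis for $\psi$ yields the claim; the case $\phi = \psi_{1} \vee \psi_{2}$ is identical, using $\Pref_{\psi_{1} \vee \psi_{2}} = \Pref_{\psi_{1}} \cap \Pref_{\psi_{2}}$ and that $\Head$, $\Side$ distribute over $\vee$ as unions. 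The one case that is not pure Heyting-algebra bookkeeping is $\phi = \Nec[\alpha_{0}] \psi = \alpha_{0} \impl \psi$, with $\Head(\phi) = \set{\alpha_{0}} \cup \Head(\psi)$ and $\Side(\phi) = \Side(\psi)$. For ``$\Leftarrow$'': if $\alpha_{0} \impl p = p$ and $\psi(p) \leq p$ (the latter holding by the inductive hypothesis), then $\phi(p) = \alpha_{0} \impl \psi(p) \leq \alpha_{0} \impl p = p$. For ``$\Rightarrow$'': suppose $\alpha_{0} \impl \psi(p) \leq p$; here I invoke that $\psi$, being a disjunctive \fterm, is inflating (proved just above), so that $p \leq \psi(p) \leq \alpha_{0} \impl \psi(p) \leq p$, forcing $\psi(p) = p$ and $\alpha_{0} \impl p = p$; the inductive hypothesis for $\psi$ applied to $p$, together with $\alpha_{0} \impl p = p$, then gives the claim for $\phi$.

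Granting the claim, the Proposition is immediate. The element $A \impl B$ is a prefixed point of $\phi$: indeed $A \impl (A \impl B) = A \impl B$ (idempotency of $a \mapsto A \impl a$, e.g.\ the instance $\Pf = \mathrm{id}$ of Lemma~\ref{lemma:idempotency}) and $B \leq A \impl B$, so the right-hand condition holds at $p = A \impl B$. Conversely, any $p \in \Pref_{\phi}$ satisfies $p = (A \impl p) \geq A \impl B$. Hence $A \impl B$ is the least element of $\Pref_{\phi}$, so $\mu_{x}.\phi = \Nec[A] B$ in every \Ha, which is~\eqref{eq:mudformula}; and since the right-hand side exists unconditionally, the ``$:=$'' is here a genuine equality.

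The delicate point, and the reason for describing all of $\Pref_{\phi}$ rather than arguing directly about the least prefixed point, is the case $\phi = \Nec[\alpha_{0}]\psi$: a naive induction establishing only ``$A \impl B \leq p$ for every prefixed point $p$'' fails there, since from the corresponding statement for $\psi$ one extracts at best $\Nec[A] B \leq \alpha_{0} \impl p$, strictly weaker than what is needed. The full characterization of prefixed points supplies the extra leverage, and the key additional ingredient it uses is precisely that disjunctive formulas are inflating. I should add that the two disjunction cases can also be routed through Lemma~\ref{lemma:circvee}---for inflating functions a join of functions has the same prefixed points as their composite, and composing disjunctive formulas amounts to substitution, which merges $\Head$ and $\Side$ in the same way---but the direct computation above is shorter and avoids substitution.
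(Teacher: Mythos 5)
Your proof is correct, and it takes a genuinely different route from the paper's. The paper proves \eqref{eq:mudformula} by rewriting: it repeatedly applies Lemma~\ref{lemma:circvee} (for inflating maps, $\Pref_{f \vee g} = \Pref_{f \circ g}$) to replace $\phi$ by a join of \emph{reduced} formulas $\Nec[\alpha]x$, $\beta \vee x$, $x$ with the same \lfp, sandwiches $\mu_{x}.\phi$ against that join, then composes the guards into a single $\Nec[\bigwedge\alpha]$ and extracts it with Proposition~\ref{prop:fixrimpl}. You instead compute the entire set $\Pref_{\phi}$ by structural induction on the grammar~\eqref{grammar:disjunctive}, showing it equals $\set{p \mid A \impl p = p \text{ and } B \leq p}$ with $A = \bigwedge\Head(\phi)$, $B = \bigvee\Side(\phi)$, and then exhibit $A \impl B$ as its minimum. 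All the steps check out: the only non-bookkeeping case is $\Nec[\alpha_{0}]\psi$, where your use of the inflating property of $\psi$ to force $p \leq \psi(p) \leq \alpha_{0}\impl\psi(p) \leq p$ is exactly the leverage needed, and your closing remark correctly identifies why a naive induction on the least prefixed point alone would not go through. What each approach buys: the paper's argument is modular (it reuses \eqref{eq:circvee} and \eqref{eq:fixrimpl-conj}, which are needed elsewhere anyway) and stays at the level of general inflating monotone maps; yours is more self-contained and elementary, avoids Proposition~\ref{prop:fixrimpl} entirely, and proves something strictly stronger --- a complete description of $\Pref_{\phi}$, not just its least element --- which in particular makes the existence assertion behind the ``$:=$'' transparent. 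Both proofs rest on the same key fact, namely that disjunctive \fterms are inflating.
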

\begin{proof}
  For $\psi,\chi$ \fterms, let us write $\psi \sim \chi$ when $\mu_{x}.\psi =
  \mu_{x}.\chi$.
  We say that a disjunctive formula $\psi$ is reduced (w.r.t. $\phi$)
  if either it is $x$, or it is of the form $\beta \vee x$ (or $x \vee
  \beta$) for some $\beta \in \Side(\phi)$, or of the form
  $\Nec[\alpha]x$ for some $\alpha \in \Head(\phi)$.
  A set $\Phi$ of disjunctive formulas  is \emph{reduced} if every formula in
  $\Phi$ is reduced.
  
  We shall compute a reduced set of disjunctive formulas $\Phi_{k}$
  such that $\phi \sim \bigvee \Phi_{k}$.
  Thus let $\Phi_{0} = \set{\phi}$.  If $\Phi_{i}$ is not reduced,
  then there is $\phi_{0} \in \Phi_{i}$ which is not reduced, thus of
  the form (a) $\beta \vee \psi$ (or $\psi \vee \beta$) with $\psi
  \neq x$, or (b) $\Nec[\alpha]\psi$ with $\psi \neq x$, or (c)
  $\psi_{1} \vee \psi_{2}$.  According to the case ($\ell$), with $\ell \in
  \set{a,b,c}$, we let $\Phi_{i + 1}$ be $(\Phi_{i} \setminus
  \set{\phi_{0}}) \cup \Psi_{\ell}$ where $\Psi_{\ell}$ is as follows:
  \begin{align*}
    \Psi_{a} &= \set{\beta \vee x, \psi}, 
    & \Psi_{b} & = \set{\Nec[\alpha]x, \psi}, 
    & \Psi_{c} & = \set{\psi_{1},\psi_{2}}\,.
  \end{align*}
  By Lemma~\ref{lemma:circvee}, we have $\bigvee \Phi_{i} \sim \bigvee
  \Phi_{i+1}$. Morever, for some $k \geq 0$, $\Phi_{k}$ is reduced and
  $\Phi_{k} \subseteq \set{\Nec[\alpha]x \mid \alpha \in \Head(\phi)}
  \cup \set{\beta \vee x \mid \beta \in \Side(\phi)} \cup
  \set{x}$. Consequently
  \begin{align}
    \label{eq:allsideQM}
    \mu_{x}.\phi(x) & = \mu_{x}.\bigvee \Phi_{k} \leq \mu_{x}.(x \vee
    \bigvee_{\alpha \in \Head(\phi)} \Nec[\alpha]x \vee \bigvee_{\beta
      \in \Side(\phi)} \beta \vee x )\,.
  \end{align}
  On the other hand, if $\alpha \in \Head(\phi)$, then $\phi(x) =
  \psi_{1}(x,\Nec[\alpha]\psi_{2}(x))$ for some disjunctive formulas
  $\psi_{1}$ and $\psi_{2}$, so
  \begin{align*}
    \Nec[\alpha]x & \leq \Nec[\alpha]\psi_{2}(x) \leq
    \psi_{1}(x,\Nec[\alpha]\psi_{2}(x)) = \phi(x)
  \end{align*}
  and, similarly, $\beta \vee x  \leq \phi(x)$, whenever $\beta \in \Side(\phi)$.
  It follows that
  \begin{align*}
    x \vee \bigvee_{\alpha \in \Head(\phi)} \Nec[\alpha]x \vee
    \bigvee_{\beta \in \Side(\phi)} \beta \vee x & \leq \phi(x)\,,
  \end{align*}
  whence, by taking the \lfp in both sides of the above inequality, we
  derive equality in \eqref{eq:allsideQM}.
  Finally, in order to obtain \eqref{eq:mudformula}, we compute as follows:
  \begin{align*}
    \smalllhs[7mm]{\mu_{x}.(x \vee \bigvee_{\alpha \in \Head(\phi)}
      \Nec[\alpha]x \vee
      \bigvee_{\beta \in \Side(\phi)} \beta \vee x)} \\
    & = 
    \mu_{x}.(\Nec[\alpha_{1}] \ldots \Nec[\alpha_{n}]x \vee
    (x \vee \bigvee_{\beta \in \Side(\phi)} \beta \vee x)) 
    \tag*{
by Lemma~\ref{lemma:circvee}, with  $\Head(\phi) = \set{\alpha_{1},\ldots ,\alpha_{n}}$,}
    \\
    & = \mu_{x}.(\Nec[\bigwedge_{\alpha \in \Head(\phi)} \alpha]x \vee
    (x \vee \bigvee_{\beta \in \Side(\phi)} \beta \vee x)), 
    \tag*{
      since $\Nec[\alpha_{1}] \ldots \Nec[\alpha_{n}]x =
        \Nec[\bigwedge_{i = 1,\ldots ,n}\alpha_{i}]x$,}
    \\
    & = \mu_{x}.(\Nec[\bigwedge_{\alpha \in \Head(\phi)} \alpha]( x
    \vee \bigvee_{\beta \in \Side(\phi)} \beta \vee x)) , \tag*{by
      Lemma~\ref{lemma:circvee},}
    \\
    & = \Nec[\bigwedge_{\alpha \in \Head(\phi)} \alpha]\mu_{x}.( x
    \vee \bigvee_{\beta \in \Side(\phi)} \beta \vee x) , \tag*{by
      Proposition~\ref{prop:fixrimpl},}
    \\
    & = \Nec[\bigwedge_{\alpha \in \Head(\phi)} \alpha](
    \bigvee_{\beta \in \Side(\phi)} \beta)\,.  \tag*{\qedhere}
  \end{align*}
\end{proof}

\begin{example}
  Formula~\eqref{eq:mudformula} yields
  \begin{align*}
    \mu_{x}.(\,\Nec[\alpha_{1}](\beta_{1} \vee x) \vee
    \Nec[\alpha_{2}](\beta_{2} \vee x)\,)
    & = 
    \Nec[\alpha_{1}\land \alpha_{2}](\beta_{1} \vee \beta_{2})\,.
  \end{align*}  
\end{example}

\begin{remark}
  Let $\phi$ be a disjunctive formula and consider an occurrence in
  $\phi$ of a variable $y$ distinct from $x$. Necessarily, such an
  occurrence is located in some head subformula or in some side
  subformula of $\phi$. Therefore we can map such an occurrence to an
  occurrence of the same variable within the formula on the right of
  the equality~\eqref{eq:mudformula}; notice that a \weaklynegative
  occurrence is mapped to a \weaklynegative occurrence. Since every
  occurrence of a variable $y$ in the formula on the right of
  \eqref{eq:mudformula} has a preimage through the mapping, we
  conclude the following observation, which is necessary for the
  global elimination procedure to work: \emph{if a variable $y$ is
    \weaklynegative in the disjunctive formula $\phi$, then it is
    still \weaklynegative in the formula $\mu_{x}.\phi$ as defined by
    equation~\eqref{eq:mudformula}. } Similarly, 
  if $\phi$ is \stronglypositive in $x$ and \weaklynegative in $y$,
  then $y$ is \weaklynegative in each conjunct appearing on the right
  of equation~\eqref{eq:CNF}.
\end{remark}

\subsection{\Wnegative elimination}
\label{sec:weaklynegative}

Recall that we are considering formulas $\phi$ in which every
occurrence of the variable
$x$ is  positive.
Therefore, if $\phi$ is \wnegative in
$x$, 
then we can write
\begin{align}
  \label{eq:wndecomposition}
  \phi(x) & = \psi_{0}(\psi_{1}(x), \ldots ,\psi_{n}(x))\,,
\end{align}
for \fterms $\psi_{0}(y_{1},\ldots ,y_{n})$ and $\psi_{i}(x)$, $i =
1,\ldots ,n$, such that: (a) all the variables $y_{i}$ are negative in $\psi_{0}$; (b) for $i = 1,\ldots ,n$,
$x$ is negative  $\psi_{i}$.

\begin{proposition}
  \label{prop:weaklynegative}
  Let $\phi$ be a formula which is \weaklynegative in $x$.
  Let $\langle \nu_{1},\ldots ,\nu_{n}\rangle$ be a collection of
  \fterms denoting the greatest solution of the system of equations
  $\set{ y_{i} = \psi_{i}(\psi_{0}(y_{1},\ldots ,y_{n})) \mid i =
    1,\ldots ,n }$. Then $\psi_{0}(\nu_{1},\ldots ,\nu_{n})$ is a
  formula equivalent to $\mu_{x}.\phi(x)$.
\end{proposition}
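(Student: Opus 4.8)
The plan is to strip the statement down to a single order-theoretic fact---a ``rolling'' identity dual to Lemma~\ref{lemma:rolling}, but for \emph{antitone} maps---and then obtain the proposition by reading it off semantically.

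First I would fix a \Ha $H$ together with a valuation of all variables other than $x$, and introduce $g : H^{n} \rto H$ for the interpretation of $\psi_{0}(y_{1},\ldots ,y_{n})$ and $f : H \rto H^{n}$ for the interpretation of $\langle \psi_{1}(x),\ldots ,\psi_{n}(x)\rangle$. By clause (a) of~\eqref{eq:wndecomposition} each $y_{i}$ is negative in $\psi_{0}$, so $g$ is antitone in each coordinate; by clause (b) $x$ is negative in each $\psi_{i}$, so $f$ is antitone. Hence both composites $g \circ f : H \rto H$ and $f \circ g : H^{n} \rto H^{n}$ are monotone, and by~\eqref{eq:wndecomposition} the interpretation of $\phi$ as a function of $x$ is exactly $g \circ f$. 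The operator associated with the system $\set{\, y_{i} = \psi_{i}(\psi_{0}(y_{1},\ldots ,y_{n})) \mid i = 1,\ldots ,n\,}$ is precisely $f \circ g$; so $\langle \nu_{1},\ldots ,\nu_{n}\rangle$ denotes $\nu.(f \circ g)$---this greatest solution exists and is given by \fterms, by a decomposition dual to Lemma~\ref{lemma:bekic} combined with Corollary~\ref{cor:phitop}---and $\psi_{0}(\nu_{1},\ldots ,\nu_{n})$ denotes $g\bigl(\nu.(f\circ g)\bigr)$.

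It then suffices to prove: \emph{if $f : P \rto Q$ and $g : Q \rto P$ are antitone and $\nu.(f \circ g)$ exists, then $\mu.(g \circ f)$ exists and equals $g\bigl(\nu.(f\circ g)\bigr)$.} For this I would set $b = \nu.(f\circ g)$ and $a = g(b)$; from $f(g(b)) = b$ one gets $g(f(a)) = g(f(g(b))) = g(b) = a$, so $a$ is a fixed point of $g\circ f$. For minimality, let $p$ be any prefixed point of $g\circ f$, so $g(f(p)) \leq p$; applying the antitone map $f$ reverses this to $f(p) \leq f(g(f(p))) = (f\circ g)(f(p))$, so $f(p)$ is a postfixed point of $f\circ g$ and hence $f(p) \leq \nu.(f\circ g) = b$; applying the antitone map $g$ then gives $a = g(b) \leq g(f(p)) \leq p$. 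Thus $a$ is the least prefixed point of $g \circ f$, hence the \lfp; note that no completeness of $H$ is used, since $a$ is exhibited explicitly. Instantiating $f,g$ by the semantic maps above yields $\mu_{x}.\phi(x) = \psi_{0}(\nu_{1},\ldots ,\nu_{n})$ in every \Ha, as required.

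I expect the genuine work to be bookkeeping rather than the computation above: namely, checking that the decomposition~\eqref{eq:wndecomposition} can in fact be chosen so that clauses (a) and (b) hold (each $\psi_{i}(x)$ arises by removing one implication-antecedent enclosing a \weaklynegative occurrence of $x$, which makes $x$ negative in $\psi_{i}$ while keeping it positive in $\phi$), and that the $\nu_{i}$ are legitimate \fterms. The order-theoretic core---one application of ``antitone rolling''---is otherwise short and routine.
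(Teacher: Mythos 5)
Your proof is correct and follows essentially the same route as the paper: decompose $\phi$ as $\psi_{0}\circ\langle\psi_{1},\ldots,\psi_{n}\rangle$, identify the system's greatest solution with $\nu.(f\circ g)$, and conclude by a rolling argument combined with the duality between least and greatest fixed points. The only cosmetic difference is that you verify the ``antitone rolling'' identity directly from the definitions, whereas the paper obtains it by applying Lemma~\ref{lemma:rolling} to the monotone maps induced on the opposite posets $[H^{op}]^{n}$.
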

\begin{proof}
  Let $v : \Vars \setminus \set{x,y_{1},\ldots ,y_{n}} \rto H$ be a
  partial valuation into a \Ha $H$, put $\Pf_{0} = \eval{\psi_{0}}$
  and, for $i = 1,\ldots ,n$, $\Pf_{i} = \eval{\psi_{i}}$. Then
  $\Pf_{0}$ is a monotone function from $ [ H^{op}]^{n}$ to $H$. Here
  $H^{op}$ is the poset with the same elements as $H$ but with the
  opposite ordering relation. Similarly, for $1 \leq i \leq n$,
  $\Pf_{i} : H \rto H^{op}$.  If we let $\bar{\Pf} = \langle \Pf_{i} \mid
  i = 1,\ldots ,n \rangle \circ \Pf_{0}$, then $\bar{\Pf}
  :{[H^{op}]}^{n} \rto {[H^{op}]}^{n}$.
  We exploit next the fact that $(\cdot)^{op}$ is a functor, so that
  $f^{op} : P^{op} \rto Q^{op}$ is the same monotone function as $f$,
  but considered as having distinct domain and codomain.  Then, using
  \roll, we can write
  \ppsmall
  \begin{align}
    \label{eq:munu}
    \mu. (\,\Pf_{0} \circ \langle \Pf_{i} \mid i = 1,\ldots ,n \rangle
    \,) & = \Pf_{0}(\,\mu. (\,\langle \Pf_{i} \mid i = 1,\ldots ,n
    \rangle 
    \circ \Pf_{0}\,)\,)
    = \Pf_{0}(\,\mu.\bar{\Pf}\,) = \Pf_{0}(\,\nu.\bar{f}^{op}\,)\,,
  \end{align}
  \ppnormal
  since the \lfp of $f$ in $P^{op}$ is the \gfp of $f^{op}$ in
  $P$. That is, if we consider the function $\langle \Pf_{i} \mid i =
  1,\ldots ,n \rangle \circ \Pf_{0}$ as sending a tuple of elements of
  $H$ (as opposite to $H^{op}$) to another such a tuple, then
  equation~\eqref{eq:munu} proves that a formula denoting the \lfp of
  $\phi$ is constructible out of formulas for the greatest solution of
  the system mentioned in the statement of the proposition.  
\end{proof}

As far as computing the greatest solution of the system mentioned in
the proposition, this can be achieved by using the \Bekic elimination
principle 
(see Lemma~\ref{lemma:bekic}).  This principle implies that
solutions of systems can be constructed from solutions of linear
systems, i.e. from usual parametrized \fp{s}. In our case, as
witnessed by equation~\eqref{eq:nuexists}, these parametrized \gfp{s}
are computed by substituting $\top$ for the \fp variable.

\begin{example}
  \label{ex:binaryjoin}
  Consider  the \wnegative $\phi$ defined by
  \begin{align}
    \label{def:jointwo}
    \phi(x) & \eqdef ((x \impl c) \impl a)\, \vee \,((x \impl d) \impl b) \,.
  \end{align}
  We can take then
  \begin{align*}
    \psi_{0}(y_{1},y_{2}) & \eqdef y_{1}\impl a \vee y_{2} \impl b\,,
    \;\;\psi_{1}(x) \eqdef x \impl c\,, \;\;\psi_{2}(x) = x \impl d
    \,.
  \end{align*}
  The system of equations whose greatest solution we need to compute
  is
  \ppfootnotesize
  $$
  \begin{array}{cc}
    \begin{array}{l@{\,}l}
      y_{1} &
      = \psi_{1}(\psi_{0}(y_{1},y_{2})) 
      =(y_{1}\impl a \vee
      y_{2} \impl
      b) \impl c\,, 
    \end{array}
    \begin{array}{l@{\,}l}
      y_{2} & = \psi_{2}(\psi_{0}(y_{1},y_{2})) 
      = (y_{1}\impl a \vee y_{2} \impl b) \impl d \,.
    \end{array}
  \end{array}
  $$
  \ppnormal
  The \Bekic elimination principle is used to find this solution:
  \pptiny
  \begin{align*}
    \nu_{y_{2}}.\psi_{2}(\psi_{0}(y_{1},y_{2})) & =
    \nu_{y_{2}}.
    ((y_{1}\impl a) \vee
    (y_{2} \impl b)) \impl d 
    = ((y_{1}\impl a) \vee
    (\top \impl b)) \impl d 
    = ((y_{1}\impl a) \vee
    b) \impl d \,, \\
    \nu_{1} & =
    \nu_{y_{1}}.\psi_{1}(\psi_{0}(y_{1},\nu_{y_{2}}.\psi_{2}(\psi_{0}(y_{1},y_{2}))))
    =
    \psi_{1}(\psi_{0}(\top,\nu_{y_{2}}.\psi_{2}(\psi_{0}(\top ,y_{2})))) \\
    & = (\top \impl a \vee (((\top \impl a) \vee b)
    \impl d) \impl
      b) \impl c 
      = (a \vee (((a \vee b) \impl d) \impl
      b)) \impl c\,, \\
      \nu_{2} & = ((\nu_{1}\impl a) \vee b) \impl d\,.
    \end{align*}
    \ppnormal
  Then, by \eqref{eq:rolling}, we have
  $\mu_{x}.\phi(x) = \nu_{1}\impl a \vee \nu_{2} \impl b$.
\end{example}
In the next Section, Proposition~\ref{prop:convergesforwefs} shall
provide an alternative of the \lfp of a \weaklynegative formula $\phi$
by means of approximants.

\section{Upper bounds for closure ordinals}
\label{sec:generalUpperBounds}

The closure ordinal of $\phi(x) \in \FIpc$ is the least integer $n$
for which we can write $\mu_{x}.\phi(x) = \phi^{n}(\bot)$. In view of
the proof of Proposition~\ref{prop:total}, the closure ordinal always
exists, for each intuitionistic formula $\phi(x)$ positive on
$x$. Closure ordinals yield a representation of \lfp{s} of formulas
alternative to the one presented in the previous Section.  
Such representation can be exploited notationally, as in $\mu$-calculi
with explicit approximations \cite{damGurov}.
Also it can be exploited computationally because of the reduced space
requirements, at least when variable sharing is used. Finally, it can be
exploited to provide axiomatizations.
In this Section we begin the study of (finite)
closure ordinals.

\subsection{General results}
In this Section all the posets we consider shall have a least element,
denoted by $\bot$ as usual.
We say that a monotone function $f: P \rto P$ \emph{converges in $n$
  steps to its \lfp} if $f^{n+1}(\bot) = f^{n}(\bot)$ or,
equivalently, if $\Mu{f} = f^{n}(\bot)$; 
in such a case the least of those integers
$n$ is called the \emph{closure ordinal of
$f$} and it is denoted by $\cl(f)$.  
We informally call the $f^{n}(\bot)$, $n \geq
0$, the \emph{approximants} (or \emph{approximations}) of (the \lfp
of) $f$.
If
$f : Q \times P^{n} \rto P^{k}$ is a monotone funtion
and $\set{i_{1}< i_{2} < \ldots < i_{k}} \subseteq \set{1,\ldots ,n}$,
then we write $\cl[(x_{i_{1}},\ldots ,x_{i_{k}})](f) \leq n$ if, for
each $q \in Q$ and $\vec{p} \in P^{n-k}$,
$\cl(f_{(q,\vec{p})}) \leq n$, where
$f_{(q,\vec{p})} : P^{k} \rto P^{k}$ is the monotone function obtained
from $f$ by fixing $q \in Q$ and evaluating all the variables $x_{j}$
with $j \not\in \set{i_{1},\ldots ,i_{k}}$ by means of the vector
$\vec{p}$.

\bigskip

The next propositions suggest how to compute convergence of monotone
functions based on the properties of \lfp{s} that we have introduced
in Section~\ref{sec:notation}.

\begin{proposition}[Convergence for \eqref{eq:rolling}]
  \label{prop:convrolling}
  Let $f : P \rto Q$ and $g: Q \rto P$ be monotone functions. If
  $\Mu{(f \circ g)} = (f \circ g)^{n}(\bot)$, then
  $\Mu{(g \circ f)} = (g \circ f)^{n+1}(\bot)$.  Therefore
  $\cl(f \circ g) \leq 1 + \cl(g \circ f)$.
\end{proposition}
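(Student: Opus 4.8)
The plan is to compare the iterates of $g\circ f$ with those of $f\circ g$ directly. By \eqref{eq:rolling} (Lemma~\ref{lemma:rolling}), once we know that $\Mu{(f\circ g)}$ exists and equals $(f\circ g)^{n}(\bot)$, we get that $\Mu{(g\circ f)}$ exists and equals $g(\Mu{(f\circ g)}) = g((f\circ g)^{n}(\bot))$. So the whole statement reduces to the identity $g((f\circ g)^{n}(\bot)) = (g\circ f)^{n+1}(\bot)$, plus checking that this common value is genuinely the \lfp (which \eqref{eq:rolling} already hands us).

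First I would establish, by induction on $k\geq 0$, the intertwining identity
\begin{align*}
  (g\circ f)^{k+1}(\bot) &= g\bigl((f\circ g)^{k}(f(\bot))\bigr)\,.
\end{align*}
For $k=0$ both sides are $g(f(\bot))$. For the inductive step, apply $g\circ f$ to the case $k$: $(g\circ f)^{k+2}(\bot) = g(f((g\circ f)^{k+1}(\bot))) = g(f(g((f\circ g)^{k}(f(\bot))))) = g((f\circ g)^{k+1}(f(\bot)))$, using associativity of composition. Separately, by monotonicity and $\bot \leq g(f(\bot))$ one gets $(f\circ g)^{k}(f(\bot)) \geq (f\circ g)^{k}(\bot)$ for all $k$; in fact since $f(\bot) \leq f(g(f(\bot))) = (f\circ g)(f(\bot))$ is not automatic, I would instead argue more carefully: from the hypothesis $(f\circ g)^{n+1}(\bot) = (f\circ g)^{n}(\bot)$ we know $(f\circ g)^{n}(\bot)$ is a fixed point of $f\circ g$, hence $f\circ g$ stabilizes from step $n$ on, and since $(f\circ g)^{0}(\bot)=\bot \leq f(\bot) \leq (f\circ g)^{n}(\bot) = \Mu{(f\circ g)}$, sandwiching gives $(f\circ g)^{n}(f(\bot)) = \Mu{(f\circ g)}$ as well (apply the monotone map $(f\circ g)^{n}$ to $\bot \leq f(\bot) \leq \Mu{(f\circ g)}$ and use that it sends both ends to the fixed point). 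Combining with the intertwining identity at $k=n$ yields $(g\circ f)^{n+1}(\bot) = g((f\circ g)^{n}(f(\bot))) = g(\Mu{(f\circ g)}) = \Mu{(g\circ f)}$, the last equality by \eqref{eq:rolling}. The statement $\cl(f\circ g) \leq 1 + \cl(g\circ f)$ is then the contrapositive-flavoured reformulation: taking $n = \cl(f\circ g)$ gives $\Mu{(g\circ f)} = (g\circ f)^{n+1}(\bot)$, so $g\circ f$ converges in at most $n+1$ steps.

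The main obstacle I anticipate is the bookkeeping around the offset between $f(\bot)$ and $\bot$: the clean iterate-intertwining identity naturally produces $(f\circ g)^{n}(f(\bot))$ rather than $(f\circ g)^{n}(\bot)$, and one must use the stabilization hypothesis (not just monotonicity) to identify these. Everything else is routine induction and an appeal to Lemma~\ref{lemma:rolling}; in particular existence of the relevant \lfp{s} is not a side condition to worry about, since it is delivered by \eqref{eq:rolling} from the existence of $\Mu{(f\circ g)}$.
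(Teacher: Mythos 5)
Your proof is correct and takes essentially the same route as the paper's: both rest on Lemma~\ref{lemma:rolling} together with the intertwining identity $(g\circ f)^{n+1}(\bot)=g\bigl((f\circ g)^{n}(f(\bot))\bigr)$ and a comparison of $(f\circ g)^{n}(f(\bot))$ with $(f\circ g)^{n}(\bot)$. The only organizational difference is that the paper derives the one inequality $\mu.(g\circ f)\leq (g\circ f)^{n+1}(\bot)$ and invokes the standard converse inclusion, whereas you sandwich $(f\circ g)^{n}(f(\bot))$ to identify it exactly with $\mu.(f\circ g)$; both are fine.
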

\begin{proof}
  We observe that
  \begin{align*}
    \Mu{(g \circ f)}
    & = g (\Mu{(f \circ g)}) = g \circ (f \circ g)^{n}(\bot) 
    \leq g \circ (f \circ g)^{n}(f(\bot)) = (g \circ f)^{n+1}(\bot) \,.
  \end{align*}
  Since the converse inclusion always holds, we have proved the
  proposition.
\end{proof}

\begin{example}
  \label{ex:convrollingtight}
  Consider $\phi(x) \eqdef (x \impl b) \impl a$. By using
  Proposition~\ref{prop:weaklynegative}
  (with $\psi_{0}(y_{1}) \eqdef (y_{1}\impl a)$ and
  $\psi_{1}(x) \eqdef (x\impl b)$)
  we know that
  \begin{align*}
    \mu_{x}.\phi(x) & = (\nu_{x}.(x\impl a) \impl b) \impl  a = ((\top
    \impl a) \impl b) \impl  a = (a \impl b) \impl a\,.
  \end{align*}
  Otherwise, we can combine Propositions~\ref{prop:phitop}
  and~\ref{prop:convrolling} to deduce
  $\mu_{x}.\phi(x) = \phi^{2}(\bot)$. Indeed, a direct computation of
  the approximants 
  yields
  \begin{align*}
    \phi(\bot) & = a\,, \;\phi^{2}(\bot) = (a \impl b) \impl a\,.    
  \end{align*}
  This example shows that the bound on the convergence given in
  Proposition~\ref{prop:convrolling} is tight, since 
  the equality $\phi^{2}(\bot) = \phi(\bot)$ only holds for
  arbitrary $a$ and $b$ whenever
  $H$ is a Boolean algebra. As a matter of fact, note that this
  equality is Peirce's law
  \begin{align*}
    (a \impl b) \impl a
    & = a\,,
  \end{align*}
  which forces a \Ha to be Boolean.
\end{example}

\begin{proposition}[Convergence for \eqref{eq:bekic}]
  \label{prop:bekicconvergence}
  Let $\langle f,g\rangle : P\times Q \rto P\times Q$ be a monotone
  mapping.  Put $h(x) \eqdef f(x,\mu_{y}.g(x,y))$. Let $m,n \geq 0$ be
  such that $\mu_{y}.g(x,y) = g_{x}^{m}(\bot)$ for each $x \in P$ and
  $\mu_{x}.h(x) = h^{n}(\bot)$.  Then
  \begin{align}
    \label{eq:bekicconvergence}
    \Mu{\langle f,g\rangle} & = \langle f,g\rangle^{(n+1)(m+1)
      -1}(\bot,\bot)\,.\
  \end{align}
  That is,
  $\cl(\langle f,g\rangle) \leq (\cl[y](g) + 1)(\cl(h) +1) -1$.
\end{proposition}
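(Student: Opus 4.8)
The plan is to unfold the iteration of $\langle f,g\rangle$ from the bottom and track both coordinates simultaneously. Write $(a_k,b_k) \eqdef \langle f,g\rangle^k(\bot,\bot)$, so that $a_0=b_0=\bot$, $a_{k+1}=f(a_k,b_k)$ and $b_{k+1}=g(a_k,b_k)$. By Lemma~\ref{lemma:bekic} we have $\mu.\langle f,g\rangle = \langle\mu_1,\mu_2\rangle$ with $\mu_1=\mu_x.h(x)=h^{n}(\bot)$ and $\mu_2=\mu_y.g(\mu_1,y)=g_{\mu_1}^{m}(\bot)$ (the last equalities using the hypotheses, applied at $x=\mu_1$). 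Two standing observations reduce the work: since $\langle f,g\rangle$ is monotone and $(\bot,\bot)\leq\langle f,g\rangle(\bot,\bot)$, the sequence $(a_k,b_k)$ is increasing; and since $\langle\mu_1,\mu_2\rangle$ is a fixed (hence prefixed) point of $\langle f,g\rangle$, we have $a_k\leq\mu_1$ and $b_k\leq\mu_2$ for every $k$. Setting $N\eqdef(n+1)(m+1)-1 = n(m+1)+m$, it therefore suffices to prove the \emph{lower} bounds $a_N\geq\mu_1$ and $b_N\geq\mu_2$: equality then follows, $\langle a_N,b_N\rangle$ is a fixed point, the iteration has stabilized, and this gives both the displayed equation and $\cl(\langle f,g\rangle)\leq N$ at once.

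\textbf{Key claim.} The heart of the argument is: $a_{i(m+1)}\geq h^{i}(\bot)$ for all $i\geq 0$, proved by induction on $i$. The base case is trivial. For the step, put $\alpha\eqdef h^{i}(\bot)$ and $K\eqdef i(m+1)$; the induction hypothesis gives $a_K\geq\alpha$, hence $a_{K+j}\geq\alpha$ for all $j$ by monotonicity of the iteration. A sub-induction on $j$ then shows $b_{K+j}\geq g_{\alpha}^{j}(\bot)$ for $0\leq j\leq m$: indeed $b_{K+0}=b_K\geq\bot$, and $b_{K+j+1}=g(a_{K+j},b_{K+j})\geq g(\alpha,g_{\alpha}^{j}(\bot))=g_{\alpha}^{j+1}(\bot)$. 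Taking $j=m$ and using the hypothesis $g_{\alpha}^{m}(\bot)=\mu_y.g(\alpha,y)$ yields $b_{K+m}\geq\mu_y.g(\alpha,y)$, whence
\[
a_{(i+1)(m+1)}=a_{K+m+1}=f(a_{K+m},b_{K+m})\geq f(\alpha,\mu_y.g(\alpha,y))=h(\alpha)=h^{i+1}(\bot),
\]
completing the induction.

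\textbf{Finishing.} Applying the claim with $i=n$ gives $a_{n(m+1)}\geq h^{n}(\bot)=\mu_1$, so $a_{n(m+1)}=\mu_1$; moreover $a_k=\mu_1$ for all $k\geq n(m+1)$, since $a_{k+1}=f(\mu_1,b_k)\leq f(\mu_1,\mu_2)=\mu_1$ while $a_{k+1}\geq a_k=\mu_1$. Thus for $k\geq n(m+1)$ the first argument of $g$ is frozen at $\mu_1$, so $b_{n(m+1)+j}=g_{\mu_1}^{j}(b_{n(m+1)})\geq g_{\mu_1}^{j}(\bot)$; with $j=m$ this gives $b_N=b_{n(m+1)+m}\geq g_{\mu_1}^{m}(\bot)=\mu_y.g(\mu_1,y)=\mu_2$, using the hypothesis on $g$ at $x=\mu_1$ together with the Beki\v{c} formula for $\mu_2$. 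Since $N\geq n(m+1)$ we also have $a_N=\mu_1$, so $(a_N,b_N)=(\mu_1,\mu_2)=\mu.\langle f,g\rangle$, as required; the reformulation in terms of $\cl[y](g)$ and $\cl(h)$ follows by taking $m$ and $n$ to be those closure ordinals.

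\textbf{Main obstacle.} There is no deep difficulty here, only bookkeeping that must be set up correctly: the right invariant is $a_{i(m+1)}\geq h^{i}(\bot)$, and the point is to recognize that a window of $m+1$ iterations of $\langle f,g\rangle$ is exactly what lets the second coordinate saturate at $\mu_y.g(a,y)$ for the current value $a$ of the first coordinate, and that once the first coordinate reaches $\mu_1$ it never moves, so the last $m$ steps reduce to plain $g_{\mu_1}$-iteration. Getting these index counts to line up with $(n+1)(m+1)-1$ is the only thing requiring care.
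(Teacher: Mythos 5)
Your proof is correct and follows essentially the same route as the paper's: both establish, by the same double induction with the invariant $h^{i}(\bot)\leq\pi_{1}(\langle f,g\rangle^{i(m+1)}(\bot,\bot))$ and an inner $m$-step saturation of the second coordinate, that the iterate at step $(n+1)(m+1)-1$ dominates the Beki\v{c} solution $\langle\mu_1,\mu_2\rangle$, the reverse inequality being automatic. The only difference is presentational (you track the iterates directly, the paper introduces auxiliary sequences $\k_i,\h_i$ and a parameter function $\psi$).
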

\begin{proof}
  Let us define by induction the following sequences:
  \begin{align*}
    \f_{0} & = \g_{0} = \bot, &
    \f_{i+1} & = f(\f_{i},\g_{i})\,, & \g_{i+1} & = g(\f_{i},\g_{i})\,,\\
    \k_{0} & = \h_{0} = \bot\,, &
    \k_{i+1} &= (g_{\h_{i}})^{m}(\bot)\,, & \h_{i + 1} & = f(\h_{i},\k_{i + 1})\,.
  \end{align*}
  Notice first that, for each $i \geq 0$,
  $\langle f,g\rangle^{i}(\bot,\bot) = \langle \f_{i},\g_{i}\rangle$, On
  the other hand, we have
  \begin{align*}
    \h_{i + 1} & = f(\h_{i},\k_{i+1}) = f(\h_{i},(g_{\h_{i}})^{m}(\bot)) =
    f(\h_{i},\mu_{y}.g(\h_{i},y)) = h(\h_{i})\,,
  \end{align*}
  so, by a straightforward induction, we obtain that $\h_{i} =
  h^{i}(\bot)$. Then, by the \Bekic property, 
  \begin{align*}
    \Mu{\langle f,g\rangle} & = \langle
    h^{n}(\bot),(g_{h^{n}(\bot)})^{m}(\bot)\rangle = \langle
    \h_{n},\k_{n+1} \rangle.
  \end{align*}
  \begin{claim*}
    Let $\psi : \N \rto \N$ be any function.  For each $i \geq 0$,
    \begin{enumerate}
    \item $\h_{i} \leq \f_{\psi(i)}$ implies 
      $\k_{i+1} \leq \g_{\psi(i) + m}$;
    \item $\h_{i} \leq \f_{\psi(i)}$ implies
      $\h_{i} \leq \f_{\psi(i) + m +1}$.
    \end{enumerate}
  \end{claim*}
  \begin{proofofclaim}
    (1) Let us suppose that $\h_{i} \leq \f_{\psi(i)}$ and prove that
    $(g_{\h_{i}})^{\ell}(\bot) \leq \g_{\psi(i) + \ell}$ for
    $\ell = 0,\ldots ,m$.
    This relation trivially holds for $\ell = 0$ and, 
    supposing it holds for $\ell$,
    \begin{align*}
      (g_{\h_{i}})^{\ell + 1}(\bot) & =
      g_{\h_{i}}(g_{\h_{i}}^{\ell}(\bot))
      \leq g_{\h_{i}}(\g_{\psi(i) + \ell}) = g(\h_{i}, \g_{\psi(i) + \ell}) \\
      & \leq g(\f_{\psi(i)}, \g_{\psi(i) + \ell}) \leq g(\f_{\psi(i) +
        \ell}, \g_{\psi(i) + \ell}) = \g_{\psi(i) + \ell +1}\,.
    \end{align*}
    Thus, for $\ell = m$, we have $\k_{i+1} = (g_{\h_{i}})^{m}(\bot) \leq
    \g_{\psi(i) + m}$.

    (2) If we suppose $\h_{i} \leq \f_{\psi(i)}$, then
    $\k_{i+1} \leq \g_{\psi(i) + m}$ by (1), and
    \begin{align*}
      \h_{i + 1}  = f(\h_{i},\k_{i +1}) & \leq f(\f_{\psi(i)},\g_{\psi(i) +
        m})
      \leq f(\f_{\psi(i) + m},\g_{\psi(i) + m}) = \f_{\psi(i) + m
        +1}\,.
    \end{align*}
  \end{proofofclaim}
  If now we let  $\psi(i) \eqdef i(m +1)$, then 
  $\h_{i} \leq \f_{\psi(i)}$, for all $i \geq 0$, by induction on $i$
  and using part (2) of the Claim.
  Then 
  we 
  deduce that
  \begin{align*}
    \Mu{\langle f,g\rangle} & = \langle \h_{n},\k_{n+1}\rangle \\
    & \leq
    \langle \f_{\psi(n)},\g_{\psi(n) + m}\rangle
    \leq  \langle
    \f_{\psi(n) + m },\g_{\psi(n) + m}\rangle = \langle
    f,g\rangle^{\psi(n) + m}(\bot,\bot)\,,  
  \end{align*}
  showing that the function $\langle f,g \rangle$ converges to its
  \lfp in $\psi(n) + m = (n+1)(m+1) -1$
  steps.
\end{proof}
\begin{example}
  We argue that the upper bound $(n+1)(m+1)-1=(m+1)n+m$ given in
  Proposition~\ref{prop:bekicconvergence} is tight.
  For $m,n \geq 1$, let $P$ and $Q$ be respectively the $n+1$-element
  chain $\set{0 < 1<\ldots<n}$ and the $(n+1)m +1$-element chain
  $\set{0<1<\ldots<(n+1)m}$.  On these chains define the successor
  function $s$ by $s(x) = x +1$ if $x \neq \top$ and, otherwise,
  $s(\top) = \top$.
  If $y \in Q$, then it can be written in the form $zm + k$ for some 
  $0\leq k<m$ and $0\leq z\leq n+1$.
  Define the mappings $f\colon P \times Q \to P$
  and $g\colon P\times Q \to Q$ as follows:
  \begin{align*}
    f(x,zm+k) & = 
    \begin{cases}
      x\,, & \text{if $z\leq x$,}\\
      s(x)\,, & \text{otherwise},
    \end{cases} \\
    g(x,zm+k) & =
    \begin{cases}
      xm + k + 1\,, & \text{if $z \leq x$,} \\
      (x +1)m\,, & \text{otherwise}.
    \end{cases} 
  \end{align*}
  where $0\leq k<m$.  Both $f$ and $g$ are monotone (for example,
  $f(x,zm+k) = \max(x,\min(z,s(x)))$).  Consider now the mapping
  $\langle f,g\rangle : P\times Q \rto P\times Q$ and recall that
  $h(x) = f(x,\mu_{y}.g(x,y))$. The following holds:
  \ppfootnotesize
  \begin{align*}
    \mu_{y}.g(x,y) & = (x+1)m = [g_{x}]^{m}(\bot) \,,
    \quad h(x) = f(x,(x+1)m) = s(x)\,, 
    \quad \mu_{x}.h(x) = n = h^{n}(\bot)\,.
  \end{align*}
  \ppnormal
  It follows that
  $\Mu{\langle f,g\rangle}=(n, \mu_{y}.g(n,y))=(n, (n+1)m)$. Finally
  observe that
  \ppfootnotesize
  \begin{align*}
    \langle
    f,g\rangle^{(m+1)(n+1)-2}(\bot,\bot) & =(n,nm+m-1)
    < (n,
    (n+1)m)=\langle f,g\rangle^{(m+1)n+m}(\bot,\bot).
  \end{align*}
   \ppnormal
\end{example}

\begin{proposition}[Convergence for \eqref{eq:diag}]
  Let $f : P \times P \rto P$ be a monotone function and put
  $h(x) \eqdef \mu_{y}.f(x,y)$.  Let $n,m \geq 0$ be such that
  $h(x) = f_{x}^{m} (\bot)$, for each $x \in P$, and
  $\mu_{x}.h(x) = h^{n}(\bot)$.  Then
  $\mu_{x}.f(x,x) = f^{nm}(\bot,\bot)$.  That is,
  $\cl(f \circ \Delta) \leq \cl(h)\cl[y](f)$.
\end{proposition}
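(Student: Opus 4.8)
The plan is to reduce to a single monotone endofunction and imitate, in simplified form, the proof of Proposition~\ref{prop:bekicconvergence}. Write $g \eqdef f \circ \Delta$, so $g(x) = f(x,x)$ is monotone on $P$; since $\bot \leq g(\bot)$, the approximants $g^{k}(\bot)$ form an increasing chain, and a routine induction shows $g^{k}(\bot) \leq \mu.g$ whenever $\mu.g$ exists. By Lemma~\ref{lemma:diag} the hypotheses guarantee that $\mu_{x}.f(x,x)$ exists and equals $\mu_{x}.\mu_{y}.f(x,y) = \mu_{x}.h(x)$, which by assumption is $h^{n}(\bot)$; so $\mu.g = h^{n}(\bot)$. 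Consequently it suffices to establish the single inequality $h^{n}(\bot) \leq g^{nm}(\bot)$: together with $g^{nm}(\bot) \leq \mu.g = h^{n}(\bot)$ this forces $g^{nm}(\bot) = \mu.g = \mu_{x}.f(x,x)$, i.e.\ $\cl(g) \leq nm$.

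The heart of the matter is a claim playing the role of the Claim inside the proof of Proposition~\ref{prop:bekicconvergence}: for every $x \in P$ and every $j \geq 0$, if $x \leq g^{j}(\bot)$ then $h(x) \leq g^{j+m}(\bot)$. I would prove it by a sub-induction on $\ell$, showing $f_{x}^{\ell}(\bot) \leq g^{j+\ell}(\bot)$ for $\ell = 0,\ldots,m$. The case $\ell = 0$ is $\bot \leq g^{j}(\bot)$. For the inductive step, using monotonicity of $f$ in both arguments, the chain inequality $x \leq g^{j}(\bot) \leq g^{j+\ell}(\bot)$, and the sub-induction hypothesis $f_{x}^{\ell}(\bot) \leq g^{j+\ell}(\bot)$, we get
\[
 f_{x}^{\ell+1}(\bot) = f\big(x, f_{x}^{\ell}(\bot)\big) \leq f\big(g^{j+\ell}(\bot), g^{j+\ell}(\bot)\big) = g^{j+\ell+1}(\bot)\,.
\]
At $\ell = m$ this reads $h(x) = f_{x}^{m}(\bot) \leq g^{j+m}(\bot)$, where we invoke the hypothesis $h(x) = f_{x}^{m}(\bot)$. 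Then a second, outer induction on $i$ gives $h^{i}(\bot) \leq g^{im}(\bot)$ for all $i \geq 0$: the base case is trivial, and the step applies the claim with $x = h^{i}(\bot)$ and $j = im$. Specialising to $i = n$ yields $h^{n}(\bot) \leq g^{nm}(\bot)$, which finishes the argument, and taking $n, m$ minimal gives the stated bound $\cl(f\circ\Delta) \leq \cl(h)\,\cl[y](f)$.

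I do not expect a real obstacle: the proof is essentially bookkeeping, and it is actually easier than the \Bekic case because the diagonal collapses the two nested fixed points onto a single poset, so no product of posets and no reindexing function $\psi$ is needed. The only points that need a little attention are checking monotonicity of the relevant maps (for the chain $g^{j}(\bot) \leq g^{j+\ell}(\bot)$ and for $f$ in each variable) and being precise about which hypothesis is used where---$m$ enters only through $h(x) = f_{x}^{m}(\bot)$ holding uniformly in $x$, and $n$ only through $\mu_{x}.h(x) = h^{n}(\bot)$, the latter also supplying, via Lemma~\ref{lemma:diag}, the existence of $\mu_{x}.f(x,x)$ that makes the phrase ``approximants lie below the least fixed point'' meaningful.
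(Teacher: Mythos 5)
Your proof is correct and follows essentially the same route as the paper: the paper also obtains the bound $nm$ by specializing the Claim inside the proof of Proposition~\ref{prop:bekicconvergence} to the case $g=f$ (where the sequences collapse, $\h_i=\k_i$ and $\g_i=\f_i$, so one may take $\psi(i)=im$), which is exactly the content of your claim ``$x \leq g^{j}(\bot)$ implies $h(x)\leq g^{j+m}(\bot)$'' iterated $n$ times. The only difference is presentational---you unfold the argument self-contained on the diagonal of $P$ rather than citing the \Bekic{} proof's notation---and all the steps check out.
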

\begin{proof}
  An easy inspection shows that  $\cl(f\circ \Delta) = \cl(\langle
  f,f\rangle)$ and
  hence we refer back to
  Proposition~\ref{prop:bekicconvergence}. Consider
  $\f_{i},\g_{i},\k_{i},\h_{i}$ as defined in the proof of that
  Proposition. Here we have $g = f$, so $\g_{i} = \f_{i}$ for each
  $i \geq 0$, and moreover
  \begin{align*}
    \h_{i +1} & = f(\h_{i},\mu_{y}.g(\h_{i},y)) 
    = f(\h_{i},\mu_{y}.f(\h_{i},y)) = \mu_{y}.f(\h_{i},y) =
    \mu_{y}.g(\h_{i},y) = \k_{i+1}\,,
  \end{align*}
  so $\h_{i} = \k_{i}$ for each $i \geq 0$. According to the Claim in
  the proof of Proposition~\ref{prop:bekicconvergence},
  $\h_{i} \leq f_{\psi(i)}$ implies
  $\kappa_{i +1} \leq \g_{\psi(i) + m}$; 
  that is, $\h_{i +1} \leq \f_{\psi(i) + m}$ since $f=g$.
  Therefore, letting $\psi(i) \eqdef im$, we deduce
  $\h_{i} \leq \f_{\psi(i)}$ for all $i \geq 0$ which 
  implies that
  \begin{align*}
    \mu_{x}.f(x,x) &= \mu_{x}.\mu_{y}.f(x,y)\,, \tag*{by \eqref{eq:diag},}\\
    & = \mu_{x}.h(x)\,,
    \tag*{since $h(x) = \mu_{y}.f(x,y)$,}\\
    & = \h_{n}\,, \tag*{since $\h_{n} = h^{n}(\bot)$ and we assume
      that $\mu_{x}.h(x) =  h^{n}(\bot)$,}\\
    & \leq \f_{nm} \,, \tag*{since $\h_{n} \leq
      \f_{\psi(n)}$ and $\psi(n) = nm$,}
  \end{align*}
  as needed.
\end{proof}

\subsection{Results for \Ha{s}}
In many cases, formula \eqref{eq:bekicconvergence} 
given in Proposition~\ref{prop:bekicconvergence} does not yield a tight upper
bound.  
In particular this happens when we want to estimate the
convergence of \weaklynegative formulas whose \lfp{s} can be computed by
using the \Bekic property, as we have seen
in the previous Section~\ref{sec:weaklynegative}. 

In order to improve the upper bound given in
\eqref{eq:bekicconvergence}, we need the following observation.
\begin{lemma} 
  \label{lemma:pushdown}
  Let $\langle f,g\rangle : P\times Q \rto P\times Q$ be a monotone
  mapping,  put $h(x) \eqdef f(x,\mu_{y}.g(x,y))$, let $m,n \geq 0$ be
  such that $\mu_{y}.g(x,y) = g_{x}^{m}(\bot)$ for each $x \in P$ and
  $\mu_{x}.h(x) = h^{n}(\bot)$.
  Under these hypothesis we
  have 
  \begin{align*}
    \pi_{1}(\mu.\langle f,g \rangle) & = \pi_{1} (\langle f,g\rangle^{n(m+1)}(\bot,\bot))\,.  
  \end{align*}
\end{lemma}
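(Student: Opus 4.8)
The plan is to revisit the proof of Proposition~\ref{prop:bekicconvergence} and extract a sharper bound for the first coordinate alone. Recall from that proof the two sequences $\langle \f_{i},\g_{i}\rangle = \langle f,g\rangle^{i}(\bot,\bot)$ and $\h_{i} = h^{i}(\bot)$, $\k_{i+1} = (g_{\h_{i}})^{m}(\bot)$, together with the \Bekic identity $\Mu{\langle f,g\rangle} = \langle \h_{n},\k_{n+1}\rangle$; in particular $\pi_{1}(\Mu{\langle f,g\rangle}) = \h_{n}$. So the claim reduces to showing $\h_{n} \leq \f_{n(m+1)}$ (the reverse inequality $\f_{n(m+1)} \leq \h_{n}$ being automatic, since every approximant of $\langle f,g\rangle$ from below lies under the least fixed point and hence its first projection lies under $\h_n$). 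Note the saving: the bound from Proposition~\ref{prop:bekicconvergence} is $n(m+1)+m$, and we want to shave off the final $+m$.

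The key step is to strengthen part (2) of the Claim in the proof of Proposition~\ref{prop:bekicconvergence}. There one shows $\h_{i}\leq \f_{\psi(i)}$ implies $\h_{i+1}\leq \f_{\psi(i)+m+1}$, which with $\psi(i) = i(m+1)$ gives $\h_{i}\leq \f_{i(m+1)}$ for all $i$ by induction. This already yields $\h_{n}\leq \f_{n(m+1)}$ directly — so in fact the desired inequality is essentially in hand from the existing argument; one just reads off the first-coordinate consequence of the induction rather than composing once more with $g$ to control the second coordinate. Concretely: set $\psi(i) \eqdef i(m+1)$, prove $\h_{i}\leq \f_{\psi(i)}$ by induction on $i$ (base case trivial since $\h_0 = \f_0 = \bot$; inductive step is exactly part (2) of the Claim, which gives $\h_{i+1}\leq \f_{\psi(i)+m+1} = \f_{\psi(i+1)}$), and then instantiate at $i = n$ to get $\h_{n}\leq \f_{n(m+1)} = \pi_{1}(\langle f,g\rangle^{n(m+1)}(\bot,\bot))$.

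To finish, combine this with $\h_{n} = \pi_{1}(\Mu{\langle f,g\rangle})$ and the trivial inequality $\langle f,g\rangle^{n(m+1)}(\bot,\bot) \leq \Mu{\langle f,g\rangle}$ (monotonicity of iteration from $\bot$, since $\langle \bot,\bot\rangle$ is below any fixed point), whose first projection gives $\f_{n(m+1)} \leq \h_{n}$. The two inequalities together give the asserted equality. I do not anticipate a serious obstacle here: the whole content is that the $+m$ term in Proposition~\ref{prop:bekicconvergence} was only needed to push the \emph{second} coordinate $\k_{n+1} = \g_{\psi(n)+m}$ up to a genuine $\langle f,g\rangle$-approximant, and once we project onto the first coordinate that correction is no longer required. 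The only point to state carefully is that the reference to "part (2) of the Claim" is legitimate — it depends on part (1) of that Claim, which itself only uses the hypotheses $\mu_{y}.g(x,y) = g_{x}^{m}(\bot)$ and $\mu_{x}.h(x) = h^{n}(\bot)$ that we are also assuming here — so the proof can simply cite the Claim from Proposition~\ref{prop:bekicconvergence} verbatim.
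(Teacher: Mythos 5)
Your proposal is correct and follows essentially the same route as the paper's own proof: both extract $\pi_1(\mu.\langle f,g\rangle)=\h_n\leq \f_{n(m+1)}$ from the Claim in the proof of Proposition~\ref{prop:bekicconvergence} with $\psi(i)=i(m+1)$, and combine it with the trivial reverse inequality $\f_{n(m+1)}\leq\pi_1(\mu.\langle f,g\rangle)$ coming from the approximants lying below the least fixed point. Your reading of part (2) of that Claim (as concluding $\h_{i+1}\leq\f_{\psi(i)+m+1}$, despite the typo in its statement) is the intended one, so the citation is legitimate.
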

\begin{proof}
  Using the same notation as in the proof of
  Proposition~\ref{prop:bekicconvergence}, it is enough to observe
  that
  \begin{align*}
    \mu.\langle f,g\rangle & \leq \langle \f_{\psi(n)},\g_{\psi(n) +
      m}\rangle \leq \mu.\langle f,g\rangle\,,
  \end{align*}
  with $\psi(n) = n(m+1)$, so
  $\pi_{1} (\langle f,g\rangle^{\psi(n)}) = \f_{\psi(n)} =
  \pi_{1}(\mu.\langle f,g\rangle)$.
\end{proof}

By using the lemma, we are going to obtain the tight upper bound for the \lsol of system
of equations used for \weaklynegative \fterms.

\begin{proposition}
  \label{prop:convergencesystem}
  Consider a monotone
  $\langle f_{1},\ldots ,f_{k}\rangle : Q \times P^{k} \rto P^{k}$ and
  suppose that, for each $g : Q \times P^{k} \rto P$ in the cone
  generated by the functions
  $\set{ f_{1},\ldots ,f_{k} } \cup \set{\bot}$,
  $\cl[x_{i}](g) \leq 1$ for each $i =1,\ldots ,k$.  Then
  $\mu.\langle f_{1},\ldots ,f_{k} \rangle_{q} \leq \langle
  f_{1},\ldots ,f_{k} \rangle_{q}^{k}(\bot)$ for each $q \in Q$ or,
  said otherwise,
  $\cl[(x_{1},\ldots ,x_{k})](\langle f_{1},\ldots ,f_{k} \rangle)
  \leq k$.
\end{proposition}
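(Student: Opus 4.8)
The plan is to argue by induction on $k$. When $k=1$ the function $f_{1}$ is itself a member of the cone, so the hypothesis gives $\cl[x_{1}](f_{1})\leq 1$, i.e. $\cl((f_{1})_{q})\leq 1$ for every $q$, which is the claim. For the inductive step fix $q\in Q$ and write $F:=\langle f_{1},\dots ,f_{k}\rangle_{q}:P^{k}\rto P^{k}$; I will show $F^{k}(\bot)=\mu.F$, which is exactly $\cl(F)\leq k$.

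The key point is to extract a workable closed form for each coordinate $\mu_{i}:=\pi_{i}(\mu.F)$. Regard $\langle f_{l}\mid l\neq i\rangle$ as a monotone map $G_{i}:(Q\times P)\times P^{k-1}\rto P^{k-1}$, the extra $P$-factor carrying the now parametric variable $x_{i}$. One checks directly that the cone generated by $\set{f_{l}\mid l\neq i}\cup\set{\bot}$ is contained in the cone generated by $\set{f_{1},\dots ,f_{k}}\cup\set{\bot}$, so $G_{i}$ satisfies the hypothesis of the proposition with $k-1$ variables; by the induction hypothesis $\cl[(x_{l})_{l\neq i}](G_{i})\leq k-1$. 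Writing $\vec\nu(t)$ for the least fixed point of $G_{i}$ at parameter value $x_{i}=t$, this means $\vec\nu(t)=(G_{i})_{(q,t)}^{k-1}(\bot)$; consequently the function $t\mapsto f_{i}(q,\vec\nu(t))$ — where $f_{i}$ is applied to the $k$-tuple obtained from $\vec\nu(t)$ by inserting $t$ in slot $i$ — is a composite of members of the cone, hence itself a member of the cone, hence has closure ordinal $\leq 1$ in $x_{i}$. Applying the \Bekic principle (Lemma~\ref{lemma:bekic}) with $x_{i}$ against the remaining variables, and then using this last fact, yields both that $\mu.F$ exists (no completeness needed) and that $\mu_{i}=f_{i}(q,\vec\nu(\bot))$, again with $\bot$ inserted in slot $i$.

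It remains to bound $\vec\nu(\bot)$ by the approximants of $F$. Let $H_{i}:=(G_{i})_{(q,\bot)}:P^{k-1}\rto P^{k-1}$ be the subsystem obtained by freezing $x_{i}$ to $\bot$. A one-line induction on $n$, using only monotonicity of the $f_{l}$ together with $\bot\leq\pi_{i}(F^{n}(\bot))$, shows $H_{i}^{n}(\bot)\leq\pi_{-i}(F^{n}(\bot))$ for all $n$, where $\pi_{-i}$ collects the coordinates other than the $i$-th. Since $\vec\nu(\bot)=H_{i}^{k-1}(\bot)$, this gives $\vec\nu(\bot)\leq\pi_{-i}(F^{k-1}(\bot))$, and hence, by monotonicity of $f_{i}$ and $\bot\leq\pi_{i}(F^{k-1}(\bot))$,
\[
  \mu_{i}=f_{i}(q,\vec\nu(\bot))\leq f_{i}(q,F^{k-1}(\bot))=\pi_{i}(F^{k}(\bot)).
\]
As $i$ was arbitrary, $\mu.F\leq F^{k}(\bot)$; the opposite inequality is automatic, so $\mu.F=F^{k}(\bot)$ and $\cl(F)\leq k$.

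I expect the delicate part to be the correct use of the \emph{full} strength of the hypothesis: it is not enough that the generators $f_{i}$ converge in one step in each variable, one needs this for every function in the cone. This is used both to see that each $G_{i}$ still satisfies the hypothesis, so the induction goes through, and, via the induction hypothesis, to guarantee that $t\mapsto f_{i}(q,\vec\nu(t))$ is again a cone function and hence converges in a single step — which is precisely what upgrades the bound $2k-1$ obtainable by naively iterating Proposition~\ref{prop:bekicconvergence} and Lemma~\ref{lemma:pushdown} to the sharp value $k$. Some care is also needed in keeping track of which coordinates play the role of fixed-point variables and which play the role of parameters when the induction hypothesis is invoked on $G_{i}$.
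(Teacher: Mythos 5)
Your proof is correct and follows essentially the same route as the paper's: induction on $k$, a B\'ekic{} decomposition singling out one coordinate, the cone hypothesis applied to the resulting diagonal function $t\mapsto f_{i}(q,\vec\nu(t))$ to get one-step convergence, and the induction hypothesis for the $(k-1)$-subsystem. The only difference is that where the paper invokes Lemma~\ref{lemma:pushdown} (with $n=1$, $m=k-1$) to relate $\mu_{i}$ to $\pi_{i}(F^{k}(\bot))$, you re-derive its content inline via the comparison $H_{i}^{n}(\bot)\leq\pi_{-i}(F^{n}(\bot))$.
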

\begin{proof}
  The proof is by induction on $k \geq 1$. When $k = 1$ then,
  $\cl[x_{1}](f_{1}) \leq 1$ by assumption.

  Now suppose that $k > 1$ and that the property holds for all
  motone functions
  $\langle f_{i_{1}},\ldots ,f_{i_{l}}\rangle : Q \times P^{\ell} \rto
  P^{\ell}$ with $\ell < k$.

  By the induction hypothesis,
  $\cl[(x_{2},\ldots ,x_{k})](\langle f_{2},\ldots ,f_{k}\rangle)
  \leq k -1$.  
  For each $q\in Q$ consider the function $h_q$ defined
  by 
  $h_q(x_{1}) \eqdef f_{1}(q,x_{1},\langle f_{2},\ldots
  ,f_{k}\rangle_{(q,x_{1})}^{k}(\bot))$; $h_q$ belongs to the cone
  generated by $\set{ f_{1},\ldots ,f_{k} } \cup \set{\bot}$ and
  therefore $\cl[x_{1}](h_q) \leq 1$ by assumption.  We can therefore
  apply Lemma~\ref{lemma:pushdown} (with 
  $f=h_q$, $g=\langle f_{2},\ldots ,f_{k}\rangle$, $n =1$ and
  $m = k-1$) to deduce that, for each $q \in Q$,
  \begin{align*}
    \pi_{1}(\mu.\langle f_{1},\ldots ,f_{k}\rangle_{q})\leq
      \pi_{1}(\langle f_{1},\ldots ,f_{k}\rangle_{q}^{1\cdot(k-1 +1)})(\bot) = \pi_{1}(\langle
    f_{1},\ldots ,f_{k}\rangle_{q}^{k}(\bot))\,.
  \end{align*}
  In a similar way we deduce 
  \begin{align*}
    \pi_{i}(\mu.\langle f_{1},\ldots ,f_{k}\rangle_{q}) & \leq
    \pi_{i}(\langle f_{1},\ldots ,f_{k}\rangle_{q}^{k}(\bot))\,,
  \end{align*}
  for each $i = 1,\ldots ,k$, and therefore
  $\mu.\langle f_{1},\ldots ,f_{k}\rangle_{q} \leq \langle
  f_{1},\ldots ,f_{k}\rangle_{q}^{k}(\bot)$.
\end{proof}

To see that the bound given in the previous proposition is tight it is
enough to compute the least solution of the system of equations
\begin{align*}
  & \set{ x_{i} = \set{a_{i}} \cup x_{i-1\!\!\!\!\!\mod k} \mid i = 0,\ldots ,k-1 }\,,
\end{align*}
in the powerset of $P(\set{a_{1},\ldots ,a_{k}})$.
\medskip

We can finally give a better upper bound to closure ordinals of
\weaklynegative \fterms.
\begin{proposition}
  \label{prop:convergesforwefs}
  Let $\phi(x)$ be a \weaklynegative \fterm, so that we have a
  decomposition of the form \eqref{eq:wndecomposition}.  Then
  $\phi(x)$ converges to its \lfp in at most $n + 1$ steps.
\end{proposition}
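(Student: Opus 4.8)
The plan is to combine the description of $\mu_{x}.\phi$ from Proposition~\ref{prop:weaklynegative} with the convergence estimates for \roll\ (Proposition~\ref{prop:convrolling}) and for systems of equations (Proposition~\ref{prop:convergencesystem}), the latter read over the opposite algebra. Fix a \Ha $H$ and a valuation of the variables of $\phi$ other than $x$; following the proof of Proposition~\ref{prop:weaklynegative}, put $\Pf_{0} = \eval{\psi_{0}} : [H^{op}]^{n} \rto H$ and $\Pf_{i} = \eval{\psi_{i}} : H \rto H^{op}$, so that these maps are monotone and $\eval{\phi} = \Pf_{0} \circ \langle \Pf_{1},\ldots ,\Pf_{n}\rangle$. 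With $\bar{\Pf} = \langle \Pf_{1},\ldots ,\Pf_{n}\rangle \circ \Pf_{0} : [H^{op}]^{n} \rto [H^{op}]^{n}$, Proposition~\ref{prop:convrolling} (applied with $f = \Pf_{0}$ and $g = \langle \Pf_{1},\ldots ,\Pf_{n}\rangle$) gives $\cl(\eval{\phi}) \leq 1 + \cl(\bar{\Pf})$, all the fixed points involved existing by Proposition~\ref{prop:total}. So it suffices to prove $\cl(\bar{\Pf}) \leq n$, i.e. that the least fixed point of $\bar{\Pf}$ in $[H^{op}]^{n}$---equivalently, the greatest solution in $H^{n}$ of the system $\set{ y_{i} = \psi_{i}(\psi_{0}(y_{1},\ldots ,y_{n})) }$---is reached after $n$ iterations from $(\top ,\ldots ,\top)$.

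For this I would apply Proposition~\ref{prop:convergencesystem} with $P := H^{op}$, $k := n$, trivial parameter space (the valuation of the remaining variables being fixed), and $i$-th component $f_{i} := \Pf_{i} \circ \Pf_{0}$; these are monotone maps $[H^{op}]^{n} \rto H^{op}$ and $\bar{\Pf} = \langle f_{1},\ldots ,f_{n}\rangle$. It remains to check the hypothesis: every $g$ in the cone generated by $\set{ f_{1},\ldots ,f_{n} } \cup \set{\bot}$ satisfies $\cl[y_{j}](g) \leq 1$ for each $j$. Since joins in $P = H^{op}$ are meets of $H$, such a $g$ is, read in $H$, either the constant $\top$ or a finite meet $\bigwedge_{i \in S} \psi_{i}(\psi_{0}(\vec{y}))$; as a function of a single variable $y_{j}$ it is the interpretation of the \fterm $\bigwedge_{i \in S} \psi_{i}(\psi_{0}(\vec{y}))$, hence a polynomial on $H$, and it is moreover monotone in $y_{j}$ (an occurrence of $y_{j}$ sits negatively inside $\psi_{0}$, which in turn occupies a negative position of $\psi_{i}$, and a meet of monotone maps is monotone). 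By Proposition~\ref{prop:Peirce} it is compatible, hence strong, and Proposition~\ref{prop:phitop} gives $g^{2}(\top) = g(\top)$; since least fixed points in $P = H^{op}$ are computed from $\bot_{P} = \top$, this is exactly $\cl[y_{j}](g) \leq 1$. Proposition~\ref{prop:convergencesystem} then yields $\cl(\bar{\Pf}) \leq n$, whence $\cl(\eval{\phi}) \leq n + 1$; as $H$ and the valuation were arbitrary, $\phi$ converges to its \lfp in at most $n+1$ steps.

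The bulk of the work is bookkeeping around the two occurrences of $(-)^{op}$: verifying that monotonicity and compatibility pass through the opposite algebra, and that ``convergence in one step'' over $H^{op}$ is literally the identity $g^{2}(\top) = g(\top)$ of Proposition~\ref{prop:phitop}. The conceptual point---what makes the answer $n+1$ rather than the roughly quadratic bound that a direct use of \Bekic\ (Proposition~\ref{prop:bekicconvergence}) would give---is that each coordinate of the system, taken in each variable separately, is a strong function on a bounded lattice, so its greatest fixed point is reached in a single step; this is precisely what makes the hypothesis of Proposition~\ref{prop:convergencesystem} hold with its sharp bound $k$.
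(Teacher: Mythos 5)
Your proposal is correct and follows essentially the same route as the paper: \texttt{Roll} together with Proposition~\ref{prop:convrolling} reduces the bound to the convergence of the system $\langle \psi_i\rangle\circ\psi_0$ over $[H^{op}]^n$, which is handled by Propositions~\ref{prop:phitop} and~\ref{prop:convergencesystem}. The only difference is that you spell out the verification of the cone hypothesis of Proposition~\ref{prop:convergencesystem} (each member of the cone being a strong monotone polynomial, hence converging in one step from $\top$), which the paper leaves implicit.
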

\begin{proof}
  By combining Propositions~\ref{prop:phitop}
  and~\ref{prop:convergencesystem}, we have
  \begin{align}
    \nu.(\langle \psi_{i} \mid
    i = 1,\ldots ,n \rangle \circ \psi_{0}) & = (\langle \psi_{i} \mid
   i = 1,\ldots ,n \rangle \circ \psi_{0})^{n}(\top)\,.
   \label{eq:munu2}
 \end{align}
 Considering that
 \begin{align*}
   \mu.\phi & = \mu.(\psi_{0} \circ \langle \psi_{i} \mid
   i = 1,\ldots ,n \rangle) 
   = \psi_{0} (\nu.(\langle \psi_{i} \mid
      i = 1,\ldots ,n \rangle \circ \psi_{0}))
   \end{align*}
   we can use equation \eqref{eq:munu2} and
   Proposition~\ref{prop:convrolling} to deduce that
   \begin{align*}
     \mu.\phi & = (\psi_{0} \circ \langle \psi_{i} \mid
     i = 1,\ldots ,n \rangle)^{n+1}(\bot) \,.
     \tag*{\qedhere}
   \end{align*}
\end{proof}

We can expect that other formulas for \fp{s} have a counterpart with
closure ordinals. This is the case for
equation~\eqref{eq:muconjunction}.  To give an account of it, we
firstly prove a a Lemma.
\begin{lemma}
  \label{lemma:conjunctionconvergence}
  Let $H$ be a \HA and let $\Pf$ and $\g$ be monotone polynomials
  on $H$. For every pair of natural numbers $n,m$ such that
  $n + m \geq 1$,
  $\Pf^n(\bot) \land \Pg^m(\bot)\leq (\Pf\land \Pg)^{n +
    m-1}(\bot)$.
\end{lemma}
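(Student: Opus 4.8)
The plan is to argue by induction on $n + m \geq 1$, using the strongness of monotone polynomials (Proposition~\ref{prop:Peirce}) in the form of the compatibility equation~\eqref{eq:compatible}, i.e. $x \land \Pf(y) = x \land \Pf(x \land y)$. The base case $n + m = 1$ splits into $(n,m) = (1,0)$ and $(n,m) = (0,1)$; in each of these the right-hand side is $(\Pf \land \Pg)^{0}(\bot) = \bot$ paired against, say, $\Pf(\bot) \land \Pg^{0}(\bot) = \Pf(\bot) \land \bot = \bot$, so the inequality is trivially an equality. (One must be a little careful here that $\Pg^{0}(\bot) = \bot$, so the meet collapses; this is why the hypothesis is $n + m \geq 1$ rather than $n, m \geq 1$.)

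For the inductive step, assume the claim for all pairs summing to $n + m - 1 \geq 1$ and consider a pair with $n + m \geq 2$; without loss of generality $n \geq 1$. The idea is to peel off one application of $\Pf$ from the left-hand term and reorganize using compatibility:
\begin{align*}
  \Pf^{n}(\bot) \land \Pg^{m}(\bot)
  & = \Pf\bigl(\Pf^{n-1}(\bot)\bigr) \land \Pg^{m}(\bot) \\
  & = \Pf\bigl(\Pf^{n-1}(\bot) \land \Pg^{m}(\bot)\bigr) \land \Pg^{m}(\bot)\,,
\end{align*}
where the second equality is compatibility~\eqref{eq:compatible} applied with $x = \Pg^{m}(\bot)$ (and using commutativity of $\land$). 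Now I would like to bound $\Pf^{n-1}(\bot) \land \Pg^{m}(\bot)$ by the induction hypothesis: when $(n-1) + m \geq 1$ this gives $\Pf^{n-1}(\bot) \land \Pg^{m}(\bot) \leq (\Pf \land \Pg)^{n+m-2}(\bot)$, and since $\Pf$ is monotone and $\Pg^{m}(\bot) \geq (\Pg \land \Pf)(\text{anything below it})$ — more precisely using $\Pg^{m}(\bot) \geq \Pg\bigl((\Pf\land\Pg)^{n+m-2}(\bot)\bigr)$, which holds because $(\Pf\land\Pg)^{n+m-2}(\bot) \leq \Pg^{m-1}(\bot)$ when $m \geq 1$, or handling $m = 0$ separately — one assembles
\begin{align*}
  \Pf\bigl(\Pf^{n-1}(\bot) \land \Pg^{m}(\bot)\bigr) \land \Pg^{m}(\bot)
  & \leq \Pf\bigl((\Pf\land\Pg)^{n+m-2}(\bot)\bigr) \land \Pg\bigl((\Pf\land\Pg)^{n+m-2}(\bot)\bigr) \\
  & = (\Pf \land \Pg)\bigl((\Pf\land\Pg)^{n+m-2}(\bot)\bigr) = (\Pf\land\Pg)^{n+m-1}(\bot)\,.
\end{align*}

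The main obstacle I anticipate is the bookkeeping around the boundary case $m = 0$ (and symmetrically $n = 0$): when $m = 0$ one cannot write $\Pg^{m}(\bot) \geq \Pg(\text{something})$, so the symmetric trick in the displayed chain needs a separate, simpler argument (in that case $\Pf^{n}(\bot) \land \bot$ vs.\ $(\Pf\land\Pg)^{n-1}(\bot)$, or one exploits that $\Pf^{n}(\bot) \leq (\Pf \land \Pg)^{\,?}$ fails, so really the $m=0$ case reduces to showing $\Pf^{n}(\bot) \land \bot = \bot \leq \cdots$, which is trivial). So the honest structure is: dispose of $\min(n,m) = 0$ trivially, and run the compatibility-plus-induction argument above only when both $n, m \geq 1$, where the induction hypothesis applies cleanly to $(n-1, m)$. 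I would also double-check that compatibility is being invoked only for genuine polynomials, which it is, since $\Pf$ and $\Pg$ are assumed to be monotone polynomials and Proposition~\ref{prop:Peirce} guarantees~\eqref{eq:compatible}.
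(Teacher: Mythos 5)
Your overall strategy---induction on $n+m$, a trivial base case, and strongness/compatibility driving the inductive step---is exactly the paper's, and your handling of the base case and of the boundary $\min(n,m)=0$ is fine. The gap is in the displayed inequality of the inductive step. After one application of compatibility you are left with $\Pf\bigl(\Pf^{n-1}(\bot)\land\Pg^{m}(\bot)\bigr)\land\Pg^{m}(\bot)$, and to pass to $\Pf\bigl((\Pf\land\Pg)^{n+m-2}(\bot)\bigr)\land\Pg\bigl((\Pf\land\Pg)^{n+m-2}(\bot)\bigr)$ you must bound the \emph{bare} second conjunct from above: you need $\Pg^{m}(\bot)\leq\Pg\bigl((\Pf\land\Pg)^{n+m-2}(\bot)\bigr)$. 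What you assert is the opposite inequality ($\geq$), which cannot be used to majorize a conjunct, and the auxiliary claim you offer in support, $(\Pf\land\Pg)^{n+m-2}(\bot)\leq\Pg^{m-1}(\bot)$, is false in general: from $\Pf\land\Pg\leq\Pg$ one only gets $(\Pf\land\Pg)^{k}(\bot)\leq\Pg^{k}(\bot)$, and iterates from $\bot$ increase, so there is no reason for a long iterate of the meet to sit below a shorter iterate of $\Pg$ alone, nor for the converse. For instance, with $\Pf$ the constant polynomial $\bot$ and $\Pg(y)=(y\impl b)\impl a$, taking $n=1$, $m=2$, the inequality you need reads $\Pg^{2}(\bot)\leq\Pg(\bot)$, i.e.\ $(a\impl b)\impl a\leq a$, which is Peirce's law and fails in non-Boolean Heyting algebras (the lemma itself still holds there, since the left-hand side is $\bot$).

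The repair is to never leave a conjunct bare: using idempotency of $\land$, write $\Pf^{n}(\bot)\land\Pg^{m}(\bot)$ as the meet of two copies of itself and apply strongness to \emph{both} $\Pf$ and $\Pg$, obtaining $\Pf\bigl(\Pf^{n-1}(\bot)\land\Pg^{m}(\bot)\bigr)\land\Pg\bigl(\Pf^{n}(\bot)\land\Pg^{m-1}(\bot)\bigr)$. The induction hypothesis now applies to both pairs $(n-1,m)$ and $(n,m-1)$, each summing to $n+m-1$, and the two conjuncts recombine into $(\Pf\land\Pg)\bigl((\Pf\land\Pg)^{n+m-2}(\bot)\bigr)=(\Pf\land\Pg)^{n+m-1}(\bot)$. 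This is the paper's argument. Equivalently, your asymmetric step can be salvaged by applying compatibility a second time, pushing $\Pg^{m}(\bot)$ under $\Pg$ relative to the first conjunct before invoking the induction hypothesis---but that is just the symmetric computation done in two moves; bounding the untouched $\Pg^{m}(\bot)$ directly, as you propose, does not work.
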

\begin{proof}
  Let $\Ph$ denote the polynomial $\Pf \wedge \Pg$ on $H$. We prove
  the result by induction on $k = n + m\geq 1$.

  If $n + m = 1$, then either $n = 0$ or $m = 0$. In this case either
  $\Pf^{n}(\bot) = \bot $ or $\Pg^{m}(\bot) = \bot$, so
  $\Pf^{n}(\bot) \land \Pg^{m}(\bot) = \bot$, so the result is
  obvious.
  
  Now suppose that the result holds for any pair of numbers $n',m'$
  such that $1 \leq n'+m' \leq k$. Let $m$ and $n$ be such that
  $m+n=k+1$. The following holds:
  \begin{align*}
    \Pf^n(\bot)\wedge \Pg^{m}(\bot) = &\Pf^n(\bot)\wedge \Pg^{m}(\bot)\wedge\Pf^n(\bot)\wedge \Pg^{m}(\bot)\\
    &\leq\Pf(\Pf^{n-1}(\bot)\wedge \Pg^m(\bot))\wedge
    \Pg(\Pf^n(\bot)\wedge \Pg^{m-1}(\bot))\,,
    \tag*{using strongness,}
    \\
    &\leq \Pf(\Ph^{k-1}(\bot)) \wedge
    \Pg(\Ph^{k-1}(\bot))=\Ph^k(\bot)=\Ph^{n+m-1}(\bot)\,.  \tag*{\qedhere}
  \end{align*}
\end{proof}
Next we show that $\cl(\Pf \land \Pg) < \cl(\Pf) + \cl(\Pg)$.
This relation holds when $\cl(\Pf) + \cl(\Pg) > 0$; in order to settle
trivial cases, we let $\Ph^{k}(\bot) = \bot$ for $k < 0$ in the
statement of the  Proposition below.
\begin{proposition}
  \label{prop:conjunctionconvergence}
  Let $H$ be an \HA. If $\Pf$ and $\Pg$ are monotone polynomials on $H$
  such that $\mu_{x}.\Pf(x) = \Pf^{m}(\bot)$ and
  $\mu_{x}.\Pg(x) = \Pg^{n}(\bot)$, then
  $\mu_{x}.(\Pf\wedge \Pg)(x) = (\Pf\wedge \Pg)^{m+n-1}(\bot)$.  That is,
  $\cl(\Pf \land \Pg) \leq \cl(\Pf) + \cl(\Pg) -1$.
\end{proposition}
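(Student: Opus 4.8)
The plan is to read the result off from Corollary~\ref{cor:distrconj} together with Lemma~\ref{lemma:conjunctionconvergence}; almost no new work is required beyond those two ingredients. First I would note that, since by hypothesis $\mu_{x}.\Pf(x)$ and $\mu_{x}.\Pg(x)$ both exist, Corollary~\ref{cor:distrconj} applies and turns its colon-equality into a genuine equality: $\mu_{x}.(\Pf \land \Pg)(x)$ exists and equals $\mu_{x}.\Pf(x) \land \mu_{x}.\Pg(x) = \Pf^{m}(\bot) \land \Pg^{n}(\bot)$, using the assumed values of the two closure ordinals.

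Next I would establish the two inequalities that sandwich $(\Pf \land \Pg)^{m+n-1}(\bot)$ between $\mu_{x}.(\Pf \land \Pg)(x)$ and itself. For the direction $\mu_{x}.(\Pf \land \Pg)(x) \leq (\Pf \land \Pg)^{m+n-1}(\bot)$, I would invoke Lemma~\ref{lemma:conjunctionconvergence} with the pair $m,n$ when $m + n \geq 1$, which gives exactly $\Pf^{m}(\bot) \land \Pg^{n}(\bot) \leq (\Pf \land \Pg)^{m+n-1}(\bot)$; the remaining case $m = n = 0$ is trivial, since then both sides are $\bot$ under the stated convention $\Ph^{k}(\bot) = \bot$ for $k < 0$. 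For the reverse direction, I would simply use that every approximant $(\Pf \land \Pg)^{k}(\bot)$ lies below the \lfp $\mu_{x}.(\Pf \land \Pg)(x)$, a general fact about monotone functions. Chaining the two inequalities yields $\mu_{x}.(\Pf \land \Pg)(x) = (\Pf \land \Pg)^{m+n-1}(\bot)$, which is the displayed equality and hence gives $\cl(\Pf \land \Pg) \leq \cl(\Pf) + \cl(\Pg) - 1$.

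I do not anticipate a genuine obstacle here: the substance is packaged in Lemma~\ref{lemma:conjunctionconvergence}, whose proof rests on strongness of monotone polynomials (Proposition~\ref{prop:Peirce}). The only points that call for a little care are making sure the existence clause of Corollary~\ref{cor:distrconj} is discharged from the hypotheses, and dispatching the boundary cases where $m + n - 1$ is $0$ or negative, for which the convention fixed just before the statement is exactly what is needed.
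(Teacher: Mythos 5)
Your proposal is correct and follows essentially the same route as the paper: the paper's proof likewise sandwiches $(\Pf\land\Pg)^{m+n-1}(\bot)$ between the \lfp\ and itself, using Corollary~\ref{cor:distrconj} to rewrite $\mu_{x}.(\Pf\land\Pg)(x)$ as $\Pf^{m}(\bot)\land\Pg^{n}(\bot)$ and Lemma~\ref{lemma:conjunctionconvergence} for the nontrivial inequality. Your explicit treatment of the degenerate case $m=n=0$ via the stated convention is a small but welcome addition that the paper leaves implicit.
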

\begin{proof}
Let $\Ph(x) = \Pf(x) \land \Pg(x)$ and compute as follows:
\begin{align*}
  \Ph^{n +m -1}(\bot)
  & \leq \mu_{x}.\Ph(x) = \mu_{x}.(\Pf(x) \land \Pg(x)) \\
  & = \mu_{x}.\Pf(x) \land \mu_{x}.\Pg(x)\,,
  \tag*{by Proposition~\ref{cor:distrconj},} \\
  & = \Pf^{n}(\bot) \land \Pg^{m}(\bot) \\
  & \leq \Ph^{n+m -1}(\bot) \,,
  \tag*{by Proposition~\ref{lemma:conjunctionconvergence},}
\end{align*}
so we have 
the equality $\mu_{x}.\Ph(x)=\Ph^{n +m -1}(\bot)$.
\end{proof}

\begin{proposition}
  \label{prop:conjubtight}
  The upper bound $m+n-1$ given in
  Proposition~\ref{prop:conjunctionconvergence} is tight.
\end{proposition}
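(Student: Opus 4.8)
The plan is to reduce the statement to a single closure-ordinal computation and then carry it out with an explicit Kripke model. Since Proposition~\ref{prop:conjunctionconvergence} already gives $\cl(\Pf\land\Pg)\le \cl(\Pf)+\cl(\Pg)-1$, it suffices, for each $m,n\ge 1$, to exhibit monotone polynomials $\Pf,\Pg$ with $\cl(\Pf)=m$, $\cl(\Pg)=n$ and $\cl(\Pf\land\Pg)\ge m+n-1$. I claim the right objects are conjunctions of the ``closure-$2$'' gadget $a\vee(d\impl x)$: working in $\FH[\set{a_1,d_1,\ldots ,a_k,d_k}]$, put $\Pf_i(x)\eqdef a_i\vee(d_i\impl x)$ and $\Ph_k(x)\eqdef\bigwedge_{i=1}^{k}\Pf_i(x)$, and let the whole proposition follow from the assertion $\cl(\Ph_k)=k+1$ (for $k\ge 1$; and $\Ph_0=\top$, $\cl(\top)=1$). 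Indeed, setting $k=m+n-2$ and viewing everything inside $\FH[\set{a_1,d_1,\ldots ,a_{m+n-2},d_{m+n-2}}]$, the polynomials $\Pf\eqdef\bigwedge_{i=1}^{m-1}\Pf_i$ and $\Pg\eqdef\bigwedge_{i=m}^{m+n-2}\Pf_i$ satisfy $\cl(\Pf)=m$, $\cl(\Pg)=n$ (the approximants of each involve only its own generators), $\Pf\land\Pg=\Ph_{m+n-2}$, and hence $\cl(\Pf\land\Pg)=m+n-1$.

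\textbf{The gadget and the upper bound.} First I would settle $\Pf_i$: a short computation in $\FH[\set{a_i,d_i}]$ gives $\Pf_i(\bot)=a_i\vee\neg d_i$ and, using $d_i\impl z=d_i\impl(d_i\land z)$ and $d_i\land(a_i\vee\neg d_i)=d_i\land a_i$, also $\Pf_i^{2}(\bot)=a_i\vee(d_i\impl a_i)=d_i\impl a_i=\Pf_i^{3}(\bot)$, so $\cl(\Pf_i)\le 2$; the two-point Kripke model forcing $a_i$ and $d_i$ exactly at its top point separates $\Pf_i(\bot)$ from $\Pf_i^{2}(\bot)$, whence $\cl(\Pf_i)=2$. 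Iterating Proposition~\ref{prop:conjunctionconvergence} then gives the upper bound $\cl(\Ph_k)\le\sum_{i=1}^{k}\cl(\Pf_i)-(k-1)=k+1$.

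\textbf{The lower bound.} By Corollary~\ref{cor:distrconj}, $\mu.\Ph_k=\bigwedge_{i=1}^{k}(d_i\impl a_i)$; and chaining the inequalities $\Ph_k(y)\le\Pf_i(y)$ with monotonicity yields $\Ph_k^{k}(\bot)\le\Pf_1(\Pf_2(\cdots\Pf_k(\bot)\cdots))=S_k$, where $S_k\eqdef a_1\vee(d_1\impl(a_2\vee(d_2\impl(\cdots(a_k\vee\neg d_k)\cdots))))$. So it suffices to produce a Kripke model with a world forcing every $d_i\impl a_i$ but not $S_k$. I would use the $(k+1)$-element chain $r_1<r_2<\cdots<r_{k+1}$ in which $a_i$ and $d_i$ are forced exactly at $\set{r_{i+1},\ldots ,r_{k+1}}$: each $d_i\impl a_i$ is then forced everywhere, in particular at $r_1$, while, setting $T_{k+1}=\bot$ and $T_j=a_j\vee(d_j\impl T_{j+1})$ so that $S_k=T_1$, a downward induction on $j$ shows $T_j$ is not forced at $r_j$ (the point being that $a_j$ is not forced at $r_j$, whereas $d_j$ is forced at $r_{j+1}\ge r_j$, where $T_{j+1}$ fails by the inductive hypothesis). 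Hence $S_k$, and therefore $\Ph_k^{k}(\bot)$, fails at $r_1$, so $\Ph_k^{k}(\bot)\ne\mu.\Ph_k$ and $\cl(\Ph_k)\ge k+1$, which with the upper bound gives $\cl(\Ph_k)=k+1$.

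\textbf{Main obstacle.} The only real work is the last step, and within it the only delicate point is pinning down the forcing pattern of the chain so that the staircase $S_k$ collapses at the bottom while all the implications $d_i\impl a_i$ survive there; the cases $k=1$ (the separating model for $\Pf_i$) and $k=2$ already exhibit the pattern, and the general statement is the routine induction sketched above.
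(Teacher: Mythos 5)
Your proof is correct, and it reaches the tightness claim by a genuinely different family of witnesses than the paper. The paper works inside a Heyting algebra containing a chain $\bot\leq a_{0}<\dots<a_{k}=\top$ and uses the \emph{\weaklynegative} gadgets $\Pf_{j}(x)=(x\impl a_{j-1})\impl a_{j}$, for which it computes all approximants of the conjunction exactly ($\Pf^{i}_{a_{0},\dots,a_{k-1}}(\bot)=a_{i}$, so the conjunction climbs the chain one rung per iteration); the splitting into $\Pf_{a_{0},\dots,a_{n-1}}\land\Pf_{a_{n-1},\dots,a_{n+m-2}}$ then matches your splitting of $\Ph_{m+n-2}$ into two blocks. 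You instead use the \emph{disjunctive} gadgets $a_{i}\vee(d_{i}\impl x)$ in the free Heyting algebra, get the upper bound $k+1$ by iterating Proposition~\ref{prop:conjunctionconvergence}, and get the lower bound by majorizing $\Ph_{k}^{k}(\bot)$ by the staircase $\Pf_{1}(\cdots\Pf_{k}(\bot)\cdots)$ and refuting it at the root of a $(k+1)$-element Kripke chain while all $d_{i}\impl a_{i}$ hold there. Both arguments are sound and of comparable length; the paper's buys an exact closed form for every approximant by pure order-theoretic computation in a chain, while yours buys witnesses that are \stronglypositive (indeed conjunctions of disjunctive formulas in the sense of Definition~\ref{def:disjunctive}), which ties the tight example to the formulas studied in Sections~\ref{sec:procedure} and~\ref{sec:ruitenburg}, at the cost of a semantic (Kripke) separation step. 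One small presentational point: your reduction needs $m,n\geq 1$ (as does the paper's), and your parenthetical treatment of the empty conjunction for $m=1$ handles the boundary correctly.
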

\begin{proof}
  Observe that if $H$ is a \HA which is a chain, then
  $x \impl a = \top$, if $x \leq a$, and $x \impl a = a$, otherwise.
  If $H$ is such an \HA which contains the chain
  $\bot \leq a_{0}<a_1<a_2<\ldots<a_{k-1} < a_k=\top$, let 
  \begin{align*}
    \Pf_{j}(x) & \eqdef (x \impl a_{j -1}) \impl a_{j}\,, \quad
    \text{for $j =1,\ldots ,k -1$.}
  \end{align*}
  We have then, for each $i,j$ with $0\leq i\leq k$ and $1 \leq j < k$,
  \begin{align*}
    \Pf_{j}(a_{i}) & = (a_{i} \impl a_{j-1}) \impl a_{j} =
    \begin{cases}
      a_{j} & i < j\,, \\
      \top & i \geq j\,.
    \end{cases}
  \end{align*}
  Define then
  \begin{align*}
    \Pf_{a_{0},a_{1},\ldots ,a_{k-1}}(x) & \eqdef \bigwedge_{j =1,\ldots ,k-1}
    \Pf_{j}(x)\,.
  \end{align*}
  \begin{claim*} For each $i = 1,\ldots ,k$ we have 
    \begin{align*}
      \Pf_{a_{0},a_{1},\ldots a_{k-1}}^{i}(\bot) & = a_{i}\,.
    \end{align*}
  \end{claim*}
  \begin{proof}[Proof of Claim.]
    The relation trivially holds for $i = 1$. Assuming it
    holds for $i$, we have
    \begin{align*}
      \Pf^{i+1}_{a_{0},a_{1},\ldots ,a_{k-1}}(\bot) & = \bigwedge_{j
        =1,\ldots ,k-1} \Pf_{j}(a_{i})
      = \bigwedge_{i < j \leq k -1} \Pf_{j}(a_{i}) = \bigwedge_{i < j
        \leq k-1} a_{j} = a_{i +1}\,.
    \end{align*}
    Observe that the above relation holds alse when $i+1 = k$, in
    which case $\set{j \mid i < j \leq k-1} = \emptyset$, so the meet
    above is empty, so equal to $\top = a_{k}$.
  \end{proof}
  It follows from the Claim that
  $\mu_{x}.\Pf_{a_{0},a_{1},\ldots ,a_{k-1}}(x) =
  \Pf^{k}_{a_{0},a_{1},\ldots ,a_{k-1}}(\bot) = \top > a_{k-1} =
  \Pf^{k-1}(\bot)$.

  Now assume that $H$ contains the chain
  $\bot \leq a_{0}<a_1<a_2<\ldots<a_{m+n-2}<a_{m+n-1}=\top$. We have then
  \begin{align*}
    \Pf_{a_{0},a_{1},\ldots ,a_{n+m -2}}(x) & =
    \Pf_{a_{0},a_{1},\ldots ,a_{n-1}}(x) \land
    \Pf_{a_{n-1},a_{n},\ldots ,a_{n + m -2}}(x)\,, 
  \end{align*}
  with
  \begin{align*}
    \mu_{x}.\Pf_{a_{0},a_{1},\ldots ,a_{n-1}}(x) & =
    \Pf^{n}_{a_{0},a_{1},\ldots
      ,a_{n-1}}(\bot)\,,\\
    \mu_{x}.\Pf_{a_{n-1},a_{n},\ldots ,a_{n + m -2}}(x) & =
    \Pf^{m}_{a_{n-1},a_{n},\ldots ,a_{n + m
        -2}}(\bot)\,,\\
    \mu_{x}.\Pf_{a_{0},a_{1},\ldots ,a_{n+m-2}}(x) & = \Pf^{n+m
      -1}_{a_{0},a_{1},\ldots ,a_{n+m -2}}(\bot) > \Pf^{n+m
      -2}_{a_{0},a_{1},\ldots ,a_{n+m -2}}(\bot)\,.  \tag*{\qedhere}
  \end{align*}
\end{proof}
Finally, we provide a tight upper bound for closure ordinals of
disjunctive formulas.
\begin{proposition}
    \label{prop:convergencedf}
  If $\phi$ is a disjunctive formula, then
  \begin{align}
    \label{eq:convergencedf}
    \mu_{x}.\phi(x)  & = \phi^{n +1}(\bot)\,,
  \end{align}
  where $n$ is the cardinality of the set $\Head(\phi)$.
\end{proposition}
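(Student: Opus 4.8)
The plan is to read the value of the least fixed point off Proposition~\ref{prop:mudformula} and then bound the rate of convergence by a telescoping lower bound on the approximants. Enumerate $\Head(\phi) = \set{\alpha_{1},\ldots ,\alpha_{n}}$ with the $\alpha_{i}$ pairwise distinct, and put $A \eqdef \bigwedge_{i=1}^{n}\alpha_{i}$ (so $A = \top$ when $n = 0$) and $S \eqdef \bigvee_{\beta \in \Side(\phi)}\beta$ (so $S = \bot$ when $\Side(\phi) = \emptyset$); by \eqref{eq:mudformula}, $\mu_{x}.\phi = A \impl S$. Since $\phi$ read as a function of $x$ is inflating (the Lemma preceding Proposition~\ref{prop:mudformula}), the iterates $\phi^{k}(\bot)$ increase with $k$ and satisfy $\phi^{k}(\bot) \leq \mu_{x}.\phi$ for all $k$. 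Thus it is enough to prove $A \impl S \leq \phi^{n+1}(\bot)$.

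To this end I would use two facts already established inside the proof of Proposition~\ref{prop:mudformula}: for each $\alpha \in \Head(\phi)$ the inequality $\alpha \impl x \leq \phi(x)$ holds, hence $\phi(z) \geq \alpha \impl z$ for every $z$; and for each $\beta \in \Side(\phi)$ we have $\beta \vee x \leq \phi(x)$, hence $\beta \leq \phi(\bot)$ and therefore $S \leq \phi(\bot)$. A short induction on $m$, for $0 \leq m \leq n$, then gives that for any heads $\alpha_{j_{1}},\ldots ,\alpha_{j_{m}}$
\begin{align*}
  \phi^{m+1}(\bot) & \geq \alpha_{j_{1}} \impl (\alpha_{j_{2}} \impl (\cdots (\alpha_{j_{m}} \impl \phi(\bot)) \cdots))\,;
\end{align*}
indeed the inductive step is $\phi^{m+2}(\bot) = \phi(\phi^{m+1}(\bot)) \geq \alpha_{j_{1}} \impl \phi^{m+1}(\bot)$, followed by the induction hypothesis applied to $\phi^{m+1}(\bot)$ together with monotonicity of $\alpha_{j_{1}} \impl (-)$. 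Taking $m = n$ and letting $(\alpha_{j_{1}},\ldots ,\alpha_{j_{n}})$ run through all of $\Head(\phi)$, the iterated implication collapses via $a \impl (b \impl c) = (a \land b) \impl c$ to $A \impl \phi(\bot)$, and since $S \leq \phi(\bot)$ and $A \impl (-)$ is monotone we conclude
\begin{align*}
  \phi^{n+1}(\bot) & \geq A \impl \phi(\bot) \geq A \impl S = \mu_{x}.\phi\,.
\end{align*}
Together with $\phi^{n+1}(\bot) \leq \mu_{x}.\phi$ this is exactly \eqref{eq:convergencedf}, i.e.\ $\cl(\phi) \leq n+1$. The degenerate cases are subsumed: when $n = 0$ the last chain reads $\phi(\bot) \geq S = \top \impl S = \mu_{x}.\phi$, and when $\Side(\phi) = \emptyset$ one has $S = \bot \leq \phi(\bot)$ trivially.

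I do not expect a genuine obstacle; the argument is essentially a bookkeeping of how fast the approximants grow. The one idea to isolate is that $\alpha \impl x \leq \phi(x)$ lets a single iteration of $\phi$ ``absorb'' one head $\alpha$, so that $n$ iterations absorb all $n$ distinct heads and one further iteration installs $S \leq \phi(\bot)$. This is also what pins the bound to $n = \lvert\Head(\phi)\rvert$ rather than to the number of occurrences of head subformulas: re-absorbing a head is wasted, since $(\alpha \land \alpha) \impl c = \alpha \impl c$, and it is precisely this that obstructs any smaller uniform bound on these closure ordinals.
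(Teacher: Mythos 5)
Your proof is correct and follows essentially the same route as the paper's: read off $\mu_{x}.\phi = \bigwedge\Head(\phi) \impl \bigvee\Side(\phi)$ from Proposition~\ref{prop:mudformula}, use $\alpha\impl x\leq\phi(x)$ and $\beta\vee x\leq\phi(x)$, and induct to show that the nested implications by the $n$ heads are absorbed by $n$ further iterations of $\phi$. The only cosmetic difference is that you place $\phi(\bot)$ rather than $\bigvee\Side(\phi)$ at the core of the induction and make the trivial converse inequality $\phi^{n+1}(\bot)\leq\mu_{x}.\phi$ explicit.
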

\begin{proof}
 By Proposition~\ref{prop:mudformula}  we know that $\mu_{x}.\phi(x)= \Nec[\bigwedge_{i = 1,\ldots, n} \alpha_{i}](\bigvee_{\beta \in
      \Side(\phi)} \beta) $.
      We have seen that, for $\alpha \in \Head(\phi)$, $\Nec[\alpha]x \leq
  \phi(x)$ and, similarly, $\beta \vee x \leq \phi(x)$ for $\beta \in \Side(\phi)$.
  Thus we have
  \begin{align*}
    \bigvee_{\beta \in \Side(\phi)} \beta
    & = \bigvee_{\beta \in \Side(\phi)} \beta \vee \bot \leq \phi(\bot)\,.
  \end{align*}
  Let $\Head(\phi) = \set{\alpha_{1},\ldots ,\alpha_{n}}$ and suppose
  that
  \begin{align*}
    \Nec[\alpha_{i}] \ldots \Nec[\alpha_{1}](\bigvee_{\beta \in
      \Side(\phi)} \beta) \leq \phi^{i +1}(\bot)\,.
  \end{align*}
  Then
  \begin{align*}
   \Nec[\alpha_{i+1}] \Nec[\alpha_{i}] \ldots \Nec[\alpha_{1}](\bigvee_{\beta \in
      \Side(\phi)} \beta) \leq \Nec[\alpha_{i+1}](\phi^{i +1}(\bot))
    \leq\phi(\phi^{i +1}(\bot)) = \phi^{i +2}(\bot)\,.
  \end{align*}
  Whence
  \begin{align*}
    \mu_{x}.\phi(x) &
    = \Nec[\bigwedge_{i = 1,\ldots, n} \alpha_{i}](\bigvee_{\beta \in
      \Side(\phi)} \beta) 
    = \Nec[\alpha_{n}]  \ldots \Nec[\alpha_{1}](\bigvee_{\beta \in
      \Side(\phi)} \beta) \leq \phi^{n + 1}(\bot)\,.
    \tag*{\qedhere}
  \end{align*}
\end{proof}

\begin{proposition}
  The above upper bound given in equation~\eqref{eq:convergencedf} is
  tight. 
\end{proposition}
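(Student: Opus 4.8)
The plan is to exhibit, for every $n \geq 1$, a disjunctive formula $\phi_{n}$ with exactly $n$ head subformulas whose closure ordinal is exactly $n+1$. Fix $n+1$ pairwise distinct propositional variables $\alpha_{1},\ldots ,\alpha_{n},\beta$ and set
\[
  \phi_{n}(x) \eqdef \beta \vee \bigvee_{i=1}^{n}\Nec[\alpha_{i}]x\,.
\]
By the grammar~\eqref{grammar:disjunctive} this is disjunctive in $x$, with $\Head(\phi_{n}) = \set{\alpha_{1},\ldots ,\alpha_{n}}$ (of cardinality $n$) and $\Side(\phi_{n}) = \set{\beta}$. Hence Proposition~\ref{prop:convergencedf} already gives $\mu_{x}.\phi_{n} = \Nec[\alpha_{1}\wedge \cdots \wedge \alpha_{n}]\beta = \phi_{n}^{n+1}(\bot)$, so the only thing left to prove is that $\phi_{n}^{n}(\bot) \neq \phi_{n}^{n+1}(\bot)$; for this it suffices to exhibit a single \Ha and valuation at which the two sides differ.

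I would carry out the evaluation in the \Ha of upward closed subsets of the poset $(\mathcal{P}(\set{1,\ldots ,n}),\subseteq)$, under the valuation $\alpha_{i}\mapsto \set{T \mid i\in T}$ and $\beta \mapsto \set{\set{1,\ldots ,n}}$, both of which are upsets; here I read elements of the poset as possible worlds and use the associated forcing relation. In this algebra $\neg\alpha_{i}$ is forced at no world, $\beta$ is forced only at the top $\set{1,\ldots ,n}$, and $\Nec[\alpha_{i}]\psi = \alpha_{i}\impl\psi$ is forced at $T$ exactly when $\psi$ is forced at $T\cup\set{i}$. Using $\phi_{n}^{k+1}(\bot) = \beta \vee \bigvee_{i}\Nec[\alpha_{i}]\phi_{n}^{k}(\bot)$, a routine induction on $k\geq 1$ then yields the key claim: a world $T$ forces $\phi_{n}^{k}(\bot)$ if and only if $|T| > n-k$. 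The base case $k=1$ is immediate, since $\phi_{n}^{1}(\bot) = \beta\vee\bigvee_{i}\neg\alpha_{i}$ is forced only at the top. For the step, from the remarks above $\phi_{n}^{k+1}(\bot)$ is forced at $T$ iff $T$ is the top or $|T\cup\set{i}| > n-k$ for some $i$, and the latter is equivalent to $|T| > n-(k+1)$.

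Granting the claim, taking $k=n$ shows that $\phi_{n}^{n}(\bot)$ is forced exactly at the nonempty subsets, hence fails at $\emptyset$, whereas $\phi_{n}^{n+1}(\bot) = \mu_{x}.\phi_{n} = (\alpha_{1}\wedge \cdots \wedge \alpha_{n})\impl\beta$ is forced at every world (the only world satisfying all the $\alpha_{i}$ is the top, where $\beta$ holds). Therefore $\phi_{n}^{n}(\bot) < \phi_{n}^{n+1}(\bot) = \mu_{x}.\phi_{n}$, so the closure ordinal of $\phi_{n}$ is exactly $n+1 = |\Head(\phi_{n})|+1$, which is precisely the tightness asserted. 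The only step with any real content is the inductive computation of the forcing set of $\phi_{n}^{k}(\bot)$, i.e. the subset-size bookkeeping sketched above; I expect that—elementary as it is—to be the main point, everything else being a direct appeal to Propositions~\ref{prop:mudformula} and~\ref{prop:convergencedf}.
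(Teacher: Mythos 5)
Your proof is correct and is essentially the paper's own argument: the paper uses the same formula $b \vee \bigvee_{i} (a_{i}\impl x)$ evaluated in the Kripke model over $\mathcal{P}(\set{1,\ldots,n})$ (with worlds labelled by the complements of yours, so its $s\forces a_i$ iff $i\notin s$ matches your $T\forces\alpha_i$ iff $i\in T$), and establishes the same cardinality invariant $\phi^{k+1}(\bot)=\set{s : \card{s}\leq k}$ by the same induction. No gaps.
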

\begin{proof}
  For each $n \geq 0$, consider the formula
  \begin{align*}
    \phi_{n}(x) & \eqdef b \vee \bigvee_{i=1,\ldots ,n} a_{i} \impl
    x\,,
  \end{align*}
  and the model $K_{n} = \langle P(\set{1,\ldots
    ,n}),\subseteq,b,\set{a_{i} \mid i = 1,\ldots ,n} \rangle$ with $b
  = \set{\emptyset}$, 
  $a_1, \ldots, a_n$ atomic formulas and  $s \forces a_{i}$ iff $i \not \in s$, for $s\in P(\set{1,\ldots
    ,n})$.  Let us
  compute the value of $\phi_{n}(x)$.
  \begin{align*}
    s \forces a_{i} \impl x & \tiff \forall s' \subseteq s, \;i \not
    \in s' \Rightarrow s' \forces x 
    \tiff s \setminus \set{i} \forces x\,,
    \intertext{whence}
    s \forces \phi_{n}(x) & \tiff  \text{either } s = \emptyset \text{
      or }  s \setminus \set{i} \forces x, \text{ for some } i \in \set{1,\ldots ,n}\,.
  \end{align*}
  Thus it is immediate to see that
  \begin{align*}
    \phi^{k+1}(\emptyset)
    & = \set{s \subseteq \set{1,\ldots ,n} \mid \card s \leq k}
  \end{align*}
  so that $\phi_{n}$ converges in no less than $n+1$ steps.
\end{proof}

\section{Ruitenburg's numbers for \fullypositive formulas}
\label{sec:ruitenburg}

Let $\phi$ be a formula of the \IPC (possibly) containing the variable
$x$. By $\phi^{n}$ we denote the iterated substitution of $x$ in
$\phi$ for $\phi$, defined by induction by $\phi^{0} \eqdef x$ and
$\phi^{n+1} \eqdef \phi[\phi^{n}/x]$. 
We let $\rho(\phi)$ be the least
non-negative integer $n$ such that the relation
$\phi^{n+2} = \phi^{n}$ holds; $\rho(\phi)$ is defined for any formula
$\phi$ of the \IPC, by \cite{Ruitenburg84}, and moreover
$\cl(\phi) \leq \rho(\phi)$.  A fine analysis of Ruitenburg's work
shows that $\rho(\phi) \leq 2n + 2$, where $n$ counts the implication
subformulas and the propositional variables in $\phi$.

The tools developed until now allow to construct an upper bound for
$\cl(\phi)$ for any formula $\phi$ of the \IPC, yet the bound so
obtained is exponential in the size of $\phi$; thus, in view of the
relation $\cl(\phi) \leq \rho(\phi) \leq 2n +2$, it is not
optimal. We exemplify this point.
Let $\phi$ be a \fullypositive formula and let $n$ be its size (the
number of all symbols and propositional variables in $\phi$).  When
transforming $\phi$ into a conjunction of disjunctive formulas, so
\begin{align}
  \label{eq:difficultConjunction}
  \phi & \eqIpc \bigwedge_{i = 1,\ldots ,k} \phi_{i}\,,
\end{align}
the number $k$ of conjuncts might be exponentially biggger than $n$.
Say that $\cl(\phi_{i}) \leq N$ for each $i = 1,\ldots ,k$.
An iterated applications of
Proposition~\ref{prop:conjunctionconvergence} yields the following upper bound:
\begin{align*}
  \cl(\phi) &
              = \cl(\bigwedge_{i} \phi_{i})
              \leq
              1 + \sum_{i = 1,\ldots ,k} (\cl(\phi_{i}) -1)
              \leq 1 + k(N -1)\,,
\end{align*}
which depends on some possibly very large $k$.

\bigskip

From now on, our goal shall be to give an upper bound for $\cl(\phi)$
when $\phi$ is a formula such as the one in (either side of) equation
\eqref{eq:difficultConjunction}. Since our proofs actually yield upper
bounds for Ruitenburg's numbers $\rho(\phi)$ (and a proof of
Ruitenburg's theorem for these formulas) we present our results
directly as bounds for the numbers $\rho(\phi)$.

While the procedure that transforms a \stronglypositive formula $\phi$
(say as the one on the left of \eqref{eq:difficultConjunction}) into a
conjunction of disjunctive formulas $\phi_{i}$ (as the one on the
right of \eqref{eq:difficultConjunction}) might exponentially increase
the size of the formula, as argued above, it does not increase the
number of head subformulas nor the number of side subformulas.
Therefore we give bounds as functions of these two parameters, which
eventually ensures an upper bound to Ruitenburg's numbers of
\stronglypositive formulas which is quadratic in the size of the
formulas.  
In view of obtaining these upper bounds we can (and shall) suppose
that all the head or side subformulas are propositional variables.

\medskip

In the following we let $\A \eqdef \set{\alpha_{1},\ldots ,\alpha_{N}}$
and $\B \eqdef \set{\beta_{1},\ldots ,\beta_{M}}$ be two (finite) disjoint
sets of propositional variables; we also suppose that the special
propositional variable $x$ does not belong to either of $\A$ and $\B$.
We consider formulas of the \IPC generated by the following grammar:
\begin{align}
  \label{grammar:disjunctiveAB}
  \phi \;& \production \;x  \;\mid\;\NEC{A} \phi \;\mid \;(\bigvee B) \vee \phi \;\mid\;
  \phi \vee \phi\,,
\end{align}
where $A \subseteq \A$, $B \subseteq \B$ and, as before,
$\NEC{A}\phi = \bigwedge A \impl \phi$.  That is, formulas generated by
the above grammar are disjunctive formulas, as defined by the grammar
\eqref{grammar:disjunctive}, whose head formulas are conjunctions of
propositional variables from $\A$, and whose side formulas are
disjunctions of propositional variables from $\B$.  We let $\DisjAB$
be the set of formulas generated by \eqref{grammar:disjunctiveAB}.
We consider formulas in $\DisjAB$ as elements of
$\FH[\alpha_{1},\ldots ,\alpha_{N},\beta_{1},\ldots ,\beta_{M},x]$,
the free Heyting algebra on the generators
$\alpha_{1},\ldots ,\alpha_{N},\beta_{1},\ldots
,\beta_{M},x$. Substitution of a formula $\psi$ for the variable $x$
in a formula $\phi$, usually noted by $\phi[\psi/x]$, yields a monoid
structure on
$\FH[\alpha_{1},\ldots ,\alpha_{N},\beta_{1},\ldots ,\beta_{M},x]$. We
write $\phi \circ \psi$ for $\phi[\psi/x]$ or sometimes,
$\phi(\psi)$. Since formulas in $\DisjAB$ are closed under
substitution, $\DisjAB$ is a submonoid of
$\FH[\alpha_{1},\ldots ,\alpha_{N},\beta_{1},\ldots
,\beta_{M},x]$. $\DisjAB$ is actually an ordered submonoid, meaning
that the following clause is vaild:
\begin{align}
  \label{eq:ordmonoid}
  \phi \leq \phi' \tand & \psi \leq \psi'  \timplies \phi \circ \psi \leq \phi' \circ \psi'\,.
\end{align}
This is mainly because the variable $x$ never occurs under the left
side of 
any implication in a formula in $\DisjAB$. Moreover, formulas
are inflating, meaning that
\begin{align}
  \label{eq:inflating}
  x &\leq \phi\,, \quad \text{for each $\phi \in \DisjAB$\,.}
\end{align}

\subsection{The support of a formula}
We define next two functions, $\suppA$ and $\suppB$, with domain
$\DisjAB$ and codomain $\PA$ and $\PB$, respectively: \\
\pptiny
\begin{minipage}[t]{0.49\linewidth}
\begin{align*}
  \suppA(x) & \eqdef \emptyset\,, \\
  \suppA(\,\NEC{A} \phi\,) & \eqdef A \cup \suppA(\phi)\,, \\
  \suppA(\,(\bigvee B) \vee \phi\,) & \eqdef \suppA(\phi) \,,\\
  \suppA(\phi_{0} \vee \phi_{1}) & \eqdef
  \suppA(\phi_{0}) \cup
  \suppA(\phi_{1})\,,
\end{align*}
\end{minipage}
\begin{minipage}[t]{0.49\linewidth}
\begin{align*}
  \suppB(x) & \eqdef \emptyset\,, \\
  \suppB(\,\NEC{A} \phi\,) & \eqdef \suppB(\phi)\,, \\
  \suppB(\,(\bigvee B) \vee \phi\,) & \eqdef B \cup \suppB(\phi) \,, \\
  \suppB(\phi_{0} \vee \phi_{1}) & \eqdef
  \suppB(\phi_{0}) \cup \suppB(\phi_{1})\,.
\end{align*}
\end{minipage}
\ppnormal
\smallskip
\\
We also let
\begin{align*}
  \supp(\phi) & \eqdef (\suppA(\phi),\suppB(\phi))\,,
\end{align*}
so $\supp(\phi) \in \PA \times \PB$.

\subsection{Word formulas}
In the inverse direction, given
$(A,B) \in \PA \times \PB$, we define
\begin{align*}
  \phi_{(A,B)} & \eqdef\NEC{A} (\bigvee B \vee x)\,.
\end{align*}
Let us develop the basic properties of the formulas $\phi_{(A,B)}$.
\begin{proposition}
  \label{prop:ABrelations}
  For each $(A_{0},B_{0}),(A_{1},B_{1}) \in \PABast$ and each $\phi \in \DisjAB$,
  \begin{align*}
    \phi_{(A_{0},\emptyset)} \circ \phi_{(A_{1},B_{1})}
    & =     \phi_{(A_{0} \cup A_{1},B_{1})}\,, \\
    \phi_{(A_{0},B_{0})} \circ \phi_{(\emptyset,B_{1})}
    & =     \phi_{(A_{0} ,B_{0}\cup B_{1})}\,, \\
    \phi_{(A_{0},B_{0})} \circ \phi \circ \phi_{(A_{1},B_{1})}
    & =  \phi_{(A_{0},B_{0}\setminus B_{1})} \circ \phi \circ \phi_{(A_{1}\setminus A_{0},B_{1})}\,.
  \end{align*}
\end{proposition}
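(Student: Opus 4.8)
The plan is to verify all three equalities directly inside the free Heyting algebra $\FH[\alpha_1,\dots,\alpha_N,\beta_1,\dots,\beta_M,x]$, using only equational reasoning. Throughout I would use the routine Heyting identities $\bot\vee y=y$, $\top\impl y=y$, the currying law $a\impl(b\impl y)=(a\land b)\impl y$, the absorptions $\bigwedge A\land\bigwedge A'=\bigwedge(A\cup A')$ and $\bigvee B\vee\bigvee B'=\bigvee(B\cup B')$, the unit $y\le a\impl y$, the fact $a\impl y=a\impl(a\land y)$, and the cancellation rule ``$a\impl u=a\impl v$ iff $a\land u=a\land v$'' (one direction by applying $a\land(-)$ and the axiom $x\land(x\impl y)=x\land y$, the other using \eqref{eq:distrimpl}). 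The two nontrivial ingredients beyond these will be that every formula of $\DisjAB$ is inflating, i.e.\ \eqref{eq:inflating}, and that $\phi$, viewed as a polynomial in $x$ (over the free Heyting algebra on the remaining generators), is compatible, i.e.\ Proposition~\ref{prop:Peirce}.

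The first two identities are immediate computations. Since $\phi_{(A_0,\emptyset)}(x)=\bigwedge A_0\impl x$, substitution followed by currying gives $\phi_{(A_0,\emptyset)}\circ\phi_{(A_1,B_1)}=\bigwedge A_0\impl(\bigwedge A_1\impl(\bigvee B_1\vee x))=\bigwedge(A_0\cup A_1)\impl(\bigvee B_1\vee x)=\phi_{(A_0\cup A_1,B_1)}$. Likewise $\phi_{(\emptyset,B_1)}(x)=\bigvee B_1\vee x$, so $\phi_{(A_0,B_0)}\circ\phi_{(\emptyset,B_1)}=\bigwedge A_0\impl(\bigvee B_0\vee\bigvee B_1\vee x)=\bigwedge A_0\impl(\bigvee(B_0\cup B_1)\vee x)=\phi_{(A_0,B_0\cup B_1)}$.

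The third identity is the one requiring work. Writing $a_i=\bigwedge A_i$, $b_i=\bigvee B_i$, $y=a_1\impl(b_1\vee x)$ and $y'=\bigwedge(A_1\setminus A_0)\impl(b_1\vee x)$, the left side equals $a_0\impl(b_0\vee\phi(y))$ and the right side equals $a_0\impl(\bigvee(B_0\setminus B_1)\vee\phi(y'))$. I would establish this via two local reductions. \emph{Side reduction:} since $\phi$ is inflating by \eqref{eq:inflating}, $\bigvee(B_0\cap B_1)\le b_1\le b_1\vee x\le y\le\phi(y)$, hence $b_0\vee\phi(y)=\bigvee(B_0\setminus B_1)\vee\bigvee(B_0\cap B_1)\vee\phi(y)=\bigvee(B_0\setminus B_1)\vee\phi(y)$. \emph{Head reduction:} because $a_0\le\bigwedge(A_1\cap A_0)$, the currying identity $a_1\impl z=\bigwedge(A_1\cap A_0)\impl(\bigwedge(A_1\setminus A_0)\impl z)$ together with $a_0\land(p\impl w)=a_0\land w$ (valid whenever $a_0\le p$) yields $a_0\land y=a_0\land y'$; applying compatibility of $\phi$ (Proposition~\ref{prop:Peirce}) twice then gives $a_0\land\phi(y)=a_0\land\phi(a_0\land y)=a_0\land\phi(a_0\land y')=a_0\land\phi(y')$. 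Combining these with distributivity and the cancellation rule, $a_0\impl(b_0\vee\phi(y))=a_0\impl(\bigvee(B_0\setminus B_1)\vee\phi(y))=a_0\impl(\bigvee(B_0\setminus B_1)\vee\phi(y'))$, which is the right-hand side.

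The main obstacle is precisely the head reduction: one must transport the outer hypothesis $\bigwedge A_0$ past $\phi$ — a formula that may bury $x$ beneath arbitrarily many implications — and down into the antecedent of the inner implication. This is exactly where compatibility (equivalently, strongness) of monotone polynomials on Heyting algebras is indispensable; once that step is available, everything else is bookkeeping with the distributive and residuation laws. A minor point to address cleanly is that Proposition~\ref{prop:Peirce} must be invoked with $\phi$ regarded as a polynomial on $\FH[\alpha_1,\dots,\beta_M,x]$ obtained by substitution (renaming the substituted variable), so that the coefficient $a_0=\bigwedge A_0$ lies in the ground algebra; this is routine given the description of polynomial algebras in Section~\ref{sec:quantifiers}.
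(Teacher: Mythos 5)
Your proof is correct and takes essentially the same route as the paper's: the first two identities by currying and absorption, and the third by absorbing the common side variables through inflation and discharging the common head variables through strongness of the polynomial $\phi$. The only difference is one of packaging: the paper splits $\NEC{A_{0}}$ and $\NEC{A_{1}}$ over $A_{0}\cap A_{1}$ and invokes Lemma~\ref{lemma:idempotency}, whereas you derive the equivalent fact directly from compatibility (Proposition~\ref{prop:Peirce}) together with the cancellation $a\impl u=a\impl v$ iff $a\land u=a\land v$.
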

\begin{proof}
  The first two properties are immediate from the definition of
  $\phi_{(A,B)}$. For the third, notice that
  \begin{align*}
    \phi_{(A_{0},B_{0})} \circ
    & \phi \circ \phi_{(A_{1},B_{1})}
    \\&
    =
    \NEC{A_{0} \cap A_{1}}
      \NEC{A_{0} \setminus A_{1}}
      (\bigvee B_{0} \vee \phi(\,
      \NEC{A_{0} \cap A_{1}}\NEC{A_{1} \setminus A_{0}}
      (\bigvee B_{1} \vee x)
      /x\,)
      )
    \\& =
      \NEC{A_{0} \cap A_{1}} \NEC{A_{0} \setminus A_{1}}
      (\bigvee B_{0} \vee \phi(\,
      \NEC{A_{1} \setminus A_{0}}
      (\bigvee B_{1} \vee x)
      /x\,)
      )
      \,,
    \tag*{by Lemma~\ref{lemma:idempotency},}
    \\&
    = \NEC{A_{0}} 
    (\bigvee B_{0} \vee \phi(\,
    \NEC{A_{1} \setminus A_{0}}
    (\bigvee B_{1} \vee x)
    /x\,)
    )
    \\&
    =
    \NEC{A_{0}}  
    (\bigvee B_{0}\setminus B_{1} \vee \phi(\,
    \NEC{A_{1} \setminus A_{0}}
    (\bigvee B_{1} \vee x)
    \,)
    )
    \,,
    \tag*{since  $\phi(\beta_{0} \vee x) = \beta_{0} \vee \phi(\beta_{0} \vee x)$,}
    \\&
    = \phi_{(A_{0},B_{0}\setminus B_{1})} \circ \phi \circ \phi_{(A_{1}\setminus A_{0},B_{1})}\,\,. \tag*{\qedhere}
  \end{align*}
\end{proof}
An immediate consequence of  the proposition is the following:
\begin{lemma}
  For each $(A,B) \in \PAB$, $\phi_{(A,B)}^{2} = \phi_{(A,B)}$, so
  $\rho(\phi_{(A,B)}) = 1$.
\end{lemma}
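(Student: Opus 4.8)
The plan is to obtain this as an immediate corollary of Proposition~\ref{prop:ABrelations}, treating $\DisjAB$ as the monoid under substitution $\circ$ whose neutral element is the formula $x$.

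First I would apply the third identity of Proposition~\ref{prop:ABrelations} with $\phi$ taken to be the neutral element $x$ and with $(A_{0},B_{0}) = (A_{1},B_{1}) = (A,B)$. Since $\psi \circ x = \psi = x \circ \psi$ for every $\psi \in \DisjAB$, this identity specialises to
\[
\phi_{(A,B)}^{2} \;=\; \phi_{(A,B)} \circ \phi_{(A,B)} \;=\; \phi_{(A,B\setminus B)} \circ \phi_{(A\setminus A,B)} \;=\; \phi_{(A,\emptyset)} \circ \phi_{(\emptyset,B)}\,.
\]
Then I would invoke the first identity of Proposition~\ref{prop:ABrelations}, with $A_{0} = A$, $A_{1} = \emptyset$ and $B_{1} = B$, to rewrite the right-hand side as $\phi_{(A\cup\emptyset,B)} = \phi_{(A,B)}$. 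Chaining the two computations yields $\phi_{(A,B)}^{2} = \phi_{(A,B)}$.

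From the idempotency $\phi_{(A,B)}^{2} = \phi_{(A,B)}$ a trivial induction gives $\phi_{(A,B)}^{n} = \phi_{(A,B)}$ for all $n \geq 1$; in particular $\phi_{(A,B)}^{1+2} = \phi_{(A,B)} = \phi_{(A,B)}^{1}$, so $\rho(\phi_{(A,B)}) \leq 1$. For the reverse inequality one notes that $\rho(\phi_{(A,B)}) = 0$ would mean $\phi_{(A,B)}^{2} = \phi_{(A,B)}^{0} = x$, hence $\phi_{(A,B)} = x$; but $\phi_{(A,B)} = \NEC{A}(\bigvee B \vee x)$ is not provably equivalent to $x$ in the free Heyting algebra on $\A \cup \B \cup \set{x}$ whenever $A \cup B \neq \emptyset$ (witnessed on a small finite Heyting algebra, or a two-point Kripke model), so $\rho(\phi_{(A,B)}) = 1$ in that case.

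I do not anticipate any real difficulty: the whole content is packaged inside Proposition~\ref{prop:ABrelations}, and the computation above is just a substitution of special arguments. The only point deserving attention is the fully degenerate case $A = B = \emptyset$, where $\phi_{(\emptyset,\emptyset)} = \top \impl (\bot \vee x) \eqIpc x$ and the Ruitenburg number is $0$ rather than $1$; this should either be excluded from the statement or flagged as an exception.
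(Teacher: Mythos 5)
Your derivation is correct and matches the paper's intent exactly: the paper presents this lemma as an immediate consequence of Proposition~\ref{prop:ABrelations} without spelling out the computation, and your instantiation of the third identity (with $\phi = x$) followed by the first is precisely that computation. Your observation about the degenerate case $(A,B)=(\emptyset,\emptyset)$, where $\phi_{(\emptyset,\emptyset)}\eqIpc x$ and hence $\rho(\phi_{(\emptyset,\emptyset)})=0$, does point to a harmless imprecision in the stated equality $\rho(\phi_{(A,B)})=1$; only the bound $\rho(\phi_{(A,B)})\leq 1$ matters for what follows.
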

We extend the definition of the correspondence sending
$(A,B) \in \PA \times \PB$ to $\phi_{(A,B)} \in \DisjAB$ to the set of
all words over the alphabet $\PA \times \PB$---that shall be noted
by $(\PA \times \PB)^{\ast}$, as usual.  Syntactically, this amounts to defining
$\phi_{w}$ for each $w \in (\A\times \B)^{\ast}$, as follows:
\begin{align*}
  \phi_{\epsilon} & \eqdef x\,,
  & \phi_{(A,B)w} \eqdef \phi_{(A,B)} \circ \phi_{w}
    \,.
\end{align*}
We call a formula of the form $\phi_{w}$ for some $w \in \PABast$ a
\emph{word formula}.
\begin{lemma}
  For each $w \in (\PA \times \PB)^{\ast}$, $\phi_{w} \in \DisjAB$.
  Moreover, 
  if $w = (A_{1},B_{1}) \ldots (A_{k},B_{k})$, then 
  \begin{align}
    \label{eq:suppW}
    \supp(\phi_{w})  & = (\bigcup_{i = 1\ldots ,k} A_{i},\bigcup_{i =
                       1\ldots ,k}B_{i})\,, \\
    \label{eq:rhowiswto}
    \phi_{w} \leq \phi_{w}^{2} & = \phi_{\supp(\phi_{w})}\,,
    \\
    \rho(\phi_{w}) &\leq 2\,.
  \end{align}
\end{lemma}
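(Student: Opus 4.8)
The plan is to prove the four assertions in turn, the first two being bookkeeping and the identity $\phi_w^2 = \phi_{\supp(\phi_w)}$ carrying all the weight. That $\phi_w \in \DisjAB$ and that \eqref{eq:suppW} holds I would get by a single induction on the length $k$ of $w$: for $w = \epsilon$ we have $\phi_\epsilon = x \in \DisjAB$ and $\supp(x) = (\emptyset,\emptyset)$, while for $w = (A_1,B_1)w'$ we have $\phi_{(A_1,B_1)w'} = \NEC{A_1}((\bigvee B_1)\vee\phi_{w'})$, which lies in $\DisjAB$ by the grammar~\eqref{grammar:disjunctiveAB} and whose $\suppA,\suppB$ are obtained from those of $\phi_{w'}$ by the defining clauses. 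From now on abbreviate $(A,B) := \supp(\phi_w)$, so that by~\eqref{eq:suppW} $A = \bigcup_i A_i$, $B = \bigcup_i B_i$, and, unfolding the definition, $\phi_w = \NEC{A_1}((\bigvee B_1)\vee\NEC{A_2}(\cdots\NEC{A_k}((\bigvee B_k)\vee x)))$.

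For $\phi_w^2 \leq \phi_{(A,B)}$ I would first record that $\phi_{(A_i,B_i)} \leq \phi_{(A,B)}$ for each $i$: since $A_i \subseteq A$ and $B_i \subseteq B$, antitonicity of $\impl$ in the antecedent and monotonicity in the consequent give $\NEC{A_i}((\bigvee B_i)\vee x) \leq \NEC{A}((\bigvee B)\vee x)$. As $\DisjAB$ is an ordered monoid under substitution~\eqref{eq:ordmonoid}, composing these $k$ inequalities yields $\phi_w \leq \phi_{(A,B)}^{k}$, and single-letter idempotence $\phi_{(A,B)}^2 = \phi_{(A,B)}$ (the Lemma just above) collapses the right-hand side to $\phi_{(A,B)}$ when $k\geq 1$, while for $k=0$ one uses $\phi_\epsilon = x = \phi_{(\emptyset,\emptyset)}$ directly. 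Applying~\eqref{eq:ordmonoid} once more, $\phi_w^2 = \phi_w \circ \phi_w \leq \phi_{(A,B)}\circ\phi_{(A,B)} = \phi_{(A,B)}$.

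For the reverse inequality I would read two lower bounds off the shape of $\phi_w$, regarded as an element of the free Heyting algebra $\FH[\A\cup\B\cup\set{x}]$: discarding every disjunct $\bigvee B_i$ and currying the nested implications (using $\bigwedge A_i \impl (\bigwedge A_j \impl \psi) = \bigwedge(A_i\cup A_j)\impl\psi$, exactly as in the proof of Proposition~\ref{prop:mudformula}) gives $\phi_w \geq \NEC{A}(x)$; and drilling into the same nested structure, together with $x\leq\phi_w$ from~\eqref{eq:inflating}, gives $\phi_w \geq (\bigvee B)\vee x$. Substitution of $\phi_w$ for $x$ is monotone (it is a Heyting homomorphism out of the free algebra), so applying it to $\phi_w \geq \NEC{A}(x)$ gives $\phi_w^2 = \phi_w[\phi_w/x] \geq \NEC{A}(\phi_w) \geq \NEC{A}((\bigvee B)\vee x) = \phi_{(A,B)}$. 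Combined with the previous paragraph this proves $\phi_w^2 = \phi_{(A,B)} = \phi_{\supp(\phi_w)}$. The remaining claims are then immediate: $\phi_w \leq \phi_w^2$ is~\eqref{eq:inflating} fed through~\eqref{eq:ordmonoid}, and $\rho(\phi_w)\leq 2$ follows because $\phi_w^{4} = (\phi_w^{2})^{2} = \phi_{(A,B)}^{2} = \phi_{(A,B)} = \phi_w^{2}$.

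The only genuinely non-formal step is the lower bound $\phi_{(A,B)} \leq \phi_w^2$, and the point requiring care there is that the ``drop disjuncts / curry implications / substitute'' manipulations produce valid \emph{inequalities in the free Heyting algebra} $\FH[\A\cup\B\cup\set{x}]$ — so they must be justified as such (equivalently, checked by soundness over every Heyting algebra), not as informal rewriting — and likewise that monotonicity of $\psi \mapsto \psi[\phi_w/x]$ is being invoked. Everything else reduces to~\eqref{eq:ordmonoid}, \eqref{eq:inflating}, and the single-letter idempotence $\phi_{(A,B)}^2 = \phi_{(A,B)}$.
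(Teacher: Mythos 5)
Your proof is correct, but for the central identity $\phi_{w}^{2} = \phi_{\supp(\phi_{w})}$ it takes a genuinely different route from the paper. The paper obtains this equality by an equational rewriting argument: it iteratively applies the three relations of Proposition~\ref{prop:ABrelations} (in particular the third one, which lets one delete from a later letter the $\A$-components already present in an earlier letter, and dually for $\B$-components) to normalize the word $ww$ down to the single letter $\supp(\phi_{w})$. You instead sandwich $\phi_{w}^{2}$ between two inequalities: the upper bound $\phi_{w}^{2}\leq\phi_{\supp(\phi_{w})}$ follows from letterwise monotonicity $\phi_{(A_{i},B_{i})}\leq\phi_{(A,B)}$, the ordered-monoid property~\eqref{eq:ordmonoid} and single-letter idempotence, while the lower bound $\phi_{(A,B)}\leq\phi_{w}^{2}$ comes from reading off $\NEC{A}x\leq\phi_{w}$ and $(\bigvee B)\vee x\leq\phi_{w}$ (the ``union'' strengthenings, via currying, of the head/side estimates already used in the proof of Proposition~\ref{prop:mudformula}) and composing them by monotonicity of substitution. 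Both arguments are sound; yours only needs the idempotence consequence of Proposition~\ref{prop:ABrelations} rather than its third relation, and it makes the order-theoretic content explicit, whereas the paper's normalization argument is what later powers the definition of $\Less$ and Proposition~\ref{prop:Less}, so the third relation is not wasted there. Your treatment of the remaining claims (membership in $\DisjAB$ and~\eqref{eq:suppW} by induction, $\phi_{w}\leq\phi_{w}^{2}$ from~\eqref{eq:ordmonoid} and~\eqref{eq:inflating}, and $\rho(\phi_{w})\leq 2$ from $\phi_{w}^{4}=(\phi_{w}^{2})^{2}=\phi_{\supp(\phi_{w})}=\phi_{w}^{2}$) matches the paper's.
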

\begin{proof}
  The first statement is a consequence of formulas of $\DisjAB$ being
  closed under substitution. Equation~\eqref{eq:suppW} is easily
  proved by induction. The relation $\phi_{w} \leq \phi_{w}^{2}$ is an
  easy consequence of conditions~\eqref{eq:ordmonoid} and
  \eqref{eq:inflating}.  $\phi_{w}^{2} = \phi_{\supp(\phi_{w})}$ is
  obtained by iteratively applying the relations in
  Proposition~\ref{prop:ABrelations}.  Finally we argue that
  $\phi_{w}^{3} = \phi_{w}^{2}$ (so $\rho(\phi_{w}) = 2$) as follows:
  \begin{align*}
    \phi_{w}^{2} & \leq  \phi_{w}^{3} \leq  \phi_{w}^{4}
                   = \phi_{\supp(\phi_{w})}^{2} = \phi_{\supp(\phi_{w})}  =  \phi_{w}^{2}\,.
                   \tag*{\qedhere}
  \end{align*}
\end{proof}
In view of \eqref{eq:suppW}, let us define
\begin{align*}
  \supp(\,(A_{1},B_{1}) \ldots (A_{k},B_{k})\,)
  & \eqdef 
    (\bigcup_{i = 1\ldots ,k}A_{i},\bigcup_{i =
    1\ldots ,k}B_{i})\,,
\end{align*}
so $\supp(w) = \supp(\phi_{w})$.

\begin{lemma}
  For each $\phi \in \DisjAB$, $\phi \leq \phi_{\supp(\phi)}$.
\end{lemma}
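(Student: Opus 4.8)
The plan is to argue by structural induction on $\phi$ following the grammar~\eqref{grammar:disjunctiveAB} that generates $\DisjAB$. In the base case $\phi = x$ we have $\supp(x) = (\emptyset,\emptyset)$, so $\phi_{\supp(x)} = \NEC{\emptyset}(\bigvee\emptyset \vee x) = \top \impl (\bot \vee x) = x$, and the claim reduces to reflexivity $x \leq x$. There remain three inductive cases, according to whether $\phi$ has the form $\NEC{A}\psi$, $(\bigvee B)\vee\psi$, or $\psi_{0}\vee\psi_{1}$, with $A \subseteq \A$, $B \subseteq \B$, and $\psi,\psi_{0},\psi_{1} \in \DisjAB$.

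Throughout I would use four elementary facts valid in any \Ha: (i) the currying identity $\NEC{A}\NEC{A'}\chi = \NEC{A \cup A'}\chi$ (this is $a \impl (b \impl c) = (a \land b) \impl c$ applied to $a = \bigwedge A$, $b = \bigwedge A'$, and was already used in the proof of Proposition~\ref{prop:mudformula}); (ii) $\NEC{A}(-)$ is monotone in its argument, and $\NEC{A}\chi \leq \NEC{A'}\chi$ whenever $A \subseteq A'$, since then $\bigwedge A' \leq \bigwedge A$ and implication is antitone in its premise; (iii) the inflation law $\chi \leq \NEC{A}\chi$, a direct consequence of the \Ha axiom $x \land (y \impl x) = x$; and (iv) $\bigvee(B \cup B') = (\bigvee B) \vee (\bigvee B')$.

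For $\phi = \NEC{A}\psi$ the induction hypothesis gives $\psi \leq \NEC{\suppA(\psi)}(\bigvee\suppB(\psi) \vee x)$, so applying $\NEC{A}(-)$ and using (i) we get $\phi \leq \NEC{A \cup \suppA(\psi)}(\bigvee\suppB(\psi) \vee x) = \phi_{\supp(\phi)}$, since $\supp(\phi) = (A \cup \suppA(\psi),\,\suppB(\psi))$. For $\phi = (\bigvee B)\vee\psi$, so that $\supp(\phi) = (\suppA(\psi),\,B \cup \suppB(\psi))$, I would bound the two disjuncts separately against $\phi_{\supp(\phi)}$: on the one hand $\psi \leq \NEC{\suppA(\psi)}(\bigvee\suppB(\psi)\vee x) \leq \phi_{\supp(\phi)}$ by the induction hypothesis, monotonicity in the argument of $\NEC{\suppA(\psi)}(-)$, and (iv); on the other hand $\bigvee B \leq \NEC{\suppA(\psi)}(\bigvee B) \leq \phi_{\supp(\phi)}$ by (iii), monotonicity, and (iv); taking the join yields $\phi \leq \phi_{\supp(\phi)}$. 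The case $\phi = \psi_{0}\vee\psi_{1}$ is analogous: with $\supp(\phi) = (\suppA(\psi_{0}) \cup \suppA(\psi_{1}),\,\suppB(\psi_{0}) \cup \suppB(\psi_{1}))$, the induction hypothesis together with (ii) and (iv) places each $\psi_{i}$ below $\NEC{\suppA(\psi_{0}) \cup \suppA(\psi_{1})}\bigl(\bigvee(\suppB(\psi_{0}) \cup \suppB(\psi_{1})) \vee x\bigr) = \phi_{\supp(\phi)}$, and the join of the two bounds is this same formula.

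There is no genuine obstacle: the argument is routine bookkeeping. The only points requiring a little care are the reading of the empty meet and empty join in the base case, so that $\phi_{(\emptyset,\emptyset)} = x$, and, in the last two cases, keeping in mind that the support of a subformula is contained in the support of the whole formula, so that the monotonicity law (ii) is applied in the correct direction. One could instead try to route the proof through the word formulas $\phi_{w}$ and the relations of Proposition~\ref{prop:ABrelations}, but the direct induction is shorter and entirely self-contained.
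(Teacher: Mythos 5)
Your proof is correct, but it takes a genuinely different route from the paper's. The paper does not prove the inequality directly; instead it inductively builds a \emph{word} $w(\phi) \in \PABast$ with $w(x) = (\emptyset,\emptyset)$, $w(\NEC{A}\psi) = (A,\emptyset)w(\psi)$, $w(\bigvee B \vee \psi) = (\emptyset,B)w(\psi)$ and $w(\psi_{0}\vee\psi_{1}) = w(\psi_{0})w(\psi_{1})$, shows $\phi \leq \phi_{w(\phi)}$ and $\supp(w(\phi)) = \supp(\phi)$, and then concludes via the already-established relation $\phi_{w} \leq \phi_{w}^{2} = \phi_{\supp(\phi_{w})}$ from \eqref{eq:rhowiswto}. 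The only non-trivial step there is the disjunction case, handled by $\psi_{0}\vee\psi_{1} \leq \psi_{0}\circ\psi_{1} \leq \phi_{w(\psi_{0})}\circ\phi_{w(\psi_{1})}$, which leans on the inflating property \eqref{eq:inflating} and the monoid structure, and ultimately on the absorption identities of Proposition~\ref{prop:ABrelations} hidden inside \eqref{eq:rhowiswto}. Your argument replaces all of this machinery with a direct structural induction on the grammar~\eqref{grammar:disjunctiveAB}, using only currying $\NEC{A}\NEC{A'}\chi = \NEC{A\cup A'}\chi$, antitonicity of the premise of an implication, the inflation law $\chi \leq \NEC{A}\chi$, and weakening of joins; each case checks out (in particular, in the two disjunction cases you correctly enlarge both the head set and the side set before taking the join of the bounds). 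What your version buys is self-containedness and brevity: it needs nothing beyond elementary \Ha{} identities. What the paper's version buys is the intermediate object $\phi_{w(\phi)}$, a single word formula squeezed between $\phi$ and $\phi_{\supp(\phi)}$, which fits the section's general strategy of reducing everything to word formulas (compare the later $\Branches$ construction); your proof does not produce such a witness, but the lemma as stated does not require one.
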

\begin{proof}
  We inductively define, for each $\phi \in \DisjAB$,  a word $w(\phi)$
  such that $\phi \leq \phi_{w(\phi)}$ and
  $\supp(\phi) = \supp(w(\phi))$.  Then, using
  equation~\eqref{eq:rhowiswto},
  we deduce
  \begin{align*}
  \phi & \leq   \phi_{w(\phi)} \leq \phi_{\supp(w(\phi))} = \phi_{\supp(\phi)}\,.
  \end{align*}
  
  We let $w(x) \eqdef (\emptyset,\emptyset)$,
  $w(\NEC{A}\phi) \eqdef (A,\emptyset)w(\phi)$,
  $w(\bigvee B \vee \phi) \eqdef (\emptyset,B)w(\phi)$ , and
    \begin{align*}
    w(\phi_{0} \vee \phi_{1}) & \eqdef  w(\phi_{0}) w(\phi_{1}) \,.
    \end{align*}
    By induction, it is proved that $\phi \leq \phi_{w(\phi)}$ and
    $\supp(\phi) = \supp(w(\phi))$, the only non-obvious inductive
    case being the last, which we prove next.  For $i = 0,1$, let
    $w_{i} \eqdef w(\phi_{i})$ and suppose that
    $\phi_{i} \leq \phi_{w_{i}}$ and
    $\supp(\phi_{i}) = \supp(\phi_{w_{i}})$.  Then
    $\supp(\phi) = \supp(\phi_{0}) \cup \supp(\phi_{1}) =
    \supp(\phi_{w_{0}}) \cup \supp(\phi_{w_{1}}) =
    \supp(\phi_{w_{0}}\circ\phi_{w_{1}}) = \supp(\phi_{w_{0}w_{1}})$
    and
  \begin{align*}
    \phi & = \phi_{0} \vee \phi_{1}
    \leq \phi_{0} \circ \phi_{1} \leq \phi_{w_{0}}\circ\phi_{w_{1}}
    = \phi_{w(\phi)}
    \,,
  \end{align*}
  where the relation
  $\phi_{0} \vee \phi_{1} \leq \phi_{0} \circ \phi_{1} $ is a
  consequence of $\phi_{i}$, $i = 0,1$, being inflating.
\end{proof}
We shall see later---as a particular instance of
Theorem~\ref{thm:mainRuitenburg}---that
$\phi^{n} = \phi_{\supp(\phi)}$ for some $n$, and for each
$\phi \in \DisjAB$. That is, $\phi_{\supp(\phi)}$ yields a closed
expression of the formula $\phi^{\rho(\phi)}$.
We shall further exploit word formulas in the rest of the section and
heavily rely on the next observation.
\begin{definition}
  For $(A,B) \in \PAB$ and
  $w =(A_1, B_1), \dots, (A_k, B_k) \in \PABast$, we let
\begin{align}
  \label{eq:less}
  (A, B) \Less (A_{1},B_{1}) \ldots (A_{k},B_{k}) \; \tiff\; \exists l\leq k
  \tst A\subseteq
  \bigcup_{j\leq l} A_j ~\tand~ B\subseteq \bigcup_{j\geq l} B_j\;.
\end{align}
\end{definition}
\begin{proposition}
  \label{prop:Less}
  If $(A,B)\Less w$, then $\phi_{(A, B)} \leq \phi_{w}$.
\end{proposition}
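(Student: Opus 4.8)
The plan is to factor $\phi_{(A,B)}$ as a short product of ``atomic'' word formulas, bound that product factor-by-factor from above by the corresponding factors of $\phi_{w}$, and conclude using that $\DisjAB$ is an ordered monoid. Throughout, write $w = (A_{1},B_{1})\cdots(A_{k},B_{k})$ and fix some $l \leq k$ witnessing $(A,B)\Less w$, so that $A \subseteq \bigcup_{j\leq l}A_{j}$ and $B \subseteq \bigcup_{j\geq l}B_{j}$.

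First I would record two elementary monotonicity facts, valid in every Heyting algebra and hence in the free one generated by $\alpha_{1},\dots,\beta_{M},x$: since implication is antitone in its antecedent and $\bigwedge A' \leq \bigwedge A$ whenever $A \subseteq A'$, we have $A \subseteq A' \Rightarrow \phi_{(A,B)} \leq \phi_{(A',B)}$; dually, since implication is monotone in its consequent and $\bigvee B \leq \bigvee B'$ whenever $B \subseteq B'$, we have $B \subseteq B' \Rightarrow \phi_{(A,B)} \leq \phi_{(A,B')}$. In particular $\phi_{(A,\emptyset)} \leq \phi_{(A,B)}$ and $\phi_{(\emptyset,B)} \leq \phi_{(A,B)}$ for all $A,B$. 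Next, iterating the first two identities of Proposition~\ref{prop:ABrelations} (with all intervening $B$-components empty, respectively all $A$-components empty), one obtains the collapsing identities
\[
  \phi_{(A_{1},\emptyset)} \circ \cdots \circ \phi_{(A_{l},\emptyset)} = \phi_{(\bigcup_{j\leq l} A_{j},\,\emptyset)}\,,
  \qquad
  \phi_{(\emptyset,B_{l})} \circ \cdots \circ \phi_{(\emptyset,B_{k})} = \phi_{(\emptyset,\,\bigcup_{j\geq l} B_{j})}\,,
\]
and likewise $\phi_{(A,\emptyset)} \circ \phi_{(\emptyset,B)} = \phi_{(A,B)}$, all instances of Proposition~\ref{prop:ABrelations}.

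Then I would assemble the bound. From $A \subseteq \bigcup_{j\leq l}A_{j}$, $B \subseteq \bigcup_{j\geq l}B_{j}$, the monotonicity facts and the collapsing identities, we get $\phi_{(A,\emptyset)} \leq \phi_{(A_{1},\emptyset)}\circ\cdots\circ\phi_{(A_{l},\emptyset)}$ and $\phi_{(\emptyset,B)} \leq \phi_{(\emptyset,B_{l})}\circ\cdots\circ\phi_{(\emptyset,B_{k})}$. Using \eqref{eq:ordmonoid} (extended from two factors to finitely many by an obvious induction),
\[
  \phi_{(A,B)} = \phi_{(A,\emptyset)}\circ\phi_{(\emptyset,B)}
  \;\leq\; \phi_{(A_{1},\emptyset)}\circ\cdots\circ\phi_{(A_{l},\emptyset)}\circ\phi_{(\emptyset,B_{l})}\circ\cdots\circ\phi_{(\emptyset,B_{k})}\,.
\]
Now merge the two middle factors via Proposition~\ref{prop:ABrelations}, $\phi_{(A_{l},\emptyset)}\circ\phi_{(\emptyset,B_{l})} = \phi_{(A_{l},B_{l})}$, and bound each remaining factor by the corresponding factor of $\phi_{w}$: $\phi_{(A_{j},\emptyset)} \leq \phi_{(A_{j},B_{j})}$ for $j < l$ and $\phi_{(\emptyset,B_{j})} \leq \phi_{(A_{j},B_{j})}$ for $j > l$. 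One further application of \eqref{eq:ordmonoid} then gives $\phi_{(A,B)} \leq \phi_{(A_{1},B_{1})}\circ\cdots\circ\phi_{(A_{k},B_{k})} = \phi_{w}$.

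Finally I would dispose of the boundary cases: if $l=0$ is permitted by the definition of $\Less$, then $A = \emptyset$ and the ``$A$-block'' is the empty composition $\phi_{\epsilon} = x$; since each $\phi_{(A_{j},B_{j})}$ is inflating by \eqref{eq:inflating}, we have $x \leq \phi_{(A_{1},B_{1})}$ and the argument proceeds with that block absorbed. The cases $l = k$ and $k = 0$ are analogous and immediate. The proof is essentially bookkeeping; the only points needing a little care are the index overlap at position $j=l$ --- precisely what the merge $\phi_{(A_{l},\emptyset)}\circ\phi_{(\emptyset,B_{l})} = \phi_{(A_{l},B_{l})}$ resolves --- and keeping the two monotonicity directions straight (antitone in the head set $A$, monotone in the side set $B$).
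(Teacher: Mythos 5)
Your proof is correct and follows essentially the same route as the paper's: both split $\phi_{(A,B)}$ as $\phi_{(A,\emptyset)}\circ\phi_{(\emptyset,B)}$, use the collapsing identities of Proposition~\ref{prop:ABrelations} together with monotonicity in $A$ and $B$ to bound each half by the corresponding "left" and "right" pieces of $w$, and recombine via the ordered-monoid property, with the overlap at index $l$ handled by $\phi_{(A_{l},\emptyset)}\circ\phi_{(\emptyset,B_{l})}=\phi_{(A_{l},B_{l})}$. The only difference is cosmetic ordering of the estimates plus your explicit treatment of the boundary cases, which the paper leaves implicit.
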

\begin{proof}
  Let $w=(A_1, B_1), \dots, (A_k, B_k)$ and let $\ell$ be such that
  \eqref{eq:less} holds. Define
  $w_{L} = (A_1, B_1), \dots, (A_\ell, \emptyset)$ and
  $w_{R} = (\emptyset, B_\ell), \dots, (A_n, B_k)$.
  Observe that 
  \begin{align*}
    \phi_{(A,\emptyset)}
    & \leq \phi_{(\bigcup_{i=1\ldots \ell} A_{i},\emptyset)}
      = \phi_{(A_{1},\emptyset)}\circ \ldots \circ
      \phi_{(A_{\ell},\emptyset)} 
    \leq  \phi_{(A_{1},B_{1})}\circ \ldots \circ
      \phi_{(A_{\ell-1},B_{\ell -1})} \circ \phi_{(A_{\ell},\emptyset)}=
      \phi_{w_{L}}\,,
  \end{align*}
  and, similarly, $\phi_{(\emptyset,B)} \leq \phi_{w_{R}}$.
  It follows that
  $\phi_{(A,B)} = \phi_{(A,\emptyset)} \circ \phi_{(\emptyset,B)} \leq
  \phi_{w_{L} }\circ \phi_{w_{R}} = \phi_{w}$.  
\end{proof}

\subsection{Conjunctions of \Star formulas}
In the next definition, if $X, Y \subseteq \PABast$, then we let
\begin{align*}
  X \cdot Y & \eqdef \set{wv \mid w \in X,\, v \in Y}\,.
\end{align*}
\begin{definition}
  The set $\Branches(\phi)$ of branches of $\phi \in \DisjAB$ is
  defined by induction:
  \begin{align*}
    \Branches(x) & \eqdef \set{\epsilon} \\
    \Branches(\bigvee B \vee \phi) & \eqdef \set{(\emptyset,B)}\cdot \Branches(\phi) \\
    \Branches(\NEC{A}\phi) & \eqdef \set{(A,\emptyset)}\cdot \Branches(\phi) \\
    \Branches(\phi_{0} \vee \phi_{1}) & \eqdef \Branches(\phi_{0}) \cup \Branches(\phi_{1})\,.
  \end{align*}
  The formula $\br(\phi)$ is then defined as follows:
  \begin{align*}
    \br(\phi) & \eqdef \bigvee \set{\phi_{w} \mid w \in \Branches(\phi) }\,. 
  \end{align*}
  A formula $\phi \in \DisjAB$ is a \emph{\Star formula} if
  $\br(\phi) = \phi$.
\end{definition}

\begin{lemma}
  For each $\phi \in \DisjAB$, 
  $\br(\phi) \leq \phi$ and $\supp(\br(\phi)) = \supp(\phi)$.
\end{lemma}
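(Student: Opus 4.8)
The plan is to proceed by structural induction on $\phi \in \DisjAB$, following the grammar \eqref{grammar:disjunctiveAB}. For each constructor I will compute $\Branches(\phi)$ from the recursive clauses, unfold $\br(\phi)$ accordingly, and then compare with $\phi$ using the basic properties of word formulas already established: the substitution identities of Proposition~\ref{prop:ABrelations}, the fact that $\phi_{(A,\emptyset)} \circ \psi$ (resp. $\phi_{(\emptyset,B)} \circ \psi$) equals $\NEC{A}\psi$ (resp. $\bigvee B \vee \psi$) up to the idempotency Lemma~\ref{lemma:idempotency}, and monotonicity of substitution \eqref{eq:ordmonoid} together with inflation \eqref{eq:inflating}. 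The support statement is the easier half and will be handled in parallel, using that $\suppA,\suppB$ are defined by exactly the same recursion scheme as $\Branches$ and that $\supp(\phi_w)$ depends only on the union of the letters of $w$ by \eqref{eq:suppW}.

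In more detail: for $\phi = x$ we have $\Branches(x) = \{\epsilon\}$, so $\br(x) = \phi_\epsilon = x$, and both claims are immediate. For $\phi = \NEC{A}\psi$, every $w \in \Branches(\phi)$ has the form $(A,\emptyset)w'$ with $w' \in \Branches(\psi)$, so $\phi_w = \phi_{(A,\emptyset)} \circ \phi_{w'} = \NEC{A}\phi_{w'}$; since $\NEC{A}(-)$ commutes with joins in its argument (the Heyting identity \eqref{eq:distrimpl}), $\br(\NEC{A}\psi) = \NEC{A}\br(\psi) \leq \NEC{A}\psi = \phi$ by the induction hypothesis and monotonicity of $\NEC{A}(-)$. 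The case $\phi = \bigvee B \vee \psi$ is analogous, using that $\bigvee B \vee (-)$ distributes over joins and is monotone, giving $\br(\bigvee B \vee \psi) = \bigvee B \vee \br(\psi) \leq \bigvee B \vee \psi$. For $\phi = \phi_0 \vee \phi_1$, $\Branches(\phi) = \Branches(\phi_0) \cup \Branches(\phi_1)$, hence $\br(\phi) = \br(\phi_0) \vee \br(\phi_1) \leq \phi_0 \vee \phi_1 = \phi$ directly from the two induction hypotheses. For the support, in each case $\suppA(\br(\phi))$ and $\suppA(\phi)$ reduce through the same recursion to the same union (and likewise for $\suppB$), using \eqref{eq:suppW} to read off $\supp(\phi_w)$; in the join case one uses $\supp(\psi_0 \vee \psi_1) = \supp(\psi_0) \cup \supp(\psi_1)$, which holds by the defining clauses of $\suppA,\suppB$.

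The only mild subtlety—and the step I would be most careful about—is making sure that when $\phi$ has a mixture of head and side subformulas, unfolding $\br(\phi)$ via the $\cdot$ operation on branch sets really does reproduce $\phi$'s structure compositionally; this is where one invokes that in a word formula $\phi_{(A,B)w}$ the leading factor $\phi_{(A,B)}$ is applied by \emph{substitution} into $\phi_w$, so that the recursive clause $\Branches(\NEC{A}\phi) = \{(A,\emptyset)\}\cdot\Branches(\phi)$ matches $\NEC{A}\phi_w = \phi_{(A,\emptyset)}\circ\phi_w$ exactly, and similarly for side subformulas—there is no interaction term to worry about because a single head or side letter applied by substitution to a join of word formulas simply distributes. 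With that observation in hand all four cases close routinely, and both equalities/inequalities follow by structural induction.
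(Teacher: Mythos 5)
Your proof is a structural induction on the grammar, which is exactly the paper's strategy, and the base case, the side-formula case, the binary-join case, and the support half all go through as you describe. There is, however, one incorrect intermediate claim in the head case: you assert $\br(\NEC{A}\psi) = \NEC{A}\br(\psi)$ on the grounds that ``$\NEC{A}(-)$ commutes with joins in its argument (the Heyting identity \eqref{eq:distrimpl})''. Equation \eqref{eq:distrimpl} is $x \impl (y \land z) = (x\impl y)\land(x\impl z)$, i.e.\ distribution of implication over \emph{meets} in the consequent; implication does \emph{not} distribute over joins in a Heyting algebra. In general one only has
\[
\bigvee_{w\in\Branches(\psi)} \NEC{A}\phi_{w} \;\leq\; \NEC{A}\Bigl(\bigvee_{w\in\Branches(\psi)} \phi_{w}\Bigr),
\]
and the converse inequality can fail. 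Fortunately this is exactly the direction you need: $\br(\NEC{A}\psi) = \bigvee_{w}\NEC{A}\phi_{w} \leq \NEC{A}\br(\psi) \leq \NEC{A}\psi$ by the induction hypothesis and monotonicity of $\NEC{A}(-)$, which is precisely how the paper argues (it writes $\leq$, not $=$, at this step). So weaken your equality to an inequality and the head case closes; the distribution you invoke is genuinely valid only in the side case, where $\bigvee B \vee (-)$ does commute with joins. With that one repair your argument coincides with the paper's proof.
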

\begin{proof}
  A straightforward induction:
  \begin{align*}
    \br(x)
    & = \phi_{\epsilon} = x\,. \\
    \br((\bigvee B) \vee \phi) & = \bigvee \set{
      \phi_{(\emptyset,B)}\circ\phi_{w} \mid w \in\Branches(\phi)} \\
    &
    \leq \phi_{(\emptyset,B)}\circ (\bigvee \set{ \phi_{w} \mid w
      \in\Branches(\phi)}) \leq \phi_{(\emptyset,B)}\circ \phi =
    (\bigvee B) \vee \phi\,.
    \\
    \br(\NEC{A} \phi) & = \bigvee \set{
      \phi_{(A,\emptyset)}\circ\phi_{w} \mid w \in\Branches(\phi)} \\
    &
    \leq \phi_{(A,\emptyset)}\circ (\bigvee \set{ \phi_{w} \mid w
      \in\Branches(\phi)}) \leq \phi_{(A,\emptyset)}\circ \phi =
   \NEC{A} \phi\,. \\
    \br(\phi_{0} \vee \phi_{1})
    & = \bigvee \set{\phi_{w} \mid w \in \Br(\phi_{0})} \cup
    \set{\phi_{w} \mid w \in \Br(\phi_{1})} \\
    & = \br(\phi_{0}) \vee \br(\phi_{1}) \leq \phi_{0} \vee
    \phi_{1}\,.
      \tag*{\qedhere}
  \end{align*}
\end{proof}

We come back now to our original goal, that of estimating upper bounds
for formulas $\phi$ of the form $\phi = \bigwedge_{i \in I} \phi_{i}$
as in display~\eqref{eq:difficultConjunction},
where now $\phi_{i} \in \DisjAB$ for each $i \in I$.  The next
Proposition reduces the problem of giving a closed expression for
$\phi^{\rho(\phi)}$ and estimating an upper bound for the Ruitenburg
number of $\phi$ as in \eqref{eq:difficultConjunction} to that of a
conjunction of \Star formulas, that is, formulas of the form
\begin{align}
  \label{eq:simplifiedPhi}
  \phi & \eqdef \bigwedge_{i} \phi_{i}\,,
         \quad \text{with} \quad
         \phi_{i}  \eqdef \bigvee_{j  \in J_{i}} \phi_{w_{i,j}}\;\tand\; w_{i,j} \in \PABast\,.
\end{align}
To understand how we shall use
Proposition~\ref{prop:candidateClosedFormula}, recall that
$\supp(\phi_i)= \supp(\br(\phi_i))$ for all $i$; moreover, we shall
show (Propositions~\ref{prop:winningToClosedRuit}
and~\ref{prop:winningStrategy} below) that
$\bigwedge_i \phi_{\supp(\br(\phi_i))} \leq (\bigwedge_i
\br(\phi_i))^n$ for $n$ large enough. These two facts entail
$\bigwedge_i \phi_{\supp(\phi_i)} \leq \bigwedge_i \br(\phi_i)^n$ (for
large $n$), which is the condition under which
Proposition~\ref{prop:candidateClosedFormula} holds.

\begin{proposition}
  \label{prop:candidateClosedFormula}
  Let $I$ be a finite set, 
  let $\phi_i\in \DisjAB$ for each $i \in I$, and
  let $n \geq 0$; suppose that  
  $\bigwedge_{i} \phi_{\supp(\phi_{i})} \leq (\bigwedge_{i}
  \br(\phi_{i}))^{n}$. Then the following holds:
  \begin{enumerate}[(i)]
  \item $\bigwedge_{i} \phi_{\supp(\phi_{i})} \leq (\bigwedge_{i}
    \phi_{i})^{n}$,
  \item 
    $\bigwedge_{i} \phi_{\supp(\phi_{i})} = (\bigwedge_{i}
    \phi_{i})^{\rho(\bigwedge_{i} \phi_{i})}$, and
  \item
    $\rho(\bigwedge_{i} \phi_{i}) \leq \rho(\bigwedge_{i}
    \br(\phi_{i}))$.
  \end{enumerate}
\end{proposition}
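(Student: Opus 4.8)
The plan is to isolate three elementary facts about the formulas involved and then to derive (i)--(iii) by purely order-theoretic manipulation of the substitution-iterates, never unfolding the formulas themselves. Throughout write $\Phi := \bigwedge_{i} \phi_{i}$, $\Psi := \bigwedge_{i} \br(\phi_{i})$, $\sigma_{i} := \phi_{\supp(\phi_{i})}$ and $S := \bigwedge_{i} \sigma_{i}$. \textbf{Fact 1.} All of $\Phi,\Psi,S$ and each $\sigma_{i}$ are monotone in $x$ (they are built from formulas in which $x$ occurs only positively) and inflating (each formula of $\DisjAB$ is inflating by~\eqref{eq:inflating}, hence so is any finite meet of such, hence so is any substitution-iterate of such); consequently $\Phi^{m} \leq \Phi^{m+1}$ and $\Psi^{m} \leq \Psi^{m+1}$ for every $m \geq 0$, both sequences of iterates are increasing, and any such iterate that is constant at two consecutive steps is constant from then on. \textbf{Fact 2.} By the lemmas $\br(\phi) \leq \phi$ and $\phi \leq \phi_{\supp(\phi)}$ one has $\Psi \leq \Phi \leq \phi_{i} \leq \sigma_{i}$ for each $i$, and a routine induction on $n$ (using monotonicity of $\Phi$ and that $\leq$ is preserved by substitution) gives $\Psi^{n} \leq \Phi^{n}$. \textbf{Fact 3} (the key lemma): \emph{if $f$ is monotone, inflating and $f \leq \sigma_{i}$ for all $i$, then $f^{m} \leq S$ for all $m \geq 0$ and $f \circ S = S$.} For the first part I induct on $m$: $f^{0} = x \leq \sigma_{i}$ because $\sigma_{i} \in \DisjAB$ is inflating, and $f^{k} \leq \sigma_{i}$ forces $f^{k+1} = f \circ f^{k} \leq \sigma_{i} \circ \sigma_{i} = \sigma_{i}$, using $f \leq \sigma_{i}$, monotonicity of $\sigma_{i}$, and the idempotency lemma $\sigma_{i}^{2} = \sigma_{i}$; taking meets over $i$ gives $f^{m} \leq S$. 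For the second part, $S \leq f \circ S$ since $f$ is inflating, while $f \circ S \leq \sigma_{i} \circ S \leq \sigma_{i} \circ \sigma_{i} = \sigma_{i}$ for each $i$, so $f \circ S \leq S$. Fact 3 applies both to $f = \Phi$ and to $f = \Psi$.

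With these facts in hand, I would observe that (i) is immediate: $S \leq \Psi^{n} \leq \Phi^{n}$, the first inequality being the hypothesis and the second being Fact 2. For (ii), put $\rho := \rho(\bigwedge_{i}\phi_{i})$; since the $\Phi$-iterates are increasing and $\Phi^{\rho+2} = \Phi^{\rho}$, Fact 1 yields $\Phi^{\rho} = \Phi^{m}$ for all $m \geq \rho$, hence $\Phi^{n} \leq \Phi^{\rho}$, so together with (i) we get $S \leq \Phi^{\rho}$; on the other hand $\Phi^{\rho} \leq S$ by Fact 3 applied to $\Phi$, whence $\bigwedge_{i}\phi_{\supp(\phi_{i})} = S = \Phi^{\rho} = (\bigwedge_{i}\phi_{i})^{\rho}$. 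For (iii), set $r := \rho(\bigwedge_{i}\br(\phi_{i}))$; the $\Psi$-iterates are increasing and constant from step $r$ on (Fact 1), so $\Psi^{n} \leq \Psi^{r}$, and combining with the hypothesis and Fact 3 for $\Psi$ gives $S \leq \Psi^{n} \leq \Psi^{r} \leq S$, i.e. $\Psi^{r} = S$. Then $\Phi^{r} \geq \Psi^{r} = S$ by Fact 2 and $\Phi^{r} \leq S$ by Fact 3 for $\Phi$, so $\Phi^{r} = S$; finally $\Phi^{r+1} = \Phi \circ \Phi^{r} = \Phi \circ S = S = \Phi^{r}$ using Fact 3 again, hence $\Phi^{r+2} = \Phi^{r+1} = \Phi^{r}$ and therefore $\rho(\bigwedge_{i}\phi_{i}) \leq r = \rho(\bigwedge_{i}\br(\phi_{i}))$.

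I do not anticipate a genuine obstacle: once Fact 3 is isolated, everything reduces to chasing inequalities among iterates. The two points requiring attention are, first, to prove $f^{m} \leq S$ for \emph{all} $m \geq 0$ — in particular at $m = 0$, where one uses that each $\sigma_{i} = \phi_{(A,B)}$ is itself a formula of $\DisjAB$ and hence inflating — so that the degenerate cases ($\rho = 0$, or $r = 0$, forcing $\Phi = \Psi = x$) are absorbed without separate treatment; and second, to keep in mind throughout that $\leq$ here denotes provable entailment in \Ipc, so that substitution preserves it and the replacement (monotonicity) lemma is available — this is exactly the calculus already in force throughout the section.
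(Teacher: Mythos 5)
Your proof is correct and follows essentially the same route as the paper's: the decisive ingredients in both are the chain $\br(\phi_i)\leq\phi_i\leq\phi_{\supp(\phi_i)}$, the idempotency $\phi_{\supp(\phi_i)}^2=\phi_{\supp(\phi_i)}$ forcing every iterate of $\bigwedge_i\phi_i$ below $\bigwedge_i\phi_{\supp(\phi_i)}$, and inflation making the iterates increase and hence stabilize once they hit that meet. The only cosmetic difference is that you package this as a standalone lemma (your Fact~3) and re-run it for $\bigwedge_i\br(\phi_i)$ in part~(iii), where the paper instead invokes~(ii) for the branch formulas via $\supp(\br(\phi_i))=\supp(\phi_i)$.
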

\begin{proof}
  Statement (i) of the proposition follows from
  $\bigwedge_{i} \br(\phi_{i}) \leq \bigwedge_{i} \phi_{i}$, so
  $(\bigwedge_{i} \br(\phi_{i}))^{n} \leq (\bigwedge_{i}
  \phi_{i})^{n}$ and
  $\bigwedge_{i} \phi_{\supp(\phi_{i})} \leq (\bigwedge_{i}
  \br(\phi_{i}))^{n} \leq (\bigwedge_{i} \phi_{i})^{n}$.
  We observe now that the relation
  $\bigwedge_{i} \phi_{\supp(\phi_{i})} \leq (\bigwedge_{i}
  \phi_{i})^{n}$ implies 
  \begin{align*}
    \bigwedge_{i} \phi_{\supp(\phi_{i})} = (\bigwedge_{i}
    \phi_{i})^{n}\,.
  \end{align*}
  To this goal, it is enough to argue that
  $(\bigwedge_{i} \phi_{i})^{n} \leq \bigwedge_{i}
  \phi_{\supp(\phi_{i})}$, for each $n \geq 0$, which follows from
  $(\bigwedge_{i} \phi_{i})^{n} \leq \phi_{i}^{n} \leq
  \phi_{\supp(\phi_{i})}^{n} = \phi_{\supp(\phi_{i})}$ (since
  $ \phi_{\supp(\phi_{i})}$ is idempotent), for each $i \in I$.

  Therefore, if $(i)$ holds, then
  $\bigwedge_{i} \phi_{\supp(\phi_{i})} \leq (\bigwedge_{i}
  \phi_{i})^{n +1}$ as well, since
  $(\bigwedge_{i} \phi_{i})^{n} \leq (\bigwedge_{i} \phi_{i})^{n +1}$,
  and then
  \begin{align*}
    (\bigwedge_{i} \phi_{i})^{n+1} & =\bigwedge_{i}
    \phi_{\supp(\phi_{i})} = (\bigwedge_{i} \phi_{i})^{n}\,.
  \end{align*}
  From these relations we immediately infer (ii). For (iii) we argue
  as follows. Let $K_{0 }= \rho(\bigwedge_{i} \br(\phi_{i}))$ and
  $K_{1} = \rho(\bigwedge_{i} \phi_{i})$; since
  $\supp(\phi_{i}) = \supp(\br(\phi_{i}))$, we also derive
  $\bigwedge_{i} \phi_{\supp(\phi_{i})} = (\bigwedge_{i}
  \br(\phi_{i}))^{K_{0}}$ as an instance of (ii).  The relation
  $K_{1} \leq K_{0}$ follows then by the inequalities
  \begin{align*}
    (\bigwedge_{i} \br(\phi_{i}))^{k} & \leq (\bigwedge_{i}
    \phi_{i})^{k} \leq \bigwedge_{k}
    \supp(\phi_{i}) = (\bigwedge_{i}
    \phi_{i})^{K_{1}}  = (\bigwedge_{i}
    \br(\phi_{i}))^{K_{0}} \,,
  \end{align*}
  valid for any $k \geq 0$.
\end{proof}

Let us give an explicit form to the iterates of a formula $\phi$ as in
\eqref{eq:simplifiedPhi}. To this goal, we shall assume that
$J_{i} = \set{1,\ldots ,k} = [k]$ for each $i \in I$. We do not loose
generality with this assumption, since the formula $\phi_{i}$ is
equivalent to $\phi_{i} \vee \phi_{\epsilon}$. We shall make use of
the distributive law (of disjunctions w.r.t. conjuctions) in the
following form:
\begin{align}
  \label{eq:distr}
  \bigvee_{j \in [k]} \bigwedge_{i \in I} X_{j,i}
  & = \bigwedge_{f : [k] \xrightarrow{} I} \bigvee_{j \in [k]} X_{j,f(j)}\,.
\end{align}

Let us also introduce the following notation:
\begin{align*}
  \Str_{n} & \eqdef \prod_{1 \leq \ell \leq n} I^{[k]^{\ell-1}}\,.
\end{align*}
An element $f \in \Str_{n}$ is a tuple $(f_{1},\ldots ,f_{n})$ with
$f_{\ell} : [k]^{\ell -1} \rto I$, for each $\ell = 1,\ldots ,n$. In
particular, for $\ell = 1$, we identify
$f_{1} \in I^{[k]^{0}} \iso I^{[1]}$ with an element of $I$. We think
of a tuple $(f_{1},\ldots ,f_{n}) \in \Str_{n}$ as a memory aware
strategy for the first player of a two players game: the strategy
tells him how to incrementally choose a tuple
$(i_{1},\ldots ,i_{n}) \in I^{n}$ as a function of the opponent's
choices $(j_{1},\ldots j_{n-1})$ (where $j_{\ell}\in [k]$ for
$\ell = 1,\ldots ,n-1$), so
$i_{\ell} = f_{\ell}(j_{1},\ldots ,j_{\ell-1})$ for
$\ell = 1,\ldots ,n$.
We
recall that there is a canonical bijection between
$I \times \Str_{n}^{[k]}$ and $\Str_{n+1}$, as witnessed by the
following computations:
\begin{align*}
  I \times \Str_{n}^{[k]} & = I \times (\prod_{1 \leq \ell \leq n}
  I^{[k]^{\ell -1}})^{[k]}
  \iso I^{[k]^{0}}\times \prod_{1 \leq \ell \leq n} I^{[k]^{\ell}} \iso
  \Str_{n +1}\,.
\end{align*}
An explicit description of the bijection is as follows:
\begin{align*}
  (f_{0},g) \in I \times \Str_{n}^{[k]}
  & \mapsto (f_{0},h_{1},\ldots ,h_{n}) \in \Str_{n+1}\,,
    \intertext{where for $\ell \geq 1$ we have}
    h_{\ell}(j_{1},j_{2},\ldots ,j_{\ell-1}) & = [g(j_{1})]_{\ell}(j_{2},\ldots ,j_{\ell -1})\,.
\end{align*}

\begin{proposition}
  \label{prop:indexes}
  Let $\phi$ be of the form as in display~\eqref{eq:simplifiedPhi}.
  For each $n \geq 1$, we have
  \begin{align}
    \label{eq:formeNormale}
    \phi^{n} & = \bigwedge_{(f_{1},\ldots ,f_{n}) \in \Str_{n}} \bigvee_{j_{1}}
    \phi_{w_{f_{1},j_{1}}}(\bigvee_{j_{2}} \phi_{w_{f_{2}(j_{1}),j_{2}}}(
    \ldots \bigvee_{j_{n}} \phi_{w_{f_{n}(j_{1},j_{2}\ldots
      j_{n-1}),j_{n}}} ))\,.
\end{align}
\end{proposition}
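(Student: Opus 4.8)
The plan is to argue by induction on $n \geq 1$. For the base case $n=1$, recall that $\Str_{1} = I^{[k]^{0}} \iso I$, an element $f_{1}$ being identified with the value it takes on the unique empty tuple; thus the right-hand side of \eqref{eq:formeNormale} for $n=1$ is $\bigwedge_{i \in I} \bigvee_{j \in [k]} \phi_{w_{i,j}}$, which is exactly $\phi = \phi^{1}$ by the form \eqref{eq:simplifiedPhi} (using $J_{i} = [k]$). For the inductive step I would write $\phi^{n+1} = \phi \circ \phi^{n} = \phi[\phi^{n}/x]$ and substitute the inductive hypothesis $\phi^{n} = \bigwedge_{g \in \Str_{n}} \Psi_{g}$, where I write $\Psi_{g}$ for the $g$-th conjunct appearing on the right of \eqref{eq:formeNormale} at level $n$.

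The one structural fact I would isolate first is that \emph{every word formula preserves finite meets}: if $w = (A_{1},B_{1})\ldots(A_{m},B_{m})$ then $\phi_{w} = \phi_{(A_{1},B_{1})} \circ \cdots \circ \phi_{(A_{m},B_{m})}$, and each factor $\phi_{(A,B)}(y) = \NEC{A}((\bigvee B) \vee y)$ is the composite of $y \mapsto (\bigvee B) \vee y$ and $z \mapsto (\bigwedge A) \impl z$, the first preserving finite meets by distributivity of the lattice and the second by the Heyting law \eqref{eq:distrimpl}; composition of meet-preserving maps is meet-preserving. Using this on each $\phi_{w_{i,j}}$ and expanding $\phi$ by \eqref{eq:simplifiedPhi}:
\begin{align*}
  \phi^{n+1} & = \bigwedge_{i \in I} \bigvee_{j \in [k]} \phi_{w_{i,j}}\bigl(\,\bigwedge_{g \in \Str_{n}} \Psi_{g}\,\bigr) = \bigwedge_{i \in I} \bigvee_{j \in [k]} \bigwedge_{g \in \Str_{n}} \phi_{w_{i,j}}(\Psi_{g})\,.
\end{align*}

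Applying the distributive law \eqref{eq:distr} to commute $\bigvee_{j}$ past $\bigwedge_{g}$ rewrites the inner part as $\bigwedge_{\sigma : [k] \to \Str_{n}} \bigvee_{j \in [k]} \phi_{w_{i,j}}(\Psi_{\sigma(j)})$, whence $\phi^{n+1} = \bigwedge_{(i,\sigma) \in I \times \Str_{n}^{[k]}} \bigvee_{j \in [k]} \phi_{w_{i,j}}(\Psi_{\sigma(j)})$. It remains to transport this conjunction along the canonical bijection $I \times \Str_{n}^{[k]} \iso \Str_{n+1}$ recalled just before the statement, sending $(f_{0},\sigma)$ to the tuple $(f_{0},h_{1},\ldots,h_{n})$ whose components satisfy $h_{\ell}(j_{1},\ldots) = [\sigma(j_{1})]_{\ell}(\ldots)$ --- that is, after the opening move $j_{1}$ the strategy tree attached to $(f_{0},\sigma)$ restricts to the strategy $\sigma(j_{1}) \in \Str_{n}$. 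Unfolding $\Psi_{\sigma(j_{1})}$ by the definition of the conjuncts in \eqref{eq:formeNormale} at level $n$, one then checks that $\bigvee_{j} \phi_{w_{f_{0},j}}(\Psi_{\sigma(j)})$ is literally the conjunct of the right-hand side of \eqref{eq:formeNormale} at level $n+1$ indexed by the image of $(f_{0},\sigma)$; since that image ranges over all of $\Str_{n+1}$, the claimed identity follows.

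The mathematical content here is light --- only lattice distributivity, \eqref{eq:distrimpl}, and \eqref{eq:distr} are used --- so the step that needs genuine care, and the one I would write out in full, is the final index bookkeeping: verifying that "choose $i$, then $j$, then play the strategy $\sigma(j)$" matches exactly the tree of strategy functions $(h_{1},\ldots,h_{n})$ produced by the bijection, so that the nested disjunction coming out of the inductive hypothesis lines up symbol-for-symbol with the level-$(n+1)$ formula. Introducing the shorthand $\Psi_{g}$ for the conjuncts, and possibly a picture of the strategy trees, is what makes this matching transparent.
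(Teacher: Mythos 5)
Your proof is correct and follows essentially the same route as the paper's: induction on $n$, commuting the word formulas $\phi_{w_{i,j}}$ past the conjunction from the inductive hypothesis, applying the distributive law \eqref{eq:distr}, and reindexing along the bijection $I \times \Str_{n}^{[k]} \iso \Str_{n+1}$. The only addition is that you justify why word formulas preserve finite meets (via distributivity and \eqref{eq:distrimpl}), a fact the paper merely asserts.
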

In order to ease reading, we shall write in the proof of the
proposition and in the rest of this section $\phi_{i,j}$ in place of
the more appropriate $\phi_{w_{i,j}}$.
\begin{proof}[Proof of Proposition~\ref{prop:indexes}]
  When $n = 1$,  then
  equation~\eqref{eq:formeNormale} reduces to
  \begin{align*}
    \phi & = \bigwedge_{f_{1} \in I} \bigvee_{j_{1} \in [k]} \phi_{f_{1},j_{1}}\,,
  \end{align*}
  so it holds simply by definition of $\phi$.
  Notice now that a word formula, and in particular each $\phi_{i,j}$,
  commutes with conjunctions; we use this fact in the inductive step.
  We suppose that \eqref{eq:formeNormale} holds for $n \geq 1$ and
  compute as follows:
  \pptiny
  \begin{align*}
    \phi^{n+1} & = \phi(\phi^{n}) \\
    & = \bigwedge_{i_{0} \in I}\bigvee_{j_{0} \in [k]}\phi_{i_{0},j_{0}}(\bigwedge_{f
      \in \Str_{n}} \bigvee_{j_{1}} \phi_{f_{1},j_{1}}(\bigvee_{j_{2}}
    \phi_{f_{2}(j_{1}),j_{2}}( \ldots \bigvee_{j_{n}}
    \phi_{f_{n}(j_{1},j_{2}\ldots
      j_{n-1}),j_{n}} )))\,,
    \tag*{by the inductive hypothesis,}
    \\
    & = \bigwedge_{i_{0} \in I}\bigvee_{j_{0} \in [k]}\bigwedge_{f
      \in \Str_{n}} \phi_{i_{0},j_{0}}(\bigvee_{j_{1}} \phi_{f_{1},j_{1}}(\bigvee_{j_{2}}
    \phi_{f_{2}(j_{1}),j_{2}}( \ldots \bigvee_{j_{n}}
    \phi_{f_{n}(j_{1},j_{2}\ldots
      j_{n-1}),j_{n}} )))\,,
    \tag*{since
      $\phi_{i_{0},j_{0}}$ commutes with conjunctions,}
    \\
    & = \bigwedge_{i_{0} \in I} \bigwedge_{g : [k] \rto[]
      \Str_{n}}\bigvee_{j_{0} \in [k]}\phi_{i_{0},j_{0}}(\bigvee_{j_{1}}
    \phi_{g(j_{0})_{1},j_{1}}(\bigvee_{j_{2}} \phi_{g(j_{0})_{2}(j_{1}),j_{2}}(\,\ldots 
    \ldots \bigvee_{j_{n}} \phi_{g(j_{0})_{n}(j_{1},j_{2}\ldots
      j_{n-1}),j_{n}} )))\,,
    \tag*{using~\eqref{eq:distr},}
    \\
    & = \bigwedge_{i_{0} \in I} \bigwedge_{g : [k] \rto[]
      \Str_{n}}\bigvee_{j_{1}}\phi_{i_{0},j_{1}}(\bigvee_{j_{2}}
    \phi_{g(j_{1})_{1},j_{2}}(\bigvee_{j_{3}} \phi_{g(j_{1})_{2}(j_{2}),j_{3}}(\,\ldots 
    \ldots \bigvee_{j_{n+1}} \phi_{g(j_{1})_{n}(j_{2},j_{3}\ldots
      j_{n}),j_{n+1}} ))) \\
    & = \bigwedge_{h \in
      \Str_{n +1}}
    \bigvee_{j_{1}}\phi_{h_{1},j_{1}}(\bigvee_{j_{2}}
    \phi_{h_{2}(j_{1}),j_{2}}(\bigvee_{j_{3}}
    \phi_{h_{3}(j_{1},j_{2}),j_{3}}( \,\ldots
    \ldots \bigvee_{j_{n+1}} \phi_{h_{n+1}(j_{1},j_{2},j_{3}\ldots
      j_{n}),j_{n+1}} ))) \,.
    \tag*{\qedhere}
  \end{align*}
  \ppnormal
\end{proof}

\subsection{A game for iterated conjunctions of \Star formulas}
 Let $\phi = \bigwedge_{i \in I} \phi_{i}$ with
$\phi_{i} =\bigvee_{j \in [k]} \phi_{w_{i,j}}$. For each $K \geq 1$,
we describe next a two-player game $\Gphi$ (between Eve and Adam, and
where Adam is the first player) with the following property: if Eve
has a winning straetgy in $\Gphi$, then the relation
\begin{align*}
  \bigwedge_{i \in I} \phi_{\supp(\phi_{i})} & \leq \phi^{K}
\end{align*}
holds. Therefore, using Proposition~\ref{prop:candidateClosedFormula},
if Eve has a winning straetgy in $\Gphi$, then
$\phi^{\rho(\phi)} = \bigwedge_{i} \phi_{\supp(\phi_{i})}$ and that
$\rho(\phi) \leq K$.
Positions and moves of the game $\Gphi$ are as follows.  Adam's
positions are of the form $(i_{1},j_{1}) \ldots (i_{n},j_{n})$, where
$n \leq K$ and, for $\ell = 1,\ldots ,n$, $i_{\ell} \in I$ and
$j_{\ell} \in [k]$. In such a position (when $n < K$) Adam chooses
$i_{n+1} \in I$ and moves to the position
$(i_{1},j_{1}) \ldots (i_{n},j_{n})(i_{n+1},?)$. In this position Eve
chooses $j_{n+1}$ and moves to
$(i_{1},j_{1}) \ldots (i_{n},j_{n})(i_{n+1},j_{n+1})$.  The length of
a position $(i_{1},j_{1}) \ldots (i_{n},j_{n})$ is the integer $n$.
The initial position is $\epsilon$ (the empty sequence or, in other
words, the sequence of length $n = 0$).

To each of Adam's position $p = (i_{1},j_{1}) \ldots (i_{k},j_{k})$,
let $w_{p}= w_{i_{1},j_{1}}\ldots w_{i_{k},j_{k}}$.  A terminal
position $p = (i_{1},j_{1}) \ldots (i_{K},j_{K})$ is a win for Eve
(and a loss for Adam) if, for some $i \in I$,
$\supp(\phi_{i}) \leq \phi_{w}$ with
$w = w_{i_{1},j_{1}}\ldots w_{i_{K},j_{K}}$.

\begin{proposition}
  \label{prop:winningToClosedRuit}
  If Eve has a winning strategy in the game $\Gphi$, then
  \begin{align*}
    \bigwedge_{i \in I} \supp(\phi_{i}) & \leq \phi^{K}\,.
  \end{align*}
\end{proposition}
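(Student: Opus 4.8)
The plan is to combine the explicit normal form for $\phi^{K}$ furnished by Proposition~\ref{prop:indexes} with the identification of Adam's strategies in $\Gphi$ with elements of $\Str_{K}$. Since Proposition~\ref{prop:indexes} writes $\phi^{K}$ as a conjunction indexed by tuples $f=(f_{1},\ldots,f_{K})\in\Str_{K}$, it suffices to prove that for every such $f$ the corresponding conjunct
\begin{align*}
  E_{f} & \eqdef \bigvee_{j_{1}}\phi_{w_{f_{1},j_{1}}}\bigl(\bigvee_{j_{2}}\phi_{w_{f_{2}(j_{1}),j_{2}}}(\ldots\bigvee_{j_{K}}\phi_{w_{f_{K}(j_{1},\ldots,j_{K-1}),j_{K}}}(x)\ldots)\bigr)
\end{align*}
satisfies $\bigwedge_{i\in I}\phi_{\supp(\phi_{i})}\leq E_{f}$.

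First I would fix $f\in\Str_{K}$ and regard it as Adam's strategy in $\Gphi$. Letting Eve answer with her winning strategy yields a unique terminal play $p=(i_{1},j_{1})\cdots(i_{K},j_{K})$ in which $i_{\ell}=f_{\ell}(j_{1},\ldots,j_{\ell-1})$ for each $\ell=1,\ldots,K$ (this is exactly what it means for $f$, viewed as a strategy, to dictate Adam's moves against Eve's choices $j_{1},\ldots,j_{K-1}$). Since Eve's strategy is winning, $p$ is a win for her, so there is $i\in I$ with $\phi_{\supp(\phi_{i})}\leq\phi_{w_{p}}$, where $w_{p}=w_{i_{1},j_{1}}\cdots w_{i_{K},j_{K}}$; moreover, since $w\mapsto\phi_{w}$ is a homomorphism from the free monoid $(\PABast,\cdot,\epsilon)$ to the substitution monoid $(\DisjAB,\circ,x)$, we have $\phi_{w_{p}}=\phi_{w_{i_{1},j_{1}}}\circ\cdots\circ\phi_{w_{i_{K},j_{K}}}$.

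It then remains to check that this composite lies below $E_{f}$, which is done by reading off from $p$ a single branch of the nested disjunctions. For $1\leq\ell\leq K$ let $E_{\ell}$ be the subexpression of $E_{f}$ of the form $\bigvee_{j_{\ell}}\phi_{w_{f_{\ell}(j_{1},\ldots,j_{\ell-1}),j_{\ell}}}(E_{\ell+1})$ obtained after the outer indices $j_{1},\ldots,j_{\ell-1}$ have been set to the values of $p$ (so $E_{1}=E_{f}$ and $E_{K+1}=x$); note that with these choices $f_{\ell}(j_{1},\ldots,j_{\ell-1})=i_{\ell}$. By downward induction on $\ell$ one proves $\phi_{w_{i_{\ell},j_{\ell}}}\circ\cdots\circ\phi_{w_{i_{K},j_{K}}}\leq E_{\ell}$: for $\ell=K$ this holds because $\phi_{w_{i_{K},j_{K}}}(x)$ is one of the disjuncts of $E_{K}$; for the step, one selects the disjunct of $E_{\ell}$ with index $j_{\ell}$ and applies $\phi_{w_{i_{\ell},j_{\ell}}}\circ(-)$, which is monotone by \eqref{eq:ordmonoid}, to the inductive inequality for $E_{\ell+1}$. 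At $\ell=1$ this gives $\phi_{w_{p}}=\phi_{w_{i_{1},j_{1}}}\circ\cdots\circ\phi_{w_{i_{K},j_{K}}}\leq E_{f}$, hence $\bigwedge_{i'\in I}\phi_{\supp(\phi_{i'})}\leq\phi_{\supp(\phi_{i})}\leq\phi_{w_{p}}\leq E_{f}$. Taking the conjunction over all $f\in\Str_{K}$ and invoking Proposition~\ref{prop:indexes} gives $\bigwedge_{i\in I}\phi_{\supp(\phi_{i})}\leq\phi^{K}$. No single inequality here is delicate; the only real obstacle is the bookkeeping, namely matching each $f\in\Str_{K}$ with an Adam strategy so that Eve's response is coherent with the nesting of the normal form, and then consistently extracting one branch of the nested disjunctions from that play.
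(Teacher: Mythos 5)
Your proposal is correct and follows essentially the same route as the paper's proof: rewrite $\phi^{K}$ via Proposition~\ref{prop:indexes}, identify each conjunct index $f\in\Str_{K}$ with an Adam strategy, play it against Eve's winning strategy, and use the winning condition of the resulting play to get some $i$ with $\phi_{\supp(\phi_{i})}\leq\phi_{w_{p}}$. The only difference is that your downward induction spells out the final inequality $\phi_{w_{p}}\leq E_{f}$, which the paper's proof leaves implicit.
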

\begin{proof}
  In view of \eqref{eq:formeNormale} we need to show that, for
  any $f \in \Str_{K}$,
  $\bigwedge_{i \in I} \supp(\phi_{i}) \forces \oplus({f})$, where
  \begin{align*}
    \oplus({f}) & \eqdef
    \bigvee_{j_{1}}
    \phi_{f_{1},j_{1}}(\bigvee_{j_{2}} \phi_{f_{2}(j_{1}),j_{2}}(
    \ldots \bigvee_{j_{K}} \phi_{f_{K}(j_{1},j_{2}\ldots
      j_{K-1}),j_{K}} ))\,.
  \end{align*}
  Let $f \in \Str_{K}$ be fixed and observe that such an $f$ yields a
  strategy (not a winning one) for Adam in the game $\Gphi$.  Now, if
  $g$ is a winning strategy for Eve in this game, then $f$ and $g$
  determine a play $f \vs g$ in the game such that, for some
  $i \in I$, $\phi_{\supp(\phi_{i})} \leq \phi_{w_{f \vs g}}$.  We
  have then $\bigwedge_{i} \phi_{\supp(\phi_{i})}
  \leq \phi_{\supp(\phi_{i})} \leq \phi_{w_{f \vs g}} \leq
  \oplus({f})$.
\end{proof}

Recall that $\card(\A) = N$ and $\card(\B) = M$. 
\begin{proposition} Eve has a winning strategy in the game
  $\Gphi[(N+1)(M +1)]$.
  \label{prop:winningStrategy}
\end{proposition}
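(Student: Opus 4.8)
The plan is to exhibit an explicit strategy for Eve and to bound its length by a two–level counting argument, one level for each of the finite sets $\A$ and $\B$. Recall that $\phi_i=\bigvee_{j\in[k]}\phi_{w_{i,j}}$, so that $\suppA(\phi_i)=\bigcup_j\suppA(\phi_{w_{i,j}})$ and $\suppB(\phi_i)=\bigcup_j\suppB(\phi_{w_{i,j}})$ by the very definition of $\suppA,\suppB$. Along a play Eve keeps track of a \emph{prefix length} $P$ (her memory), from which two accumulators are derived, namely $C_A=\suppA(\phi_{w_{i_1,j_1}\ldots w_{i_P,j_P}})$ and $C_B=\suppB(\phi_{w_{i_{P+1},j_{P+1}}\ldots w_{i_n,j_n}})$, where $n$ is the index of the current round; initially $P=0$ and $C_A=C_B=\emptyset$. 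When, at round $n+1$, Adam plays $i$, Eve reacts as follows. (i) If $\suppA(\phi_i)\subseteq C_A$ and $\suppB(\phi_i)\subseteq C_B$, Eve has already won, with witness $i$ and split taken at the end of the word played during rounds $1,\ldots,P$; she plays arbitrarily for the rest of the game. (ii) If $\suppA(\phi_i)\not\subseteq C_A$, she picks $j$ with $\suppA(\phi_{w_{i,j}})\not\subseteq C_A$ (one exists since the union of the $\suppA(\phi_{w_{i,j}})$ equals $\suppA(\phi_i)$), plays it, and then sets $P:=n+1$ and $C_B:=\emptyset$; the new $C_A$ is the $\suppA$ of rounds $1,\ldots,n+1$ and strictly contains the old one. (iii) Otherwise $\suppA(\phi_i)\subseteq C_A$ but $\suppB(\phi_i)\not\subseteq C_B$, and she picks $j$ with $\suppB(\phi_{w_{i,j}})\not\subseteq C_B$, plays it, and leaves $P$ unchanged; the new $C_B$ strictly contains the old one.

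Next I would bound the number of rounds before Eve wins. Call a round of type (ii) an $A$-step and one of type (iii) a $B$-step. Since $P$ never decreases, $C_A$ never shrinks, and it strictly grows at every $A$-step; as $C_A\subseteq\A$ this gives at most $N=\card(\A)$ $A$-steps in any play. The $A$-steps cut the rounds preceding the first round of type (i) into at most $N+1$ maximal runs of consecutive $B$-steps; within such a run $C_B$ is reset to $\emptyset$ at its start (or is $\emptyset$ from the beginning of the play) and strictly grows at each round of the run, so, as $C_B\subseteq\B$, each run has at most $M=\card(\B)$ rounds, hence at most $(N+1)M$ $B$-steps altogether. Therefore a play whose first $n$ rounds are all of type (ii) or (iii) satisfies $n\le N+(N+1)M=(N+1)(M+1)-1$; consequently, in a play of length $(N+1)(M+1)$, Eve's strategy forces a round of type (i) no later than round $(N+1)(M+1)$, i.e. it wins.

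Finally I would check that a round of type (i) is genuinely a win of $\Gphi[(N+1)(M+1)]$. At such a round, with witness $i$ and prefix value $P$, write the terminal word as $w=u_1u_2$, where $u_1=w_{i_1,j_1}\ldots w_{i_P,j_P}$ is the portion played during the prefix rounds and $u_2$ is the rest. By \eqref{eq:suppW}, $\suppA(\phi_{u_1})=C_A\supseteq\suppA(\phi_i)$ and $\suppB(\phi_{u_2})\supseteq C_B\supseteq\suppB(\phi_i)$, the middle inclusion holding because $u_2$ contains all of the rounds $P+1,\ldots,n$ whose side subformulas make up $C_B$, plus possibly more; splitting $w$ at the end of $u_1$ shows $\supp(\phi_i)\Less w$, whence $\phi_{\supp(\phi_i)}\le\phi_w$ by Proposition~\ref{prop:Less}, which is exactly Eve's winning condition. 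The main difficulty I anticipate is not in these verifications but in pinning down the strategy so that the count lands exactly on $(N+1)(M+1)$: this is why $C_B$ must be reset precisely at $A$-steps and why one counts $N+1$ runs rather than $N$. Two further points deserve a line each: that Eve's arbitrary moves after the winning round cannot spoil $\supp(\phi_i)\Less w$ (because the relation $\Less$, a single split point with the $A$-part absorbed to the left and the $B$-part to the right, is preserved when the word is extended on the right), and that the identities $\suppA(\phi_i)=\bigcup_j\suppA(\phi_{w_{i,j}})$ and $\suppB(\phi_i)=\bigcup_j\suppB(\phi_{w_{i,j}})$ are exactly what guarantee a progressing choice of $j$ in cases (ii) and (iii). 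Together with Proposition~\ref{prop:winningToClosedRuit} this settles the proposition, and, fed into Proposition~\ref{prop:candidateClosedFormula}, it yields both $\phi^{\rho(\phi)}=\bigwedge_i\phi_{\supp(\phi_i)}$ and $\rho(\phi)\le(N+1)(M+1)$.
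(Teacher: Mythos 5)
Your proof is correct and follows essentially the same route as the paper's: Eve greedily enlarges an $\A$-accumulator (resetting the $\B$-accumulator whenever it grows) and otherwise enlarges the $\B$-accumulator, the count of at most $N$ $A$-steps plus $(N+1)M$ $B$-steps matching the paper's ``$(N+1)$ values of $A_p$ times $(M+1)$ values of $B_p$'', and the win is certified via $\supp(\phi_i)\Less w$ and Proposition~\ref{prop:Less} exactly as in the paper. The only differences are presentational (your memory is a prefix pointer $P$ rather than the pair $(A_p,B_p)$ itself, and you make explicit the right-extension stability of $\Less$ that the paper leaves implicit).
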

\begin{proof}
  Eve keeps a memory in order to decide how to move. Her memory is a
  pair $(A,B) \in \PAB$ and, at the beginning of the play,
  $(A,B) = (\emptyset,\emptyset)$.
  
  At each position $p = (i_{1},j_{1}) \ldots (i_{\ell},j_{\ell})$ of the
  play, if the memory is $(A_{p},B_{p})$, then
  $A_{p} = \suppA(w_{p})$, where
  $w_{p} = w_{i_{1},j_{1}}w_{i_{2},j_{2}}\ldots w_{i_{\ell},j_{\ell}}$.  In
  particular, if $p'$ is a prefix of $p$, then
  $A_{p'} \subseteq A_{p}$.  Moreover, if $w_{p} = w_{0}w_{1}$ with
  $w_{0}$ being the shortest prefix of $w_{p}$ such that
  $\suppA(w_{0}) =A_{p}$, then $B_{p} \subseteq \suppB(w_{1})$.
  Notice that these conditions imply that $(A_{p},B_{p}) \Less w_{p}$,
  so $\phi_{(A_{p},B_{p})} \leq \phi_{w_{p}}$ by
  Lemma~\ref{prop:Less}.

  Let
  $p = (i_{1},j_{1}) \ldots (i_{\ell},j_{\ell})$. At position
  $p (i_{\ell+1},?)$, Eve chooses $j_{\ell+1}$ so that, if
  $p' = p(i_{\ell+1},j_{\ell+1})$, $\suppA(w_{p'})$ is stricly greater than
  $A_{p} = \suppA(w_{p})$. If it is possible to choose such $j_{\ell+1}$,
  then she updates her memory to $(\supp_{A}(w_{p'}),\emptyset)$.
  Otherwise, if it is not possible to choose $j_{\ell+1}$ with these
  properties, then Eve chooses $j_{\ell+1}$ so
  $\suppB(w_{i_{\ell+1},j_{\ell+1}})$ strictly includes $B_{p}$. She updates
  then the memory to
  $(A_{p'},B_{p'}) = (A_{p},B_{p} \cup
  \suppB(\phi_{w_{i_{\ell+1},j_{\ell+1}}}))$.
  If it is not possible to operate that kind of choices, then Eve chooses some
  $j_{\ell+1}$ at random and sets $(A_{p'},B_{p'}) = (A_{p},B_{p})$.

  Now, in a play, there are at most $N +1$ values for $A_{p}$ and, for
  each fixed $A_{p}$, there are at most $M+1$ values for
  $B_{p}$. Therefore, in $(N+1)(M +1)$ rounds either
  \begin{itemize}
  \item[(a)] the play visits an Eve's position
    $p (i_{\ell+1},?)$---therefore with $\ell < (N+1)(M+1)$ and $p$ of
    the form $(i_{1},j_{1}) \ldots (i_{\ell},j_{\ell})$---where she
    cannot extend $A_{p}$ nor $B_{p}$;
    that is, we have $\suppA(w_{i_{\ell+1},j}) \subseteq A_{p}$ and
    $\suppB(w_{i_{\ell+1},j}) \subseteq B_{p}$, for each $j\in [k]$;
    or 
  \item[(b)] the play ends up in an Adam's position
    $p = (i_{1},j_{1}) \ldots (i_{\ell},j_{\ell})$ with $\ell = (N+1)(M+1)$,
    where now $A_{p} = \A$ and $B_{p} = \B$.
  \end{itemize}
  Suppose (a).  Since
  $\supp(\phi_{i_{\ell +1}}) = (\bigcup_{j}
  \suppA(w_{i_{\ell+1},j}),\bigcup_{j} \suppA(w_{i_{\ell+1},j}))$, we have
  $\suppA(\phi_{i_{\ell +1}}) \subseteq A_{p}$ and
  $\suppB(\phi_{i_{\ell +1}}) \subseteq B_{p}$. Since
  $(A_{p},B_{p}) \Less w_{p}$, it also follows that
  $\supp(\phi_{i_{\ell +1}}) \Less w_{p}$, so
  $\phi_{\supp(\phi_{i_{\ell+1}})} \leq \phi_{w_{p}}$ by
  Lemma~\ref{prop:Less}.  This shows that the position $p$ (as well as
  any of its extensions) is a win for Eve.  If (b) then $A_{p} = \A$
  and $B_{p} = \B$ so, in a similar way as before, we have
  $\phi_{\supp(\phi_{i})} \leq \phi_{w_{p}}$, this time for each
  $i \in I$.
\end{proof}

We can now state the main result of this section.
\begin{thm}
  \label{thm:mainRuitenburg}
  Let $\phi = \bigwedge_{i \in I} \phi_{i}$ where each $\phi_{i}$ is a
  disjunctive formula. Then $\rho(\phi) \leq (N+1)(M+1)$ where $N$ is
  the number of distinct head subformulas of $\phi$ and $M$ is the
  number of distinct side subformulas occurring in any of  the $\phi_{i}$.
\end{thm}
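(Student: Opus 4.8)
The theorem is the synthesis of the material developed in this section, so the plan is to chain Propositions~\ref{prop:candidateClosedFormula}, \ref{prop:winningToClosedRuit} and~\ref{prop:winningStrategy} after two innocuous reductions. The first reduction is the one already announced above: replacing in $\phi$ every head subformula and every side subformula by a fresh propositional variable yields a formula $\phi'$ and a substitution $\sigma$ \emph{not involving $x$} with $\phi = \phi'[\sigma]$; since the $\sigma$-images contain no occurrence of $x$, this substitution commutes with iterated substitution for $x$, so $\phi^{n} = (\phi')^{n}[\sigma]$ for every $n$, and hence $\rho(\phi) \leq \rho(\phi')$. We may therefore assume that the head subformulas of $\phi$ are precisely the variables of $\A = \set{\alpha_{1},\ldots ,\alpha_{N}}$ and the side subformulas are precisely those of $\B = \set{\beta_{1},\ldots ,\beta_{M}}$, with $\A$ and $\B$ disjoint and $x \notin \A \cup \B$; in particular each $\phi_{i}$ now lies in $\DisjAB$.

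For the second reduction I would replace each $\phi_{i}$ by its branch formula $\br(\phi_{i})$, which is a \Star formula --- a disjunction of word formulas --- satisfying $\br(\phi_{i}) \leq \phi_{i}$ and $\supp(\br(\phi_{i})) = \supp(\phi_{i})$; set $\phi^{\star} \eqdef \bigwedge_{i \in I} \br(\phi_{i})$ (after padding all the index sets to a common $[k]$, which is harmless since $\psi \eqIpc \psi \vee \phi_{\epsilon}$ for $\psi \in \DisjAB$). Applying Proposition~\ref{prop:winningStrategy} with $\phi^{\star}$ in the role of $\phi$ gives Eve a winning strategy in the game $\Gphi[(N+1)(M+1)]$, and Proposition~\ref{prop:winningToClosedRuit} then yields
\[
  \bigwedge_{i \in I} \phi_{\supp(\br(\phi_{i}))} \;\leq\; (\phi^{\star})^{(N+1)(M+1)}\,.
\]
Since $\supp(\br(\phi_{i})) = \supp(\phi_{i})$, the left-hand side is $\bigwedge_{i} \phi_{\supp(\phi_{i})}$ and the right-hand side is $(\bigwedge_{i} \br(\phi_{i}))^{(N+1)(M+1)}$, so this inequality is exactly the hypothesis of Proposition~\ref{prop:candidateClosedFormula} for the family $\set{\phi_{i}}_{i \in I}$ with $n = (N+1)(M+1)$.

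It then remains to close up. By Proposition~\ref{prop:candidateClosedFormula}(i) we have $\bigwedge_{i} \phi_{\supp(\phi_{i})} \leq (\bigwedge_{i} \phi_{i})^{n}$; conversely, since each $\phi_{i}$ is inflating and $\phi_{i} \leq \phi_{\supp(\phi_{i})}$ with $\phi_{\supp(\phi_{i})}$ idempotent, the chain of iterates of $\bigwedge_{i}\phi_{i}$ is increasing and $(\bigwedge_{i}\phi_{i})^{m} \leq \phi_{i}^{\,m} \leq \phi_{\supp(\phi_{i})}^{\,m} = \phi_{\supp(\phi_{i})}$ for each $i$ and each $m$, so $(\bigwedge_{i}\phi_{i})^{m} \leq \bigwedge_{i}\phi_{\supp(\phi_{i})}$. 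Squeezing these with the above gives $(\bigwedge_{i}\phi_{i})^{n} = (\bigwedge_{i}\phi_{i})^{n+1}$, whence $\rho(\phi) \leq n = (N+1)(M+1)$; moreover Proposition~\ref{prop:candidateClosedFormula}(ii) then says $\phi^{\rho(\phi)} = \bigwedge_{i}\phi_{\supp(\phi_{i})}$, the promised closed form for the stabilised iterate (so the bound for the original, pre-reduction $\phi$ follows from $\rho(\phi)\le\rho(\phi')$).

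As for the difficulty: the proof proper is essentially assembly, since everything substantial has been discharged beforehand --- into the normal-form computation of Proposition~\ref{prop:indexes}, into Proposition~\ref{prop:candidateClosedFormula}, and above all into the potential argument of Proposition~\ref{prop:winningStrategy}, where Eve's memory $(A_{p},B_{p})$ can strictly grow at most $N+1$ times in its first coordinate and, between two consecutive such growths, at most $M+1$ times in its second, which is exactly the source of the bound $(N+1)(M+1)$. The only points that still require a little care are the bookkeeping in the two reductions: checking that passing to a substitution instance cannot raise $\rho$, and that $\supp(\br(\phi_{i})) = \supp(\phi_{i})$, so that the bound obtained for $\phi^{\star}$ transfers back to $\phi$.
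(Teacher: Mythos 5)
Your proof is correct and follows essentially the same route as the paper's: reduce to the case where head and side subformulas are variables in $\A$ and $\B$, pass to the \Star formulas $\br(\phi_{i})$, and chain Propositions~\ref{prop:winningStrategy}, \ref{prop:winningToClosedRuit} and~\ref{prop:candidateClosedFormula}. The only difference is that you spell out details the paper leaves implicit (why substitution cannot increase $\rho$, and the squeeze giving $(\bigwedge_{i}\phi_{i})^{n}=(\bigwedge_{i}\phi_{i})^{n+1}$, which is already contained in the proof of Proposition~\ref{prop:candidateClosedFormula}).
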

\begin{proof}
  The statement holds iff and only if it holds when head and side
  subformulas of $\phi$ are propositional variables, that is, when
  $\phi_{i} \in \DisjAB$ for each $i \in I$, with $\card(\A) = N$ and
  $\card(\B) = M$. Moreover, according to
  Proposition~\ref{prop:candidateClosedFormula}, the statement of the
  theorem holds if
  $\bigwedge_{i \in I} \phi_{\supp(\phi_{i})} \leq \phi^{(N+1)(M+1)}$
  and under the additional assumption that each
  $\phi_{i}$ is a \Star formula.
  Now the relation
  $\bigwedge_{i \in I} \phi_{\supp(\phi_{i})} \leq \phi^{(N+1)(M+1)}$
  is a consequence of Proposition~\ref{prop:winningStrategy}, stating
  that Eve has a winning strategy in the game $\Gphi[(N+1)(M +1)]$,
  and of Proposition~\ref{prop:winningToClosedRuit}, relating such a
  winning strategy to the relation.
\end{proof}

\begin{remark}
  The upper bound given in Theorem~\ref{thm:mainRuitenburg} appears to
  be orthogonal to bound implicit in Ruitenburg's paper
  \cite{Ruitenburg84}. In the bound $\rho(\phi) \leq 2n +2$, the size
  $n$ of $\phi$ is at least the number of implication subformulas of
  $\phi$. Now, in a formula of the form $\bigwedge_{i \in I} \phi_{i}$
  with $\phi_{i} \in \DisjAB$, the number of implication subformulas
  might be exponentially larger than $N$ and $M$. Therefore the bound
  given in Theorem~\ref{thm:mainRuitenburg} is in this case tighter
  than Ruitenburg's bound.
  Conversely, we can derive from Theorem~\ref{thm:mainRuitenburg} a
  quadratic (in the size of the formula) upper bound for Ruitenburg's
  numbers of \stronglypositive formulas. This is achieved by
  considering that the size of a \stronglypositive formula is greater
  than the number of all the head and side subformulas in the
  conjuncts of its normal form (as in Lemma~\ref{lemma:CNF}).
  Ruitenburg's upper bound is in this case tighter.
\end{remark}

\begin{remark}
  The following example shows that the quadratic upper bound is
  necessary, at least with respect to finding a winning strategy for
  Eve.
  Let $\A \eqdef \set{\alpha_{1},\ldots ,\alpha_{N}}$ and
  $\B \eqdef \set{\beta_{1},\ldots ,\beta_{N}}$.  For each $k = 1,\ldots ,N$,
  let $P_{k}(\B)$ be the set of subsets of $\B$ if size $k$.
  Let $I = \set{(k,B) \mid B \in P_{k}(\B)}$ and, for each
  $(k,B) \in I$, consider the branch formula
  \begin{align*}
    \phi_{(k,B)} \eqdef & \bigvee_{\beta \in B}
    \phi_{(\set{\alpha_{k}},\set{\beta})}\,.
  \end{align*}
  Adam can use the following winning strategy in all the games
  $\Gphi[K]$ with $K < \frac{N(N-1)}{2}$.  He starts by choosing
  $(N,\B)$ until Eve has chosen at least $N-1$ different symbols from
  $\B$.  Let $\beta_{N}$ the only symbol not chosen by Eve. Then Adam
  chooses $(N-1,\B\setminus \set{\beta_{N}})$ and iterates this choice
  until Eve has chosen exactly $N-2$ different symbols. Let
  $\beta_{N -1}$ be the only symbol from
  $\B \setminus \set{\beta_{N}}$ which has not been chosen by Eve,
  then Adam chooses $(N-2,\B \setminus \set{\beta_{N},\beta_{N-1}}$,
  and so on.  Eve needs $N-1+(N-2)+(N-2)+ \ldots $ rounds to win.
  This example raises the question of the completeness of the game:
  does the existence of an Adam's winning strategy in $\Gphi$ implies
  that
  $\bigwedge_{i \in I} \phi_{\supp(\phi_{i})} \not \leq \phi^{K}$?
\end{remark}

\begin{remark}
  \label{rem:clstriclylessrho}
  We considered 
  \begin{align*}
    \phi_{n}(x) & \eqdef \bigvee_{i = 1,\ldots ,n} \alpha_{i} \impl (\beta_{i}
    \vee x)\,.
  \end{align*}
  and used \fCube \cite{fCube} to compute the values of
  $\cl(\phi_{n})$ and $\rho(\phi_{n})$. For $n \in \set{2,3,4,5}$, we
  obtained that $\cl(\phi_{n}) = \rho(\phi_{n}) = n +1$.  This raises
  the question whether there is any \fullypositive formula of the \Ipc
  for which we have $\cl(\phi_{n}) < \rho(\phi_{n})$.
\end{remark}

\section{A constant upper bound for disjunctions of \Atops}
\label{sec:almostclosure}

In this \Section we exemplify how investigating (lower bounds of)
closure ordinals might lead to uncover non-trivial properties of
\Ha{s}.
Example~\ref{ex:binaryjoin} illustrated the elimination procedure in
the case of \weaklynegative \fterms. It considered 
a \fterm of the form
\begin{align*}
  \phi(x) & \eqdef \bigvee_{i \in I} (x \impl b_{i}) \impl a_{i}\,,
\end{align*}
where the index set was a two element set.
In view of the similarity of these formulas with the disjunctive
formulas of \Section~\ref{sec:procedure}, we conjectured that closure
ordinals of formulas as the ones above increase as the size of $I$
becomes larger---so to exhibit tightness of the upper bound on closure
ordinals of \weaklynegative \fterms presented in
\Proposition~\ref{prop:convergesforwefs}. Yet, all our automatized
tests, for which we used the tool \fCube \cite{fCube}, pointed towards
the opposite direction. We finally managed to disprove the conjecture:
all these \fterms converge to their \lfp{s} in $3$ steps.

\bigskip

Let $H$ be a \Ha.  For $a,b \in H$, we call $j_{a,b}$ defined by
\begin{align*}
  j_{a,b}(x) & \eqdef (x \impl a) \impl b\,,
\end{align*}
an \emph{\Acop} (briefly, an \emph{\acop}).  The reason is the
following: when $a = b$, then $j_{a,a}$ is a closure operator (that
is, a monotone inflating idempotent function on $H$); more than that,
it is a \emph{Lawvere-Tierney topology} or \emph{nucleus}, see
\cite[Chapter II, section 2]{Johnstone82}, meaning that they are
strong: $x \land j_{a,a}(y) \leq j_{a,a}(x \land y)$, for each
$x,y \in H$.
We shall consider disjunctions of \acop{s}, for which we need a
convenient notation: for a family of pairs
$\Pi = \set{(a_{i},b_{i}) \mid i \in I}$, we shall write
\begin{align}
  \label{eq:defphipi}
  \phiPi(x) & \eqdef \bigvee_{i \in I} j_{a_{i},b_{i}}(x)\,.
\end{align}

\subsection{Elementary properties of \atop{s}}

\begin{lemma}
  The following holds, for each $x \in H$:
  \label{lemma:increasing}
  \label{cor:increasing}
  \label{lemma:condidempotent}
  \begin{enumerate}[(i)]
  \item $x \leq j_{a,b}(x)$ if and only if $x \leq a \impl b$. In
    particular, if $a \leq b$, then $x \leq j_{a,b}(x)$;
  \item $j_{a,b}^{2}(x) = j_{a,b}(x)$ if and only
    $j_{a,b}(b) \leq j_{a,b}(x)$.  In particular, this holds when $b \leq x$.
\end{enumerate}
Consequently, the restriction of $j_{a,b}$ to the interval
$[b,a\impl b]$ is a closure operator.
\end{lemma}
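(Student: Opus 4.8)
The plan is to establish (i) and (ii) by elementary residuation in $H$, the only non-routine input being that $j_{a,b}$, as a monotone polynomial, is \emph{strong} (Proposition~\ref{prop:Peirce}); the closing clause then follows by collecting the pieces.

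For (i), I would unfold $x \leq j_{a,b}(x) = (x \impl a) \impl b$ by residuation into $x \land (x\impl a) \leq b$, simplify the left-hand side to $x \land a$ using the Heyting identity $x \land (x\impl a) = x\land a$, and read $x \land a \leq b$ back as $x \leq a\impl b$. The sub-case $a \leq b$ is immediate, since then $a\impl b = \top$.

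For (ii) the first observation is that $j_{a,b}(x) \leq j_{a,b}^{2}(x)$ \emph{always} holds: since $x\impl a \geq a$ we have $j_{a,b}(x) = (x\impl a)\impl b \leq a\impl b$, so (i) applied with $j_{a,b}(x)$ in place of $x$ gives $j_{a,b}(x) \leq j_{a,b}(j_{a,b}(x))$; hence $j_{a,b}^{2}(x) = j_{a,b}(x)$ is equivalent to $j_{a,b}^{2}(x)\leq j_{a,b}(x)$. The direction $(\Rightarrow)$ is free: since $j_{a,b}(x) = (x\impl a)\impl b \geq b$, monotonicity of $j_{a,b}$ gives $j_{a,b}(b) \leq j_{a,b}(j_{a,b}(x)) = j_{a,b}^{2}(x) = j_{a,b}(x)$. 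For $(\Leftarrow)$ --- the one step I expect to be the crux --- I would set $c := x\impl a$, so that $j_{a,b}(x) = c\impl b$, and invoke strongness in the form \eqref{eq:strongimpl} with first argument $c$, obtaining $j_{a,b}^{2}(x) = j_{a,b}(c\impl b) \leq c \impl j_{a,b}(b)$. Under the hypothesis $j_{a,b}(b) \leq j_{a,b}(x) = c\impl b$, monotonicity of $c\impl(-)$ together with $c \impl (c\impl b) = c\impl b$ collapses this to $c\impl j_{a,b}(b) \leq c\impl(c\impl b) = c\impl b = j_{a,b}(x)$, whence $j_{a,b}^{2}(x)\leq j_{a,b}(x)$. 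The "in particular" case ($b\leq x$) then follows at once from monotonicity of $j_{a,b}$ and the equivalence just proved.

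For the final clause, I would note that $j_{a,b}$ sends every element of $H$ into $[b, a\impl b]$ --- the lower bound because $u\impl b \geq b$ for every $u$, the upper bound because $x\impl a \geq a$ --- so it restricts to a monotone endofunction of $[b,a\impl b]$, which is inflating there by (i) (every $x$ in the interval satisfies $x \leq a\impl b$) and idempotent there by the special case of (ii) (every $x$ in the interval satisfies $b\leq x$); that is exactly a closure operator. The main obstacle is simply spotting that $(\Leftarrow)$ of (ii) is delivered by \eqref{eq:strongimpl} applied with first argument $x\impl a$; once that is seen, everything else is routine Heyting-algebra manipulation.
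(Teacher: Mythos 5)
Your proof is correct. Part (i), the reduction of (ii) to $j_{a,b}^{2}(x)\leq j_{a,b}(x)$, and the assembly of the closure-operator claim all coincide with the paper's argument. The one place where you genuinely diverge is the core of (ii): the paper applies the compatibility equation~\eqref{eq:compatible} twice to obtain the exact identity $j_{a,b}^{2}(x)\land(x\impl a)=j_{a,b}(b)\land(x\impl a)$, and then both directions of the equivalence drop out simultaneously by residuating against $x\impl a$; you instead split the biconditional, getting $(\Rightarrow)$ from monotonicity of $j_{a,b}$ together with $b\leq j_{a,b}(x)$, and $(\Leftarrow)$ from the implicational form of strongness~\eqref{eq:strongimpl} applied with first argument $c=x\impl a$, followed by the collapse $c\impl(c\impl b)=c\impl b$. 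Since \eqref{eq:compatible} and \eqref{eq:strongimpl} are equivalent for monotone maps, the two arguments rest on the same underlying fact about the polynomial $j_{a,b}$; the paper's version is marginally more economical in delivering the iff in one computation, while yours has the advantage of making the easy direction visibly easy and isolating exactly where strongness is needed.
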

\begin{proof}
  (i) $x \leq (x \impl a) \impl b$ iff $x \land x \impl a \leq b $, iff
  $x \land a \leq b $, iff $x \leq a \impl b$.
  For the second statement, notice that if $a \leq b$, then
  $x\leq \top = a \impl b$.

  (ii) Notice firstly that the condition $j^{2}_{a,b}(x) = j_{a,b}(x)$
  is equivalent to $j^{2}_{a,b}(x) \leq j_{a,b}(x)$. As a matter of
  fact, $j_{a,b}(x) \leq a \impl b$ for each $x \in H$ so by (i) we
  always have $j_{a,b}(x) \leq j_{a,b}^{2}(x)$.

  Therefore we prove that  $j^{2}_{a,b}(x) \leq j_{a,b}(x)$ is
  equivalent to $j_{a,b}(b) \leq j_{a,b}(x)$.
  By repeated use of compatibility, we have the following equality:
  \pptiny
  \begin{align*}
    j^{2}_{a,b}(x) \land (x \impl a) = j_{a,b}((x \impl a) \impl b)
    \land (x \impl a) & = j_{a,b}(((x \impl a) \impl b) \land (x \impl
    a)) \land (x \impl a) \,,
    \\
    & = j_{a,b}(b \land (x \impl a))  \land (x
    \impl a)
    =  j_{a,b}(b)  \land (x
    \impl a)\,.
  \end{align*}
  \ppnormal
  It follows that
\ppfootnotesize
\begin{align*}
  j^{2}_{a,b}(x) \leq j_{a,b}(x) & \quad\tiff\quad j_{a,b}(b) \land (x \impl
  a) = j^{2}_{a,b}(x) \land (x \impl a) \leq b \quad \tiff \quad j_{a,b}(b)
  \leq j_{a,b}(x)\,.  
\end{align*}
\ppnormal
Finally, if $b \leq x$, then $j_{a,b}(b) \leq j_{a,b}(x)$ so
  $j_{a,b}^{2}(x) = j_{a,b}(x)$.
\end{proof}
Since $j_{a,b}(\bot) = b$, $j_{a,b}(\top) = a \impl b$, and $j_{a,b}$
is monotone, we also remark:
\begin{lemma}
  \label{lemma:topjab}
  The image of $H$ via $j_{a,b}$ is contained in the interval
  $[b,a\impl b]$.
\end{lemma}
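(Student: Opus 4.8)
The plan is to read off the result from monotonicity of $j_{a,b}$ together with the two boundary evaluations already recorded just before the statement. First I would note that $j_{a,b}(x) = (x \impl a) \impl b$ is monotone as a function of $x$: the inner term $x \impl a$ is antitone in $x$, and $(\cdot) \impl b$ is antitone in its first argument, so the composite $x \mapsto (x\impl a)\impl b$ is monotone; equivalently, one may simply observe that $x$ occurs only positively in $(x\impl a)\impl b$.

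Next, for an arbitrary $x \in H$ I would use $\bot \leq x \leq \top$ and apply this monotonicity to obtain $j_{a,b}(\bot) \leq j_{a,b}(x) \leq j_{a,b}(\top)$. Evaluating the endpoints from the Heyting algebra axioms gives $j_{a,b}(\bot) = (\bot \impl a)\impl b = \top \impl b = b$ and $j_{a,b}(\top) = (\top \impl a)\impl b = a \impl b$, so the chain reads $b \leq j_{a,b}(x) \leq a \impl b$, i.e. $j_{a,b}(x) \in [b, a\impl b]$. Since $x$ was arbitrary, the image of $H$ under $j_{a,b}$ lies in $[b, a\impl b]$, as claimed.

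There is no genuine obstacle here: the argument is a one-line monotonicity observation combined with two elementary computations. The only point deserving a moment of care is getting the direction of monotonicity right (the double occurrence of $\impl$ on the left makes $j_{a,b}$ monotone rather than antitone), but this is immediate and has in effect already been used elsewhere in the section, e.g. in Lemma~\ref{lemma:increasing}.
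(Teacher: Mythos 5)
Your argument is correct and is exactly the one the paper uses: the lemma is stated as an immediate remark following the observations that $j_{a,b}(\bot)=b$, $j_{a,b}(\top)=a\impl b$, and $j_{a,b}$ is monotone, which is precisely your chain $b=j_{a,b}(\bot)\leq j_{a,b}(x)\leq j_{a,b}(\top)=a\impl b$. Nothing further is needed.
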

Thus we have $j_{a,b}(x) = j_{a,b}(x) \land (a\impl b)$. We shall
exploit this fact many times, in conjunction with strongness.  The
following Lemma exemplifies this.

\begin{lemma}
  \label{lemma:guardedInflating}
 If $f : H \rto H$ is a strong monotone mapping, then
  $j_{a,b}(f(x)) \leq j_{a,b}(f(j_{a,b}(x)))$.
\end{lemma}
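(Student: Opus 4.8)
The plan is to exploit the identity recorded right after Lemma~\ref{lemma:topjab}, namely $j_{a,b}(y) = j_{a,b}(y)\land(a\impl b)$ for every $y\in H$, together with strongness. The key observation I would isolate first is that \emph{$j_{a,b}$ is itself a strong monotone function}, for arbitrary $a,b$ (not only when $a=b$). Monotonicity is clear, and for strongness in the form~\eqref{eq:strongconj} one checks $u\land j_{a,b}(v)\leq j_{a,b}(u\land v)$ directly: by currying, $(u\land v)\impl a = u\impl(v\impl a)$, so $j_{a,b}(u\land v) = (u\impl(v\impl a))\impl b$; residuating, it suffices to see $u\land((v\impl a)\impl b)\land(u\impl(v\impl a))\leq b$, and since $u\land(u\impl(v\impl a)) = u\land(v\impl a)$, the left-hand side is below $(v\impl a)\land((v\impl a)\impl b)\leq b$ by modus ponens. (One may instead verify the enriched form~\eqref{eq:strongenriched} for $j_{a,b}$ using transitivity of implication; the two are equivalent for monotone maps by the Remark in Section~\ref{sec:strongfunctions}.)

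Next I would record the elementary inequality $(a\impl b)\land x\leq j_{a,b}(x)$, which is just $x\land a\land(a\impl b)\leq b$ read through residuation. The lemma then follows from a short computation: the first equality below is the identity of Lemma~\ref{lemma:topjab}, the first inequality is strongness of $j_{a,b}$ applied to the arguments $a\impl b$ and $f(x)$, the second uses strongness of $f$ in the form~\eqref{eq:strongconj} (so $(a\impl b)\land f(x)\leq f((a\impl b)\land x)$) together with monotonicity of $j_{a,b}$, and the last uses $(a\impl b)\land x\leq j_{a,b}(x)$ together with monotonicity of $f$ and $j_{a,b}$:
\begin{align*}
  j_{a,b}(f(x)) & = (a\impl b)\land j_{a,b}(f(x)) \leq j_{a,b}((a\impl b)\land f(x)) \\
  & \leq j_{a,b}(f((a\impl b)\land x)) \leq j_{a,b}(f(j_{a,b}(x)))\,.
\end{align*}

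I do not anticipate a genuine obstacle; the one non-routine point is noticing that $j_{a,b}$ is strong for \emph{all} $a,b$, after which the statement reduces to stringing together this strongness, strongness of $f$, and the interval identity of Lemma~\ref{lemma:topjab}. If one prefers not to package $j_{a,b}$'s strongness as a separate fact, the same proof can be presented by unfolding only the instance actually needed, i.e.\ proving $(a\impl b)\land\bigl((f(x)\impl a)\impl b\bigr)\leq \bigl(((a\impl b)\land f(x))\impl a\bigr)\impl b$ by hand and then composing with~\eqref{eq:strongconj} for $f$; this is the identical computation, merely not factored.
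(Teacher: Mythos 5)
Your proof is correct and follows essentially the same route as the paper's: both multiply by $a \impl b$ (licensed by Lemma~\ref{lemma:topjab}), push it through the functions using strongness, and then absorb $(a \impl b) \land x$ into $j_{a,b}(x)$. The only cosmetic difference is that you verify strongness of $j_{a,b}$ by hand and apply \eqref{eq:strongconj} to $j_{a,b}$ and $f$ separately, whereas the paper invokes compatibility of the composite $j_{a,b}\circ f$; both facts already follow from Proposition~\ref{prop:Peirce} since $j_{a,b}$ is a polynomial, so your direct computation, while correct, is not strictly needed.
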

\begin{proof}
  We compute as follows:
  \begin{align*}
    j_{a,b}(f(x)) & = j_{a,b}(f(x)) \land (a \impl b)\,, \tag*{by
      Lemma~\ref{lemma:topjab},}
    \\
    & = j_{a,b}(f(x \land (a \impl b))) \land (a \impl b), \tag*{since
      $j_{a,b}\circ f$ is strong,}
    \\
    & \leq j_{a,b}(f(j_{a,b}(x \land (a \impl b)))) \land (a \impl
    b)\,, \tag*{using Lemma~\ref{lemma:increasing}.(i) and the fact that
      $x \land
      (a\impl b) \leq (a\impl b)$,} \\
    & = j_{a,b}(f(j_{a,b}(x)))\,,
  \end{align*}
  where in the last step we have again used Lemma~\ref{lemma:topjab}
  and that fact that $j_{a,b}\circ f$ is strong.
\end{proof}

To end this \Section, it is useful to pinpoint two identities that
shall be useful later. The first one is obtained by repeatedly using
compatibility of $j_{a,b}$:
\begin{align*}
  j_{a,b}(x) \land c & = j_{a\land c,b\land c}(x \land c) \land c =
  j_{a\land c,b\land c}(x) \land c\,.
\end{align*}
In particular, since  $j_{a,b}(x) =   j_{a,b}(x) \land (a \impl b)$,
we derive
\begin{align}
  \label{eq:alwaysInflating}
   j_{a,b}(x) & = j_{a\land b,b}(x) \land (a \impl b)\,.
\end{align}
The latter identity relates a general \atop to a specific \atop
$j_{a,b}$ with the property that $a \leq b$ which---according to
Lemma~\ref{lemma:increasing}.(i)---is always inflating.

\subsection{Closure of \pfp{s} of strong monotone mappings under
  exponentiation}

The following Lemma asserts that \pfp{s} of strong monotone mappings
are closed under exponentiation. This property seems to be the hidden
principal ingredient in the proof of the main result of this section,
Theorem~\ref{thm:atops}.
\begin{lemma}
  \label{lemma:strangelyclosed}
  Let $g : H \rto H$ be a strong monotone mapping.  If
  $c \in \Pref_{g}$, then $x \impl c \in \Pref_{g}$, for each
  $x,c \in H$.
\end{lemma}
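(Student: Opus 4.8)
The plan is to derive the claim directly from the characterization of strongness recorded in equation~\eqref{eq:strongimpl}, namely that a strong monotone map $g$ satisfies $g(a \impl b) \leq a \impl g(b)$ for all $a,b \in H$. First I would instantiate this inequality with $a = x$ and $b = c$, obtaining $g(x \impl c) \leq x \impl g(c)$. Next, since $c \in \Pref_{g}$ means precisely $g(c) \leq c$, and since the operation $x \impl (-)$ is monotone in its second argument, we get $x \impl g(c) \leq x \impl c$. Chaining the two inequalities yields $g(x \impl c) \leq x \impl c$, which is exactly the assertion that $x \impl c \in \Pref_{g}$.

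There is really no substantial obstacle here: the statement is an immediate consequence of strongness, and the only thing to be careful about is to invoke the correct one of the three equivalent formulations of strongness (equations \eqref{eq:strongconj}, \eqref{eq:strongimpl}, \eqref{eq:strongenriched}) — here \eqref{eq:strongimpl} is the convenient one, since prefixed points are expressed via $\impl$. I would write the proof as the two-step computation
\begin{align*}
  g(x \impl c) \leq x \impl g(c) \leq x \impl c\,,
\end{align*}
citing \eqref{eq:strongimpl} for the first inequality and monotonicity of $x \impl (-)$ together with $g(c)\leq c$ for the second, and conclude.
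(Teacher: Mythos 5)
Your proof is correct and is exactly the paper's argument: the paper likewise derives the claim from \eqref{eq:strongimpl} via the two-step chain $g(x \impl c) \leq x \impl g(c) \leq x \impl c$. No differences worth noting.
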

\begin{proof}
  The Lemma is an immediate consequence of
  equation~\eqref{eq:strongimpl}: $g(x \impl c)\leq x \impl g(c) \leq
  x \impl c$, when $c \in \Pref_{g}$.
\end{proof}

We shall study next when $j_{a,b}(x) = j_{a,c}(x)$.  Indeed, in view
of Lemma~\ref{lemma:strangelyclosed}, we shall have that $j_{a,b}(x)$
is a prefixed point of a strong $g$, if this equality holds and $c$ is
a \pfp of $g$.
\begin{lemma}
  \label{lemma:middle}
  We have $j_{a,b}(x) = j_{a,c}(x)$ if and only if
  $x \impl a \leq b \leftrightarrow c$.
  In particular, if $b \leq c \leq x \leq a \impl b$, then
  $j_{a,b}(x) = j_{a,c}(x)$.
\end{lemma}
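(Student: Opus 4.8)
The plan is to unfold the definition of $j_{a,b}$ and $j_{a,c}$ and reduce the claimed equality to a statement about the element $x\impl a$. First I would observe that both $j_{a,b}(x)=(x\impl a)\impl b$ and $j_{a,c}(x)=(x\impl a)\impl c$ are of the form $d\impl b$ and $d\impl c$ with $d=x\impl a$. So the lemma is really an instance of the general fact that, in a Heyting algebra, $d\impl b = d\impl c$ if and only if $d\leq b\leftrightarrow c$. I would prove this auxiliary equivalence directly: if $d\leq b\leftrightarrow c$, then $d\land b\leq c$ and $d\land c\leq b$, from which $d\impl b\leq d\impl c$ and $d\impl c\leq d\impl b$ follow by the adjunction $e\leq d\impl b \iff e\land d\leq b$; conversely, from $d\impl b = d\impl c$ one gets $d\land(d\impl b)\leq c$, i.e.\ $d\land b\leq c$ using $x\land(x\impl y)=x\land y$, and symmetrically $d\land c\leq b$, whence $d\leq (b\impl c)\land(c\impl b)=b\leftrightarrow c$.

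Then I would instantiate $d = x\impl a$ to conclude $j_{a,b}(x)=j_{a,c}(x)$ iff $x\impl a\leq b\leftrightarrow c$. For the ``in particular'' clause, assume $b\leq c\leq x\leq a\impl b$. Since $x\leq a\impl b$ we have $x\land a\leq b$, i.e.\ $x\impl a \geq $ ... more directly: from $x\leq a\impl b$ we get $x\impl a\leq x\impl(a\impl b) = (x\land a)\impl b$; but actually the cleanest route is to show $x\impl a\leq b\leftrightarrow c$ by showing $x\impl a\leq b\impl c$ and $x\impl a\leq c\impl b$. For $x\impl a\leq c\impl b$: since $c\leq x$, we have $(x\impl a)\land c\leq (x\impl a)\land x = x\land a$ (using $x\land(x\impl a)=x\land a$), and $x\land a\leq b$ because $x\leq a\impl b$; hence $(x\impl a)\land c\leq b$, i.e.\ $x\impl a\leq c\impl b$. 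For $x\impl a\leq b\impl c$: since $b\leq c$, we have $b\impl c=\top\geq x\impl a$. Combining, $x\impl a\leq (b\impl c)\land(c\impl b)=b\leftrightarrow c$, so the equality $j_{a,b}(x)=j_{a,c}(x)$ holds.

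This is essentially a routine Heyting-algebra computation, so I do not expect a genuine obstacle; the only mild care needed is getting the direction of the adjunction right and remembering the identities $x\land(x\impl y)=x\land y$ and $x\impl x=\top$ from the definition of a Heyting algebra. If one prefers, the whole thing can also be argued semantically via the replacement/soundness theorem: $b\leftrightarrow c$ sits in the interval above $x\impl a$ exactly when $b$ and $c$ become provably equivalent after adjoining $x\impl a$ as an axiom, and in that extension $(x\impl a)\impl b$ and $(x\impl a)\impl c$ coincide — but the direct algebraic proof above is shorter and self-contained.
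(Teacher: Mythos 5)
Your proposal is correct and follows essentially the same route as the paper's proof: both reduce the equality to the two inequalities $x\impl a\leq b\impl c$ and $x\impl a\leq c\impl b$ via the identity $d\land(d\impl b)=d\land b$ (you merely package this as a general statement about $d\impl b=d\impl c$ before instantiating $d=x\impl a$), and the verification of the ``in particular'' clause is the same computation. The brief false start involving $x\impl(a\impl b)$ is harmless since you discard it in favour of the correct argument.
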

\begin{proof}
  By symmetry, it will be enough to prove that
  $j_{a,b}(x) \leq j_{a,c}(x)$ if and only if
  $x \impl a \leq b \impl c$.

  Suppose that $x \impl a \leq b \impl c$. Then
  \begin{align*}
    ((x \impl a) \impl b) \land (x \impl a)
    & = b \land (x \impl a) \leq b \land (b \impl c)
    \leq c\,
  \end{align*}
  so $j_{a,b}(x) \leq j_{a,c}(x)$. 
  Conversely, suppose that $j_{a,b}(x) \leq j_{a,c}(x)$. Then
  \begin{align*}
     b \land (x \impl a) & =  ((x \impl a) \impl b) \land (x \impl a)
     \leq c \,,
  \end{align*}
  so $x \impl a \leq b \impl c$.

  For the last sentence, we can use the characterization we have just
  given.  Suppose $b \leq c \leq x \leq a \impl b$.  Then
  $x \impl a \leq \top = b \impl c$. Also
  $c \land x \impl a \leq x \land x \impl a = x \land a \leq a \impl b
  \land a \leq b$, so $x \impl a \leq c \impl b$.
\end{proof}

\begin{proposition}
  \label{prop:presprefixpoint}
  Let $g$ be a strong monotone mapping.  If
  $c \in \Pref_{g} \cap [f, e \impl f]$, then
  $j_{e,f}(x) \in \Pref_{g}$ for each $x \in [c, e \impl f]$.
\end{proposition}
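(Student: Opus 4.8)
The plan is to rewrite $j_{e,f}(x)$ as an \acop whose \emph{value} parameter is the prefixed point $c$ itself, and then to invoke the fact that $\Pref_g$ is closed under forming implications into a prefixed point.

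First I would extract from the hypotheses the chain of inequalities
\[
  f \;\leq\; c \;\leq\; x \;\leq\; e \impl f \,,
\]
where $f \leq c \leq e \impl f$ is $c \in [f, e\impl f]$ and $c \leq x \leq e \impl f$ is $x \in [c, e\impl f]$. This is exactly the configuration of the last sentence of Lemma~\ref{lemma:middle}, applied with $a := e$, $b := f$ and with $c$ itself as the ``middle'' element. Hence $j_{e,f}(x) = j_{e,c}(x)$.

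Next I would simply unfold $j_{e,c}(x) = (x \impl e) \impl c$ from the definition of an \acop. Since $c \in \Pref_g$ and $g$ is strong and monotone, Lemma~\ref{lemma:strangelyclosed} (used with the auxiliary element equal to $x \impl e$) gives $(x \impl e) \impl c \in \Pref_g$. Combining the two steps yields
\[
  j_{e,f}(x) \;=\; j_{e,c}(x) \;=\; (x \impl e) \impl c \;\in\; \Pref_g\,,
\]
which is the claim.

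I do not expect a genuine obstacle here: the entire content is the observation that on the interval $[c, e\impl f]$ the \acop{s} $j_{e,f}$ and $j_{e,c}$ coincide (by Lemma~\ref{lemma:middle}) together with the closure of $\Pref_g$ under $(-)\impl c$ for $c \in \Pref_g$ (by Lemma~\ref{lemma:strangelyclosed}). The only point requiring a little care is checking that the hypotheses $c \in [f, e\impl f]$ and $x \in [c, e\impl f]$ really do line up with the precise special case of Lemma~\ref{lemma:middle}, namely $f \leq c \leq x \leq e\impl f$ — which they do.
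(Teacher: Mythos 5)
Your proof is correct and follows exactly the paper's own argument: the chain $f \leq c \leq x \leq e \impl f$ lets you invoke the last sentence of Lemma~\ref{lemma:middle} to rewrite $j_{e,f}(x)$ as $j_{e,c}(x)$, and then Lemma~\ref{lemma:strangelyclosed} applied to the prefixed point $c$ gives $(x \impl e) \impl c \in \Pref_{g}$. There is nothing to add.
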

\begin{proof}
  By the previous Collary, we can write $j_{e,f}(x) = j_{e,c}(x)$.  It
  follows then from Lemma~\ref{lemma:strangelyclosed} that
  $j_{e,f}(x) = j_{e,c}(x) \in \Pref_{g}$.
\end{proof}

\subsection{Convergence in $3$ steps for disjunctions of \atop{s}}
\newcommand{\aibi}{(a_{i} \impl b_{i})}

Let therefore $\Pi = \set{(a_{i},b_{i}) \mid i \in I}$ be fixed;
in order to improve readability, let us
put, for each $i\in I$,
\begin{align*}
  j_{i}(x) & \eqdef j_{a_{i},b_{i}}(x)\,.
\end{align*}

\begin{thm}
  \label{thm:atops}
  The fuction $\phiPi$ defined as in equation~\eqref{eq:defphipi}
  converges to its \lfp in $3$ steps.
\end{thm}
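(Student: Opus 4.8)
The plan is to track the approximants $p_{n} := \phiPi^{n}(\bot)$ and prove $\phiPi^{4}(\bot)=\phiPi^{3}(\bot)$. Since $j_{i}(x)=(x\impl a_{i})\impl b_{i}\geq b_{i}$ by Lemma~\ref{lemma:topjab}, we have $\phiPi(x)\geq\bigvee_{i\in I}b_{i}$ for \emph{every} $x$; hence $p_{1}=\bigvee_{i}b_{i}$ and the sequence $(p_{n})_{n\geq 1}$ is non-decreasing, so it is enough to show $p_{4}\leq p_{3}$. As $\phiPi=\bigvee_{i}j_{i}$ is a join and $p_{2}\leq p_{3}$, monotonicity of each $j_{i}$ already gives $j_{i}(p_{2})\leq j_{i}(p_{3})$; thus it suffices to establish the \emph{reverse} inequalities $j_{i}(p_{3})\leq j_{i}(p_{2})$ for all $i$, after which $p_{4}=\bigvee_{i}j_{i}(p_{3})=\bigvee_{i}j_{i}(p_{2})=p_{3}$.

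I would reduce $j_{i}(p_{3})\leq j_{i}(p_{2})$ to the single key identity
\begin{align*}
  p_{2}\impl a_{i} \;=\; p_{3}\impl a_{i}\,,\qquad i\in I\,,
\end{align*}
since then $j_{i}(p_{3})=(p_{3}\impl a_{i})\impl b_{i}=(p_{2}\impl a_{i})\impl b_{i}=j_{i}(p_{2})$. One inclusion, $p_{3}\impl a_{i}\leq p_{2}\impl a_{i}$, is immediate from $p_{2}\leq p_{3}$. For the converse, which amounts to $p_{3}\wedge(p_{2}\impl a_{i})\leq a_{i}$, I unfold $p_{3}=\bigvee_{k\in I}j_{k}(p_{2})$ and prove $j_{k}(p_{2})\wedge(p_{2}\impl a_{i})\leq a_{i}$ for each $k$. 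The case $k=i$ is clean: by the Heyting law $x\wedge(x\impl y)=x\wedge y$ applied with $x=p_{2}\impl a_{i}$ and $y=b_{i}$ we get $j_{i}(p_{2})\wedge(p_{2}\impl a_{i})=b_{i}\wedge(p_{2}\impl a_{i})$, and since $b_{i}\leq p_{1}\leq p_{2}$ this is $\leq p_{2}\wedge(p_{2}\impl a_{i})=a_{i}\wedge p_{2}\leq a_{i}$.

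The heart of the proof is the case $k\neq i$, i.e.\ $j_{k}(p_{2})\wedge(p_{2}\impl a_{i})\leq a_{i}$. Here I would combine strongness of $j_{k}$ (Proposition~\ref{prop:Peirce}), which lets one push the constant inside via $(p_{2}\impl a_{i})\wedge j_{k}(p_{2})\leq j_{k}\bigl(a_{i}\wedge p_{2}\bigr)$, with the identity~\eqref{eq:alwaysInflating} (rewriting $j_{k}$ through its inflating representative $j_{a_{k}\wedge b_{k},\,b_{k}}$ together with the constant $a_{k}\impl b_{k}$), the compatibility fact $j_{k}(x)=j_{k}(x\wedge(a_{k}\impl b_{k}))$ coming from Lemma~\ref{lemma:topjab}, and the observation (Lemma~\ref{lemma:condidempotent}, using $p_{1}\geq b_{l}$ and $p_{1}=\bigvee_{m}b_{m}$) that each $j_{l}(p_{1})$ is already a $j_{l}$-fixed point. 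I expect this step to be the main obstacle: it is exactly where one must use that two applications of $\phiPi$ already saturate the finitely many data $p_{n}\impl a_{i}$, which is why the bound is the constant $3$ and not the $|I|+1$ produced by the generic \weaklynegative estimate of Proposition~\ref{prop:convergesforwefs}. Once $p_{2}\impl a_{i}=p_{3}\impl a_{i}$ is proved for all $i$, we conclude $p_{4}=p_{3}$, that is $\cl(\phiPi)\leq 3$, so $\mu_{x}.\phiPi=\phiPi^{3}(\bot)$.
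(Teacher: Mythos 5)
Your reductions are sound as far as they go: with $p_{n}=\phiPi^{n}(\bot)$, it does suffice to prove $j_{i}(p_{3})\leq j_{i}(p_{2})$ for each $i$, and your further reduction to $j_{k}(p_{2})\wedge(p_{2}\impl a_{i})\leq a_{i}$ for all $k,i$ would deliver this; the case $k=i$ is correctly dispatched. But the case $k\neq i$ is where the entire mathematical content of the theorem sits, and you do not prove it --- you list ingredients (strongness, \eqref{eq:alwaysInflating}, Lemma~\ref{lemma:topjab}, Lemma~\ref{lemma:condidempotent}) and declare the step ``the main obstacle''. As written this is a plan that stalls exactly at the point of difficulty, so it cannot be accepted as a proof.

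There are two further problems with the plan itself. First, your intermediate target $p_{2}\impl a_{i}=p_{3}\impl a_{i}$ is strictly stronger than what is needed: the theorem only requires $\bigvee_{i}j_{i}(p_{3})\leq\bigvee_{i}j_{i}(p_{2})$, and even the componentwise inequality $j_{i}(p_{3})\leq j_{i}(p_{2})$ (which the paper's proof does yield as a by-product of its final chain of inequalities) does not force equality of the antecedents $p_{3}\impl a_{i}$ and $p_{2}\impl a_{i}$, since $y\mapsto y\impl b_{i}$ is not injective. You offer no evidence that this stronger identity is intuitionistically valid, so the reduction may not be completable at all. Second, the ingredient that actually makes the argument work is missing from your toolbox: Lemma~\ref{lemma:strangelyclosed} (prefixed points of a strong monotone map are closed under $x\impl(-)$), used through Lemma~\ref{lemma:middle} and Proposition~\ref{prop:presprefixpoint} to show that $J_{k}(\phiPi(b))$, with $J_{k}=j_{a_{k}\wedge b_{k},b_{k}}$ and $b=p_{1}$, is a prefixed point of every $j_{i}$ (relation~\eqref{eq:ik}); Lemma~\ref{lemma:guardedInflating} and idempotency then convert this into $j_{k}(p_{3})\leq j_{k}(j_{k}(p_{2}))=j_{k}(p_{2})\leq p_{3}$. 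The paper explicitly identifies the exponentiation-closure lemma as the hidden principal ingredient; without it, or a genuine substitute, the compatibility manipulations you sketch for $k\neq i$ will not close the argument.
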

\begin{proof}
  We need to prove that
  $j_{k}(\phiPi^{3}(\bot)) \leq \phiPi^{3}(\bot)$, for each $k \in
  I$. If we put $b \eqdef \phiPi(\bot) = \bigvee_{i \in I} b_{i}$ then
  we need to show that
  \begin{align}
    \label{eq:maingoal}
    j_{k}(\phiPi^{2}(b)) & \leq \phiPi^{2}(b) \qquad\text{for each $k \in I$.}
  \end{align}
  Let, from now on, $k \in I$ be fixed and put
  \begin{align*}
    J_{k}(x) & \eqdef j_{a_{k} \land b_{k},b_{k}}(x)\,,
  \end{align*}
  so $j_{k}(x) = J_{k}(x) \land (a_{k} \impl b_{k})$ as from
  equation~\ref{eq:alwaysInflating}.
  We shall argue that, for
  each $i \in I$, the following relation holds:
  \begin{align}
    \label{eq:ik}
    j_{i}(J_{k}(\phiPi(b))) & \leq J_{k}(\phiPi(b))\,.
  \end{align}
  Once equation~\eqref{eq:ik} is proved, we
  prove~\eqref{eq:maingoal} 
  as follows:
  \begin{align*}
    j_{k}(\phiPi^{2}(b)) & = j_{k}(\,\bigvee_{i \in I}
    j_{i}(\phiPi(b))\,)
    \leq j_{k}(\,\bigvee_{i \in I} j_{i}(J_{k}(\phiPi(b)))\,)\,,
    \tag*{by Lemma~\ref{lemma:guardedInflating},}
    \\
    & \leq j_{k}(\,\bigvee_{i \in I} J_{k}(\phiPi(b))\,)
    = j_{k}( J_{k}(\phiPi(b)))
    \,,
    \tag*{using equation~\eqref{eq:ik},}
    \\
    & =  j_{k}( j_{k}(\phiPi(b)))
     \,,
    \tag*{since of $j_{k}(x) = j_{k}(x) \land (a_{k} \impl b_{k})$ and
      $j_{k}$ is strong,}
    \\
     &
    = j_{k}(\phiPi(b)) \,, \tag*{using
      $b_{k} \leq b \leq \phiPi(b)$ and Lemma~\ref{lemma:condidempotent}.(ii),}
    \\
    & \leq \phiPi^{2}(b) 
    \,.
  \end{align*}
  In order to prove that equation~\eqref{eq:ik} holds, we use
  Proposition~\ref{prop:presprefixpoint} and argue that a certain
  $j_{e,f}(x)$ is a \pfp of $j_{i}$.  Let, in the statement of the
  Proposition,
  \pptiny
  \begin{align*}
    e &\eqdef a_{k} \land b_{k} \land \aibi\,, 
    \quad f \eqdef b_{k} \land \aibi \,, 
    \quad c \eqdef j_{i}(b)\,, 
    \quad x \eqdef \phiPi(b) \land \aibi\,, 
    \quad
    g = j_{i}\,.
  \end{align*}
  \ppnormal To apply the Proposition, we need to verify that $(i)$
  $f \leq c \leq x \leq e \impl f$ and that $(ii)$ $c$ is a \pfp of
  $j_{i}$.
  \begin{myList}
  \item[$(i)$] We have
    $b \land a_{i} \impl b_{i} \leq a_{i} \impl b_{i}$ and therefore,
    by Lemma~\ref{lemma:increasing}.(i),
    \begin{align*}
      b \land \aibi & \leq j_{i}(b \land \aibi) = j_{i}(b) \land \aibi
      = j_{i}(b)\,.
    \end{align*}
    Using this relation, we see that
    \begin{align*}
      f = b_{k} \land \aibi & \leq b \land \aibi \\
      & \leq j_{i}(b) = c \\
      & \leq \phiPi(b) \land \aibi = x \\
      & \leq \top = e \impl f\,.
    \end{align*}
  \item[$(ii)$] From $b_{i} \leq b$ and
    Lemma~\ref{lemma:condidempotent}.(ii) it immediately follows that
    $c = j_{i}(b)$ is a \pfp of $j_{i}$.
  \end{myList}
  From $(i)$, $(ii)$ and Proposition~\ref{prop:presprefixpoint}, it follows that
  $j_{e,f}(x)$  is a \pfp
  of $j_{i}$. Recall now that
  \ppsmall
  \begin{align*}
    x & = \phiPi(b) \land \aibi\,,\\
    j_{e,f}(y) & = j_{a_{k} \land b_{k} \land \aibi, b_{k} \land
      \aibi}(c) \land \aibi = J_{k}(y) \land \aibi\,, \quad\text{for
      each $y \in H$\,.}
  \end{align*}
  \ppnormal
  We have therefore
  \begin{align*}
    j_{i}(J_{k}(\phiPi(b))& = j_{i}(J_{k}(\phiPi(b) \land
    \aibi) \land \aibi) \land \aibi
    \\
    & =  j_{i}(j_{e,f}(\phiPi(b) \land \aibi)) \\
    & \leq j_{e,f}(\phiPi(b) \land \aibi) \leq
    J_{k}(\phiPi(b))\,,
  \end{align*}
  proving relation~\eqref{eq:ik}.
\end{proof}

\begin{remark}
  The above upper bound is tight. Recall that $I$ is the index set
  over the disjunction by which $\phiPi$ is defined, see
  \eqref{eq:defphipi}, so $\card(I)$ is the number of \atop{s} being
  joined.
  Computations with \fCube \cite{fCube} show that $\cl(\phiPi) = 2$
  when $\card(I) = 1$, and that $\cl(\phiPi) = 3$ when
  $\card(I) \in \set{2,3,4,5}$. Quite interestingly we obtained the
  same pattern for Ruitenburg's numbers: $\rho(\phiPi) = \cl(\phiPi)$
  when $\card(I) \in \set{2,3,4,5}$.  This raises the question whether
  the results presented in this section can be lifted to Ruitenburg's
  number; more generally and also considering
  Remark~\ref{rem:clstriclylessrho}, the question whether there is any
  formula $\phi \in \FIpc$ for which $\cl(\phi) < \rho(\phi)$ is open.
\end{remark}

\ifPreprint{
  \bibliographystyle{abbrv}
}{
  \bibliographystyle{ACM-Reference-Format}
}
\bibliography{biblio}

\end{document}